\pgfplotsset{compat=1.18}
\newtheorem{theorem}{Theorem}[section]
\newtheorem{lemma}[theorem]{Lemma}
\newtheorem{proposition}[theorem]{Proposition}
\newtheorem{remark}[theorem]{Remark}
\theoremstyle{definition}
\newtheorem{example}[theorem]{Example}
\newtheorem*{theorem**}{Theorem\theoremnum}
\newenvironment{theorem*}[1][]{%
  \edef\theoremnum{\if\relax\detokenize{#1}\relax\else~#1\fi}
  \begin{theorem**}
}{%
  \end{theorem**}
}
\newcommand{\fa}{\mathbf{a}}
\newcommand{\fb}{\mathbf{b}}
\newcommand{\fx}{\mathbf{x}}
\newcommand{\PP}{\mathbb{P}}
\newcommand{\RR}{\mathbb{R}}
\DeclareMathOperator{\Span}{Span}
\DeclareMathOperator{\rank}{rank}
\DeclareMathOperator{\Vect}{Vect}
\DeclareMathOperator{\Mat}{Mat}
\newcommand{\T}{^\mathsf{T}}
\newcommand{\bx}{\mathbf{x}}
\newcommand{\by}{\mathbf{y}}
\newcommand{\bz}{\mathbf{z}}
\newcommand{\fc}{\mathbf{c}}
\newcommand{\bs}{\mathbf{s}}
\begin{document}

\title[Contrastive Independent Component Analysis]{Contrastive independent component analysis}



\author{Kexin Wang}
\address{Harvard University, Pierce Hall, 29 Oxford Street, Cambridge, MA 02138, USA}
\email{kexin\_wang@g.harvard.edu}

\author{Aida Maraj}
\address{Max Planck Institute of Molecular Cell Biology and Genetics and Center for Systems Biology, Dresden, Germany}
\email{maraj@mpi-cbg.de}

\author{Anna Seigal}
\address{Harvard University, Pierce Hall, 29 Oxford Street, Cambridge, MA 02138, USA}
\email{aseigal@seas.harvard.edu} 



\keywords{Independent component analysis, Tensor decomposition, Contrastive methods}

\begin{abstract}
In recent years, there has been growing interest in jointly analyzing a foreground dataset, representing an experimental group, and a background dataset, representing a control group. 
The goal of such contrastive investigations is to identify salient features in the experimental group relative to the control.
Independent component analysis (ICA) is a powerful tool for learning independent patterns in a dataset.
We generalize it to contrastive ICA (cICA).
For this purpose, we devise a new linear algebra based tensor decomposition algorithm, which is more expressive but just as efficient and identifiable as other linear algebra based algorithms.
We establish the identifiability of cICA and demonstrate its performance in finding patterns and visualizing data, using synthetic, semi-synthetic, and real-world datasets, comparing the approach to existing methods.
\end{abstract}

\maketitle

\section{Introduction}

Finding and understanding patterns in data is fundamental in various scientific fields. 
Often, data have been collected under two different settings, such as a group of patients receiving treatment and a control group, or a group of patients with a certain disease and a group without the disease. 
The goal is to understand the effect of the treatment or to understand the genetic changes that describe the disease. 
While standard data analysis methods can be used, which restrict attention to one of the datasets or combine them together, an alternate view is offered by contrastive methods. Contrastive methods view the two settings as a foreground and a background. They seek to learn patterns in the foreground after accounting for (or, ``subtracting off") the background. The hope is that such patterns encode useful structures and offer a good basis for dimensionality reduction and visualization of the data, to identify fine-grained structures and clusters particular to the foreground.

Back in the 1980s, Flury initiated the idea of comparing covariance matrices and finding principal components across multiple datasets~\cite{flury1983some,flury1984common,flury1987two}. The contrastive viewpoint was then addressed and formalized in \cite{zou2013contrastive}, where the authors discussed contrastive topic modeling and contrastive hidden Markov models.
Principal component analysis (PCA) was generalized to
contrastive PCA (cPCA) in~\cite{abid2017contrastive,abid2018exploring}.
A latent variable model perspective is taken in \cite{li2020probabilistic,severson2019unsupervised}.
The present work extends such methods, specifically cPCA, to a more expressive and identifiable setting. Specifically, it removes simplifying assumptions that amount of each background signal present in the foreground is the same \cite{zou2013contrastive,abid2017contrastive,abid2018exploring,severson2019unsupervised,li2020probabilistic}, that the latent variables are Gaussians \cite{abid2017contrastive,abid2018exploring,severson2019unsupervised,li2020probabilistic}, and that the salient patterns in the foreground data are orthogonal \cite{abid2017contrastive,abid2018exploring}. 
The greater expressivity and identifiability are achieved using the higher-order cumulant tensors of the foreground and background data, which encode more fine-grained structure than the covariance.

We call the method contrastive independent component analysis (cICA). 
 Independent component analysis (ICA) is a blind source separation method, which seeks to recover latent sources and unknown mixing from observations of mixtures of signals~\cite{comon2010handbook}.
 ICA assumes that latent sources are independent. In extending ICA to the contrastive setting, the idea is that background data is generated by mixing of independent sources while foreground data is generated by the background mixing together with a foreground mixing of independent sources.

 We show using connections to classical algebraic geometry that cICA has strong identifiability properties. 
This enables the contribution of each background pattern to the foreground to be found uniquely, avoiding the need for a sweep of hyperparameters to find the best multiple of the background to subtract from the foreground and avoids the assumption that the background contribution to the foreground is via a single scalar multiple, both of which are required in~\cite{zou2013contrastive,abid2017contrastive,abid2018exploring,li2020probabilistic}. 

To implement cICA, we devise a new hierarchical tensor decomposition based on recursive eigendecompositions. 
 The decomposition encourages (rather than imposes) orthogonality between the rank one summands.
We show that it recovers accurate patterns for synthetic data. 
  We turn cICA into a dimensionality reduction tool and investigate its performance on real-world data, comparing the plots to those obtained with other contrastive methods to see its competitiveness.

The paper is organized as follows. 
We define cICA in Section \ref{sec:cica}. 
We introduce the new hierarchical tensor decomposition in Section \ref{sec: HTD}.
We study identifiability and algorithms for cICA in Section~\ref{sec:id ang alg}.
Numerical results are in Section~\ref{sec:numerical result}.

\section{From ICA to contrastive ICA}\label{sec:cica}

Blind source separation seeks to recover latent sources and unknown mixing from observations of mixtures of signals~\cite{comon2010handbook}. A special case is independent component analysis (ICA),
which assumes that the latent sources are independent.
ICA was introduced in 1985 \cite{ans1985architectures} and popularized by Comon in his paper \cite{comon1994independent}.

ICA can be viewed as a generalization of PCA, where instead of finding uncorrelated components, it goes a step further by aiming to make the components statistically independent and instead of decomposing second-order information (covariance matrices), it decomposes higher-order statistics (via the cumulant tensors).

ICA studies observations that are a linear mixture of independent source variables. 
Applications include recovering speech and
brain signals~\cite{bartlett2002face,jung2001imaging}, causal discovery~\cite{shimizu2006linear}, and image decomposition~\cite{hyvarinen1999fast}.
We write the ICA  model as 
\begin{equation}
    \label{eqn:usual_ica}
    \by = A \bz,
\end{equation}
where $\bz$ is a vector of $r$ independent latent random variables, the mixing matrix is $A \in \RR^{p \times r}$, and $\by$ is a vector of $p$ observed variables. The $i$-th column of $A$ records a pattern in the data: the contribution of variable $z_i$ to each of the $p$ observed variables. 
The identifiability of ICA refers to the uniqueness of the mixing matrix $A$ and sometimes also of the variables $\bz$; see~\cite{1306473,comon1994independent,wang2024identifiability}.

Many algorithms for ICA proceed via tensor decomposition, see e.g.~\cite{comon2010handbook,cardoso1993blind,de2001independent,de2007fourth}. 
The cumulants of a distribution are symmetric tensors that encode it.
The $d$-th cumulant $\kappa_d(\by)$ of $\by$ is a symmetric order $d$ tensor of format $p \times \cdots \times p$ 
whose entry at position $(j_1, \ldots, j_d)$ is
\begin{equation}
 \label{eqn:ICA}
     \sum_{i=1}^r \lambda_i (\fa_i)_{j_1} \cdots (\fa_i)_{j_d},
 \end{equation}
 where the scalar $\lambda_i$ is the $d$-th cumulant of $z_i$ and the vector $\fa_i \in \RR^p$ is the $i$-th column of $A$.
 We denote this by
 \begin{equation}
 \label{eqn:ICA1}
     \kappa_{d}(\by)=\sum_{i=1}^r \lambda_i \fa_i^{\otimes d}.
 \end{equation}
This decomposition~\eqref{eqn:ICA1} follows from the multi-linear properties of cumulants and the fact that cumulant tensors of independent variables are diagonal, see~\cite[Chapter 2]{mccullagh2018tensor}.
The matrix $A$ can be recovered using tensor decomposition of the cumulant tensor~\eqref{eqn:ICA}. If the tensor decomposition is identifiable, then the columns $\fa_i$ with $\lambda_i \neq 0$ can be recovered uniquely up to permutation and scaling of columns.  Thus tensor decomposition of higher-order cumulant tensors gives an algorithm for ICA, provided no source is Gaussian (this is required for non-zero higher-order cumulants).

In this paper, we extend ICA, and tensor decomposition for ICA, to the comparison of two distributions. 
We call this contrastive ICA (cICA), 
by analogy with cPCA~\cite{abid2018exploring}. 
We have two observed distributions, a foreground, and a background.
Both are assumed to be linear mixtures of independent source variables. Our cICA model expresses the background $\by$ and foreground $\bx$ as 
\begin{equation}
\label{eqn:cica}
    \by = A \mathbf{z}\qquad \text{and} \qquad \bx = A \mathbf{z}' + B \mathbf{s}. 
\end{equation}
The background distribution $\by$ is a linear mixture of a random vector $\bz$ of $r$ independent random variables, as in~\eqref{eqn:usual_ica}. 
The foreground $\bx$ is a mixture of $r + \ell$ independent variables $\bz' = (z_1', \ldots, z_r')$ and $\bs = (s_1, \ldots, s_\ell)$. 
The columns of $A$ are the patterns in the background: column $\fa_i \in \RR^p$ records how source variable $z_i$ appears among the $p$ background variables as well as how source variable $z_i'$ appears among the $p$ foreground variables. The columns of $B$ are patterns that appear only in the foreground. 
They correspond to the variables $s_i$, referred to as the salient variables in~\cite{abid2019contrastive}.

We propose a tensor decomposition algorithm to recover mixing matrices $A$ and $B$ from~\eqref{eqn:cica}. These matrices record the patterns that encode our background and foreground distributions. We apply the algorithm to empirical cumulant tensors of $\bx$ and $\by$ obtained from sample data. We order the columns of matrix $B$ to obtain a dimensionality reduction tool.
We work under the assumption that $\bz,\bz^\prime, \bs$ are non-Gaussian, an assumption that also appears for usual ICA. This can likely be relaxed to that at most one source is Gaussian, cf.~\cite{comon1994independent,wang2024identifiability}.

Under the model~\eqref{eqn:cica}, the $d$-th cumulants of the background and foreground data are, respectively,
\begin{equation}
    \label{eqn:tensor_decomp}
    \kappa_{d}(\by)=\sum_{i=1}^r \lambda_i \fa_i^{\otimes d}, \qquad \quad \kappa_{d}(\bx)=\sum_{i=1}^r \lambda_i' \fa_i^{\otimes d}+\sum_{j=1}^\ell \nu_j \fb_j^{\otimes d},
\end{equation}
where $\lambda_i$ is the $d$-th cumulant of $z_i$, $\lambda_i'$ is the $d$-th cumulant of $z_i'$, and $\nu_j$ is the $d$-th cumulant of~$s_j$. 
This follows from the multilinearity of cumulants and that cumulant tensors of independent sources are diagonal, as for usual ICA.
See Figure \ref{fig: decompose foreground cumulant} for an illustration of $\kappa_3(\bx)=\sum_{i=1}^r \lambda_i' \fa_i^{\otimes d}+\sum_{j=1}^\ell \nu_j \fb_j^{\otimes d}$ when $d=3$.

\begin{figure}[htbp]
\centering
\includegraphics[scale = 0.4]{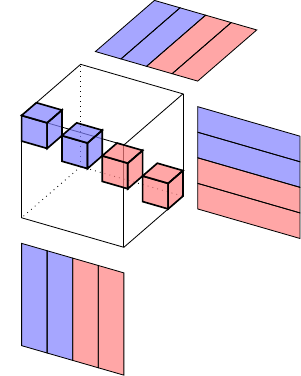}
\caption{Tensor decomposition for $\kappa_3(\bx)=\sum_{i=1}^r \lambda_i' \fa_i^{\otimes d}+\sum_{j=1}^\ell \nu_j \fb_j^{\otimes d}$ when $d=3$ and $r = \ell = 2$.
The central $4 \times 4 \times 4$ diagonal tensor is multiplied along each index by a matrix with four columns, whose first two columns (blue) are the background patterns and second two (red) are the foreground patterns. }
\label{fig: decompose foreground cumulant}
\end{figure}

To recover $A$ and $B$, we compute a joint decomposition of the cumulant tensors $\kappa_d(\by)$ and $\kappa_d(\bx)$~\eqref{eqn:tensor_decomp}, via three steps:
\begin{enumerate}
    \item Compute a symmetric tensor decomposition of $\kappa_d(\by)$ to learn $A$.
    \item  Find the coefficients $\lambda_i'$ of each $\fa_i^{\otimes d}$ in $\kappa_d(\bx)$ to obtain $\sum_{j=1}^\ell \nu_j \fb_j^{\otimes d}$.
    \item Compute a symmetric tensor decomposition of $\sum_{j=1}^\ell \nu_j \fb_j^{\otimes d}$ to learn $B$.
\end{enumerate}

We work with the fourth order cumulants $d=4$, since the tensor decomposition we use works better for an even order symmetric tensor. 
For the third step of our approach, we require a tensor decomposition method that is efficient and promotes orthogonality among the rank-1 components, which aids interpretability and improves visualizations.
To address this, we propose a hierarchical eigendecomposition based algorithm, which we describe in more detail in the next section. 
The algorithm uses linear algebra and can handle tensors of rank up to $p^2$ (compared to rank $p$ for other linear algebra-based methods \cite{harshman1970foundations,kolda2015symmetric}).

\subsection{Related Work}

We relate cICA to other contrastive models.
In cPCA, the contrastive patterns are principal components of the foreground covariance matrix minus a scalar multiple of the background covariance matrix~\cite{abid2017contrastive,abid2018exploring}.  
We can specialize cICA to cPCA by setting $\mathbf{z}'=\gamma \mathbf{z}$ and studying observed distributions $\bx$ and $\by$ via their covariance matrices ($d=2$).   
Probabilistic contrastive PCA (PCPCA) is introduced in \cite{li2020probabilistic}, where foreground patterns are inferred by maximizing a likelihood ratio of linear Gaussian mixtures. 
Contrastive ICA also relates to PCPCA~\cite{li2020probabilistic} but we do not impose distributional assumptions, beyond independence and non-Gaussianity, on the variables $\mathbf{z}$ and $(\mathbf{z}', \mathbf{s})$.
The paper \cite{severson2019unsupervised} studies a linear contrastive latent variable model. 
The contrastive ICA model aligns with the framework of the contrastive latent variable model proposed in~\cite{severson2019unsupervised}, but it does not assume any relationship between $\mathbf{z}$ and $\mathbf{z}'$ while the contrastive latent variable model assumes $\mathbf{z}=\mathbf{z}'$.

The setting of cICA relates to usual ICA, with block structure on the mixing matrix:
\[
\begin{aligned}
&\text{if $\bz'\!$, $\bz$, $\bs$ are independent,} \quad 
\begin{pmatrix}
    \bx \\ \by
\end{pmatrix} = 
\begin{pmatrix}
    0 & A & B \\
    A & 0 & 0 
\end{pmatrix}
\begin{pmatrix}
    \bz\phantom{'} \\ \bz' \\ \bs\phantom{'}
\end{pmatrix}; \\ 
&\text{if $\bz' = \gamma \bz$,} \quad
\begin{pmatrix}
    \bx \\ \by
\end{pmatrix} = 
\begin{pmatrix}
    \gamma A & B \\
    A & 0 
\end{pmatrix}
\begin{pmatrix}
    \bz \\ \bs
\end{pmatrix}.
\end{aligned}
\]
Identifiability can be characterized using~\cite{comon1994independent}, or using~\cite{1306473,wang2024identifiability} if the model is overcomplete (i.e. the number of sources exceeds the number of observations, which occurs for $2r + \ell > 2p$).
However, learning parameters via usual ICA requires access to the joint distribution of $(\bx, \by)$, 
which is generally unavailable because the data from the two datasets are unpaired. 
For example, single-cell RNA data for patients with a disease (foreground) and a control group (background), has each person assigned to either the foreground set or the background.

In~\cite{sturma2024unpaired}, the authors study multi-modal linear ICA. They recover the mixing matrices from each mode via usual linear ICA and use a hypothesis test to decide which latent variables are shared across modes. 
Our method differs from this as we seek patterns unique to the foreground rather than shared patterns.

Nonlinear contrastive methods have been explored in the literature.
Nonlinear ICA is studied using contrastive learning~\cite{hyvarinen2016unsupervised,hyvarinen2019nonlinear,lyu2022finite}. 
Here contrastive is used in a different context: it describes a method to train a network to distinguish two datasets. 
A nonlinear contrastive method called a contrastive variational autoencoder (cVAE) is introduced in~\cite{abid2019contrastive,severson2019unsupervised}.
The paper \cite{weinberger2022moment} presents a method for cVAE using maximum mean discrepancy to prevent leakage of information between the two sets of latent variables.
Identifiability of cVAE is studied using connections to nonlinear ICA in~\cite{lopez2024toward}. These works produce a nonlinear latent encoding of data, whereas our focus is on linear pattern vectors.

\section{Hierarchical tensor decomposition}\label{sec: HTD}

ICA has seen limited application in data visualization, one notable exception being~\cite{lim2008cumulant}. 
Existing algorithms to compute a symmetric tensor decomposition usually have randomness due to initialization and the details of the optimization process, such as the step size in gradient-based optimization. Another challenge is that the resulting vectors may be nearly parallel~\cite{landsberg2011tensors}, which yields a suboptimal basis for projecting the data and 
hinders its interpretability.
We overcome these difficulties with our proposed hierarchical tensor decomposition (HTD). Its output is deterministic and the components learned are almost orthogonal.

HTD decomposes an order four tensor via recursive eigendecompositions. 
The idea is to find a low-rank approximation of a tensor, whose rank one summands offer an interpretable basis on which to project data.
Later, we use the decomposition for cICA. In this section, we define the decomposition and study its properties. 
HTD for a tensor in $(\RR^{p})^{\otimes 4}$ uses linear structure in the space $(\RR^{p})^{\otimes 2}$ rather than $\RR^{p}$, so it handles tensors of rank up to $p^2$ (unlike $p$ in other linear algebra-based methods \cite{harshman1970foundations,kolda2015symmetric}).
The detailed comparison with other tensor decomposition methods is in Section 1 of the Appendix.

\subsection{The HTD algorithm}

Consider a symmetric tensor $T$ of format $p \times p \times p \times p$. We compute a rank $r$ approximation,
\begin{equation}
    \label{eqn:Tapprox}
    T \approx \sum_{i=1}^r \nu_i \fb_i^{\otimes 4}, 
\end{equation} 
as follows. 
Let $\Mat(T)$ be the flattening of $T$ that rearranges its $p^4$ entries into a matrix of size $p^2 \times p^2$.
The entries of $\Mat(T)$ are indexed $((i_1, i_2), (j_1, j_2))$, where $i_1, i_2, j_1, j_2 \in [p] :=\{1,\ldots,p\}$.
We compute the approximation~\eqref{eqn:Tapprox} by first computing the eigendecomposition of $\Mat(T)$, whose eigenvectors lie in $\RR^{p^2}$, and then by reshaping these eigenvectors into $p \times p$ matrices and computing their top eigenvalue and corresponding eigenvector.  By top eigenvalue we mean those of highest magnitude. This decomposition has not to our knowledge been studied before but has connections to the hierarchical tensor representations of~\cite[Chapter 11]{hackbusch2012tensor} and the PARATREE model in~\cite{salmi2009sequential}, see Section \ref{app:comparison} of the Appendix. 
See Figure \ref{fig:HTD} for an illustration of the steps of HTD on a $2\times 2 \times 2 \times 2$ tensor. Here is the HTD algorithm.

\begin{algorithm}[htbp]
\caption{Compute unit vectors $\fb_1,\ldots,\fb_r$ such that $T \approx \sum_{i=1}^r \nu_i \fb_i^{\otimes 4}$} \label{alg:hierarchical}
\begin{algorithmic}[1]
\renewcommand{\algorithmicrequire}{\textbf{Input:}}
\Require Symmetric tensor $T$ of format $p \times p \times p \times p$ and rank~$r$.
\State Compute the eigendecomposition of the $p^2 \times p^2$ flattening $\Mat(T)$. Take the top
$r$ 
eigenvalues $\mu_1,\ldots,\mu_r$, with corresponding 
eigenvectors $\mathbf{v}_1,\ldots,\mathbf{v}_r \in \RR^{p^2}$ of unit length.
\State For each $i \in [r]$, reshape $\mathbf{v}_i \in \RR^{p^2}$ to $M_i \in \RR^{p \times p}$. 
\State For each $M_i$, find the top eigenvalue $\beta_i$ and a corresponding unit length eigenvector $\fb_i \in \RR^p$. 
\renewcommand{\algorithmicrequire}{\textbf{Output:}}
\Require Rank $r$ decomposition $\sum_{i=1}^r (\mu_i \beta_i^2 )\mathbf{b}_i^{\otimes 4}$.
\end{algorithmic}
\end{algorithm}

\begin{figure}[htbp]
    \centering
    \includegraphics[scale = 0.3]{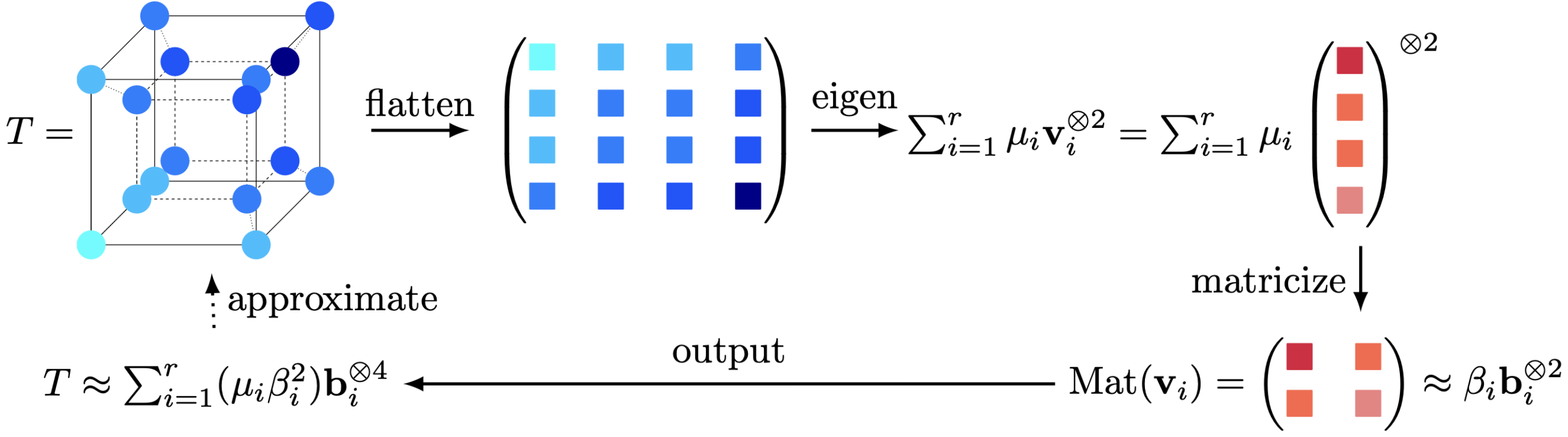}
    \caption{Steps in the HTD algorithm: input tensor $T$, matrix flattening $\Mat(T)$, best rank $r$ approximation $\Mat(T)\approx \sum_{i=1}^r \mu_i \mathbf{v}_i^{\otimes 2}$, best rank one approximation of each $\Mat(\mathbf{v}_i)$ and the output rank $r$ approximation for $T$.}
    \label{fig:HTD}
\end{figure}

We record some observations about Algorithm~\ref{alg:hierarchical}. The matrix $\Mat(T) \in \RR^{p^2 \times p^2}$ is symmetric since $T$ is symmetric. The matrices $M_1, \ldots, M_r \in \RR^{p \times p}$ are also symmetric, because the vectors $\mathbf{v}_1, \ldots, \mathbf{v}_r$ are in the column space of $\Mat(T)$, whose $(i_1, i_2)$-th row equals its $(i_2, i_1)$-th row. Although the output vectors $\fb_i$ are in general not orthogonal, as each is an eigenvector of a distinct matrix, they can be nearly orthogonal in practice, see Section~\ref{sec:properties_of_HTD}. This is because they are the leading eigenvectors of matrices that have been reshaped from orthogonal vectors $\mathbf{v}_i$.

\begin{example}[$2\times 2 \times 2\times 2$ example]
\label{ex:2x2x2x2}
Let $r = 2$. Fix \[ T=2\begin{bmatrix}
       1 \\0
    \end{bmatrix}^{\otimes 4}+\begin{bmatrix}
        0.0998\\0.995
    \end{bmatrix}^{\otimes 4}.\]  
    Then \[\mathrm{Mat}(T)=\begin{bmatrix} 2.0001 & 0.0010 & 0.0010 & 0.0099 \\  0.0010 & 0.0099 & 0.0099 & 0.0983 \\  0.0010 & 0.0099 & 0.0099 & 0.0983 \\  0.0099 & 0.0983 & 0.0983 & 0.9801 \end{bmatrix}\]
 with eigenvalues $\mu_1=2.00019, \mu_2=0.99977$ and associated eigenvectors
\[
\begin{aligned}
\mathbf{v}_1\T & \approx \begin{bmatrix}
   0.99995 & 0.00098 & 0.00098 & 0.00985
\end{bmatrix}, \\ 
\mathbf{v}_2\T & \approx \begin{bmatrix}
-0.00995 & 0.0993 & 0.0993 & 0.99003
\end{bmatrix}.
\end{aligned}
\]
Their corresponding matrices $M_1, M_2 \in \RR^{2 \times 2}$ are symmetric with top eigenvalues $\beta_1=0.99995$ and $\beta_2=0.9998$, respectively, with eigenvectors $\fb_1\T =\begin{bmatrix}
    0.99999 & 0.00099
\end{bmatrix}$ and
$\fb_2\T =\begin{bmatrix}
   0.09787 &  0.99519
\end{bmatrix}$. 
The HTD algorithm with input $T$ and $r=2$ thus outputs 
\begin{equation} \sum_{i=1}^2(\mu_i\beta_i^2)\fb_i^{\otimes 4} = 1.99999\begin{bmatrix}
      0.99999 \\ 0.00099
\end{bmatrix}^{\otimes 4} + 0.99937\begin{bmatrix}
  0.09787 \\  0.99519
\end{bmatrix}^{\otimes 4}.
\end{equation}
We note the similarity to the input tensor $T$.
\end{example}

\subsection{Properties of the decomposition}
\label{sec:properties_of_HTD}

The HTD algorithm  outputs a rank $r$ approximation of a tensor. 
In certain cases, the output closely approximates the input tensor, as in Example~\ref{ex:2x2x2x2}.
We bound the distance between the HTD approximation and the input tensor. 
We give a bound that applies to all tensors in Proposition~\ref{prop:bound_in_general}. We show that the input and output coincide for orthogonally decomposable tensors in Proposition~\ref{prop:orthogonal_case}. Our main result is Theorem~\ref{thm:nearly_orthogonal}, which bounds the distance between an input and output tensor for a tensor decomposition involving vectors that are close to orthogonal. 

The norm $\| \cdot \|_F$ refers to the Frobenius norm for matrices and tensors and the $2$-norm for vectors; i.e., the square root of the sum of the squares of the entries.
The $2$-norm of a matrix is denoted by $\| \cdot \|_2$.

\begin{proposition}
\label{prop:bound_in_general}
Let \(T\) be a symmetric tensor of format \(p \times p \times p \times p\).
Let \(T'=\sum_{i=1}^r (\mu_i \beta_i^2) \mathbf{b}_i^{\otimes 4}\) be the rank \(r\) HTD  approximation of \(T\).
Then  
\[
\|T'-T\|_F \leq {\left( \sum_{i=r+1}^{q} \mu_i^2 \right) }^{\frac12} + \sum_{i=1}^r |\mu_i| (1+ |\beta_i|){\left( \sum_{j = 2}^{r_i} (\beta_i^{(j)})^2 \right)}^{\frac12},
\]
where 
\(q\) is the rank of \(\Mat(T)\), \(r_i\) is the rank of \(M_i\),
the numbers \(\mu_1, \ldots, \mu_{r}\) are the eigenvalues of \(\Mat(T)\) in descending order of magnitude, and \(\beta_i := \beta_i^{(1)}\) is the highest magnitude eigenvalue of \(M_i\) with \(\beta_i^{(2)}, \ldots, \beta_i^{(r_i)}\) the other eigenvalues.
\end{proposition}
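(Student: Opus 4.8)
The plan is to exploit that both the flattening $\Mat$ and the reshaping of a vector in $\RR^{p^2}$ into a $p \times p$ matrix are linear isometries for the Frobenius norm, so that the entire algorithm can be read off inside tensor space. Writing the spectral decomposition $\Mat(T) = \sum_{i=1}^q \mu_i \mathbf{v}_i \mathbf{v}_i\T$ with $\mathbf{v}_1, \ldots, \mathbf{v}_q$ orthonormal and unflattening term by term gives
\[
T = \sum_{i=1}^q \mu_i \, (M_i \otimes M_i),
\]
where $M_i$ is the reshape of $\mathbf{v}_i$ and $M \otimes M$ denotes the order-four tensor with entries $(M)_{i_1 i_2}(M)_{j_1 j_2}$. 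Because $\langle \mathbf{v}_i\mathbf{v}_i\T, \mathbf{v}_j\mathbf{v}_j\T\rangle = (\mathbf{v}_i\T\mathbf{v}_j)^2 = \delta_{ij}$ and unflattening preserves inner products, the summands $M_i \otimes M_i$ are orthonormal. Each $M_i$ is symmetric with unit Frobenius norm, with spectral decomposition having top eigenpair $(\beta_i, \fb_i)$, so $\sum_{j=1}^{r_i}(\beta_i^{(j)})^2 = 1$.

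Next I would split the error with the triangle inequality along the truncation at rank $r$. Set $T_r = \sum_{i=1}^r \mu_i (M_i \otimes M_i)$. By orthonormality of the summands, $\|T - T_r\| = \left( \sum_{i=r+1}^q \mu_i^2 \right)^{1/2}$, which is exactly the first term of the bound. It then remains to control $\|T' - T_r\|$. Since $\beta_i^2 \fb_i^{\otimes 4} = P_i \otimes P_i$ with $P_i := \beta_i \fb_i\fb_i\T$ the best rank-one (top eigenpair) approximation of $M_i$, the triangle inequality gives
\[
\|T' - T_r\| \le \sum_{i=1}^r |\mu_i| \, \big\| M_i \otimes M_i - P_i \otimes P_i \big\|.
\]

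The key step, and the only place any care is needed, is the per-component bound. Writing $R_i := M_i - P_i = \sum_{j=2}^{r_i}\beta_i^{(j)}\fb_i^{(j)}(\fb_i^{(j)})\T$, I would telescope through the mixed term as
\[
M_i \otimes M_i - P_i \otimes P_i = M_i \otimes R_i + R_i \otimes P_i,
\]
rather than expanding $(P_i + R_i) \otimes (P_i + R_i)$ fully; the latter produces a stray $\|R_i\|^2$ contribution and a worse constant. Using $\|X \otimes Y\| = \|X\|\,\|Y\|$ together with $\|M_i\| = 1$, $\|P_i\| = |\beta_i|$, and $\|R_i\| = \left( \sum_{j=2}^{r_i}(\beta_i^{(j)})^2 \right)^{1/2}$ then yields $\|M_i \otimes M_i - P_i \otimes P_i\| \le (1 + |\beta_i|)\|R_i\|$. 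Substituting this into the sum and combining the two pieces via $\|T' - T\| \le \|T' - T_r\| + \|T_r - T\|$ gives the stated inequality. The main obstacle is precisely the choice of telescoping that delivers the clean coefficient $1 + |\beta_i|$; everything else reduces to the isometry bookkeeping and the multiplicativity of the Frobenius norm under $\otimes$.
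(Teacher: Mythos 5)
Your proposal is correct and follows essentially the same route as the paper's proof: truncating the eigendecomposition of $\Mat(T)$ to get the first term, then bounding each summand $\|M_i\otimes M_i - P_i\otimes P_i\|$ via a telescoping through the mixed term (the paper uses $\beta_i\mathbf{B}_i\otimes\mathbf{v}_i$, i.e.\ $P_i\otimes M_i$, as the intermediate), which yields the same factor $(1+|\beta_i|)\bigl(\sum_{j\geq 2}(\beta_i^{(j)})^2\bigr)^{1/2}$. Your phrasing in terms of the isometric unflattening $T=\sum_i\mu_i(M_i\otimes M_i)$ is just a notational repackaging of the paper's argument in $\RR^{p^2}$.
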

\begin{proof}
We use the notation from Algorithm~\ref{alg:hierarchical}.
We have
\[\| \Mat(T) - \sum_{i=1}^r \mu_i \mathbf{v}_i^{\otimes 2} \|_F^2 = \sum_{i=r+1}^q \mu_i^2 ,\,
\| M_i -\beta_i \mathbf{b}_i^{\otimes 2} \|_F^2 = \sum_{j = 2}^{r_i} (\beta_i^{(j)})^2,
\]
from the properties of the eigendecomposition of a symmetric matrix and the Frobenius norm.
Let \(T''\) be the \(p \times p \times p \times p\) tensor obtained from reshaping the truncated eigendecomposition \(\sum_{i=1}^r \mu_i \mathbf{v}_i^{\otimes 2}\) of \(\Mat(T)\). 
Then
$\|T- T''\|_F^2 = \sum_{i=r+1}^q \mu_i^2 $.
Let $\mathbf{B}_i \in \RR^{p^2}$ be the vectorization of $\mathbf{b}_i^{\otimes 2} \in \RR^{p \times p}$.
Then
\begin{align*}
\|T''-T'\|_F =& \| \sum_{i=1}^r \mu_i(\mathbf{v}_i^{\otimes 2}-\beta_i^2 \mathbf{B}_i^{\otimes 2})\|_F \\
\leq & \sum_{i=1}^r |\mu_i| \|\mathbf{v}_i^{\otimes 2} - \beta_i^2 \mathbf{B}_i^{\otimes 2}\|_F  \\
\leq &\sum_{i=1}^r |\mu_i| (\|\mathbf{v}_i^{\otimes 2} - \beta_i \mathbf{B}_i\otimes \mathbf{v}_i\|_F+ \|\beta_i^2\mathbf{B}_i^{\otimes 2} - \beta_i \mathbf{B}_i\otimes \mathbf{v}_i\|_F)\\
=  & \sum_{i=1}^r |\mu_i| (\| \mathbf{v}_i\| + |\beta_i| \|\mathbf{B}_i \| ) \| \mathbf{v}_i-\beta_i \mathbf{B}_i\| \\
= & \sum_{i=1}^r |\mu_i| (1+ |\beta_i|){\left( \sum_{j = 2}^{r_i} (\beta_i^{(j)})^2 \right)}^{\frac12},
\end{align*} 
where the penultimate equality follows from $\| \mathbf{x} \otimes \mathbf{y} \| = \| \mathbf{x} \| \cdot \| \mathbf{y} \|$ and the last equality uses $\| \mathbf{v}_i \| = \| \mathbf{B}_i \| = 1$.
We conclude with the triangle inequality 
$
\|T-T'\|_F \leq \| T-T''\|_F + \| T''-T'\|_F$.
\end{proof} 

The quantity in Proposition~\ref{prop:bound_in_general} is small if $\Mat(T)$ is well-approximated by a matrix of rank $r$, and each $M_i$ is well-approximated by a matrix of rank one.
Orthogonally decomposable tensors are those with a decomposition into orthogonal rank one terms; that is, a decomposition $T=\sum_{i=1}^r \nu_i \fb_i^{\otimes 4}$, where $\fb_1,\ldots,\fb_r$ are orthonormal~\cite{robeva2016orthogonal}. For orthogonally decomposable tensors, HTD recovers the exact decomposition. 

\begin{proposition}
\label{prop:orthogonal_case}
Let $T=\sum_{i=1}^r \nu_i \fb_i^{\otimes 4}$, where the vectors $\fb_1,\ldots,\fb_r$ are orthonormal and the coefficients $\nu_1,\ldots,\nu_r$ are distinct. Then the rank $r$ HTD approximation is the tensor~$T$.
\end{proposition}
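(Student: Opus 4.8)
The plan is to trace the three steps of Algorithm~\ref{alg:hierarchical} on the orthogonally decomposable input $T=\sum_{i=1}^r \nu_i \fb_i^{\otimes 4}$ and show that at each stage the algorithm recovers the data $(\nu_i, \fb_i)$ exactly. The crux is Step~1: I claim that the eigendecomposition of $\Mat(T)$ is forced, up to signs, to have the vectorized rank-one matrices $\fb_i\fb_i\T$ as its eigenvectors, with eigenvalues $\nu_i$.

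First I would flatten $T$. Writing $\mathbf{B}_i \in \RR^{p^2}$ for the vectorization of $\fb_i^{\otimes 2} = \fb_i \fb_i\T$, flattening the decomposition gives $\Mat(T) = \sum_{i=1}^r \nu_i \mathbf{B}_i \mathbf{B}_i\T$. The vectors $\mathbf{B}_i$ are orthonormal, since $\langle \mathbf{B}_i, \mathbf{B}_j\rangle = \langle \fb_i\fb_i\T, \fb_j\fb_j\T\rangle = (\fb_i\T\fb_j)^2 = \delta_{ij}$ by orthonormality of the $\fb_i$. Hence $\Mat(T) = \sum_i \nu_i \mathbf{B}_i\mathbf{B}_i\T$ already exhibits a spectral decomposition of the symmetric matrix $\Mat(T)$: its nonzero eigenvalues are among $\nu_1,\dots,\nu_r$ with eigenvectors $\mathbf{B}_1,\dots,\mathbf{B}_r$, and the remaining eigenvalues vanish since $\Mat(T)$ has rank at most $r$. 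Because the $\nu_i$ are distinct, each nonzero eigenvalue has a one-dimensional eigenspace, so the corresponding unit eigenvectors are determined up to sign; the top $r$ eigenvalues $\mu_i$ by magnitude are exactly the $\nu_i$ (in some order) with $\mathbf{v}_i = \pm\mathbf{B}_i$. Any reordering is harmless, as the final output is a sum over all $r$ terms (and a vanishing coefficient, if present, contributes a term $\mu_i\beta_i^2\fb_i^{\otimes 4}=0$).

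Next I would run Steps~2 and~3 on each eigenvector. Reshaping $\mathbf{v}_i = \pm \mathbf{B}_i$ back to a $p \times p$ matrix returns $M_i = \pm \fb_i\fb_i\T$, a rank-one symmetric matrix whose single nonzero eigenvalue is $\pm 1$ with eigenvector $\fb_i$. Thus the top eigenpair computed in Step~3 is $\beta_i = \pm 1$ with unit eigenvector $\pm \fb_i$, so $\beta_i^2 = 1$. The output is $\sum_{i=1}^r (\mu_i \beta_i^2)\, \fb_i^{\otimes 4} = \sum_{i=1}^r \nu_i \fb_i^{\otimes 4} = T$, where the sign ambiguity in the recovered eigenvector disappears upon taking the fourth tensor power, since $(-\fb_i)^{\otimes 4} = \fb_i^{\otimes 4}$.

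The only delicate point, and the step I expect to rely essentially on the distinctness hypothesis, is the claim in Step~1 that the eigenvectors of $\Mat(T)$ are the individual $\mathbf{B}_i$. If two coefficients coincide, say $\nu_i = \nu_j$, the corresponding eigenspace is two-dimensional and Step~1 may legitimately return a mixed unit eigenvector $\alpha\mathbf{B}_i + \beta\mathbf{B}_j$; reshaping this yields $\alpha\fb_i\fb_i\T + \beta\fb_j\fb_j\T$, whose leading eigenvector is one of $\fb_i,\fb_j$ rather than a faithful rank-one summand, breaking the exact reconstruction. So the argument hinges on distinctness to guarantee one-dimensional eigenspaces, and hence that each $\mathbf{v}_i$ is a signed copy of a single $\mathbf{B}_i$.
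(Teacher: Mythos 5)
Your proposal is correct and follows essentially the same route as the paper's proof: flatten $T$, observe that the vectorized matrices $\mathbf{B}_i$ are orthonormal because $\langle \mathbf{B}_i,\mathbf{B}_j\rangle=\langle\fb_i,\fb_j\rangle^2$, conclude that $\sum_i\nu_i\mathbf{B}_i^{\otimes 2}$ is already the eigendecomposition of $\Mat(T)$, and then note that each reshaped eigenvector $\pm\fb_i\fb_i\T$ has top eigenpair $(\pm 1,\fb_i)$ so the output reassembles to $T$. Your extra care about sign indeterminacy and about why distinctness of the $\nu_i$ is needed for one-dimensional eigenspaces is a welcome sharpening of details the paper leaves implicit, but it is the same argument.
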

\begin{proof}
The flattening $\Mat(T)$ has decomposition  $\sum_{i=1}^r \nu_i \mathbf{B}_i^{\otimes 2}$, where $\mathbf{B}_i \in \RR^{p^2}$ is the vectorization of $\mathbf{b}_i^{\otimes 2} \in \RR^{p \times p}$.
We have 
$\langle \mathbf{B}_i , \mathbf{B}_j \rangle = \langle \fb_i, \fb_j \rangle^2 = 0$ for all $i \neq j$,
since the vectors $\fb_i,\fb_j$ are orthogonal.
Hence this expression for $\Mat(T)$ is a sum of outer products of orthogonal vectors, so it is the eigendecomposition of $\Mat(T)$. The matrix reshaped from the eigenvector~$\mathbf{B}_i$ is $M_i = \fb_i^{\otimes 2}$. It has top eigenvalue $1$ with corresponding eigenvector $\fb_i$. Hence the output of HTD is $\sum_{i=1}^r \nu_i \fb_i^{\otimes 4}$.
\end{proof}

We extend Proposition~\ref{prop:orthogonal_case} to decompositions where the vectors $\fb_i$ are close to orthogonal and the input tensor is noisy.
The condition that the matrices $\fb_1^{\otimes2},\ldots, \fb_r^{\otimes 2}$ are linearly independent ensures that $\Mat(T)$ has rank~$r$. 
This condition holds for generic vectors $\fb_i$, provided $r\leq{{p+1 \choose 2}}$.
The quantity 
\(\min \{ \| \fb_i- \fb_i'\|, \| \fb_i + \fb_i'\| \}\) arises because of the sign indeterminacy in the vectors in the decompositions, due to the equality \((- \mathbf{b}_i)^{\otimes d} = \mathbf{b}_i^{\otimes d}\) for \(d\) even.

We sketch the proof of Theorem~\ref{thm:nearly_orthogonal}. The full proof is in Section \ref{app: thm properties of HTD} of the Appendix.

\begin{theorem}\label{thm:nearly_orthogonal}
Fix vectors $\fb_1, \ldots, \fb_\ell \in \mathbb{R}^p$ with
$
|\langle \fb_i, \fb_j \rangle| \leq \epsilon$ for all $i \neq j$.
Let
$$
T = \sum_{i=1}^\ell \nu_i \fb_i^{\otimes 4},
$$
where $\nu_1 > \cdots > \nu_\ell$, $\ell \leq p$, and $\fb_1^{\otimes 2},\ldots, \fb_\ell^{\otimes 2}$ are linearly independent. 
Fix $\hat{T}$ with
$
\| \hat{T} - T \|_F \leq \delta$. 
Let $\fc_i$ be the output patterns of the HTD algorithm with input tensor $\hat{T}$ and $\mu_i$ the corresponding recovered scalars ordered so that $\mu_1 > \cdots > \mu_\ell$.
Then for any $i \in [\ell]$,
$$
|\nu_i - \mu_i| = O(\epsilon^2) + O(\delta), \quad \text{and} \quad 
$$
$$
\min\left\{ \|\fb_i - \fc_i\|, \|\fb_i + \fc_i\| \right\} = O(\epsilon^2) + O(\delta).
$$
\end{theorem}

\begin{proof}[Proof Sketch]
Fix
$
M = \Mat(T)$. Then $M = \sum_{i=1}^r \nu_i \mathbf{B}_i^{\otimes 2}
$,
where $\mathbf{B}_i~=~\Vect(\fb_i^{\otimes 2})$.
Using Gram-Schmidt orthogonalization, we can construct a matrix $M'$ in $\mathbb{R}^{p^2 \times p^2}$ with eigendecomposition
$
\sum_{i=1}^\ell \nu_i (\mathbf{B}_i')^{\otimes 2}
$
such that
\begin{equation}\label{eq:B' B}
\| \mathbf{B}_i' - \mathbf{B}_i \| \leq 2(\ell-1)\epsilon^2+ O(\epsilon^4),
\end{equation}
\begin{equation}\label{eq: M',M}
\|M - M'\|_F \leq K \epsilon^2 + O(\epsilon^4),
\end{equation}
where $K =  \sqrt{8}\sum_{i=1}^\ell |\nu_i|(i-1).$
Suppose $\hat{M} = \text{Mat}(\hat{T})$ has eigendecomposition
$
\hat{M} = \sum_{i=1}^\ell \hat{\nu}_i \hat{\mathbf{B}}_i^{\otimes 2}.
$
The difference between $\hat{M}$ and $M'$ is bounded by 
$$
\|\hat{M}-M'\|_F \leq \|\hat{M}-M\|_F + \|M-M'\|_F \leq K\epsilon^2 + \delta + O(\epsilon^4)
$$
using the triangle inequality.
We thus obtain
\begin{equation}\label{eq:nu}
|\nu_i - \hat{\nu}_i| \leq \delta + K\epsilon^2 + O(\epsilon^4),
\end{equation}
\begin{equation}\label{eq: bhat bprime}
\| \hat{\mathbf{B}}_i - \mathbf{B}_i' \| \leq \frac{2^{\frac{3}{2}}} {\nu} ( \delta + K\epsilon^2 + O(\epsilon^4)),
\end{equation}
by Weyl's Theorem and the variant of Davis-Kahan Theorem in \cite{yu2015useful}, where $\nu~=~\min_{i \neq j} \{ |\nu_i - \nu_j|, |\nu_i| \}$.
We bound the difference between $\mathbf{B}_i$ and $\hat{\mathbf{B}}_i$ using \eqref{eq:B' B} and~\eqref{eq: bhat bprime}:
\begin{equation}\label{eq:bprime bhat}
\| \mathbf{B}_i - \hat{\mathbf{B}}_i \| \leq \| \mathbf{B}_i' - \mathbf{B}_i \| + \| \mathbf{B}_i' - \hat{\mathbf{B}}_i \| \leq L \epsilon^2 + \frac{2^\frac{3}{2}} {\nu} \delta + O(\epsilon^4),
\end{equation}
where $L = 2^{3/2} \frac{K}{\nu} + 2\ell-2$.
Then, by Weyl's theorem,
\begin{equation}\label{eq:alpha and 1}
|\alpha - 1| \leq \| \mathbf{B}_i - \hat{\mathbf{B}}_i \|,
\end{equation}
where $\alpha$ is the top eigenvalue of $\Mat(\hat{\mathbf{\mathbf{B}}}_i)$. 
HTD implies $\mu_i = \alpha^2 \hat{\nu}_i$.
The bound on $|\mu_i - \nu_i|$ then follows from \eqref{eq:nu} and \eqref{eq:alpha and 1}.
The bound of
$
\min\{ \|\fb_i - \fc_i\|, \|\fb_i + \fc_i\| \}
$
follows from \eqref{eq:bprime bhat} and~\cite{yu2015useful}, 
since $\fc_i$ is the top eigenvector of $\hat{\mathbf{B}}_i$.
\end{proof}

\section{Tensor decompositions for cICA}\label{sec:id ang alg}

Our cICA model assumes 
$\by = A \mathbf{z}$ and $\bx = A \mathbf{z}' + B \mathbf{s}$,
for $A \in \RR^{p \times r}$ and $B \in \RR^{p \times \ell}$, 
see~\eqref{eqn:cica}. This leads to the cICA tensor decompositions~\eqref{eqn:tensor_decomp}. 
One does not assume a relationship between $\bz$ and $\bz'$. 
We discuss the algorithm and identifiability of cICA in subsection \ref{sec:general_cica}.
We explain how to use cICA for dimensionality reduction in Section~\ref{sec:ranking_bs}. This projects data onto a subspace given by certain columns of the foreground mixing $B$.
We bound the end-to-end error of our algorithm in Section~\ref{sec:error analysis}.
When $\mathbf{z}' = \gamma \mathbf{z}$ for some scalar $\gamma$, we discuss an alternative algorithm in Section \ref{app: proportional cICA} of the Appendix and its performance for various datasets in Section \ref{app:simulations_details} of the Appendix.

\subsection{cICA Algorithm and Identifiability}
\label{sec:general_cica}

We present Algorithm~\ref{alg:generic a b} for cICA. 
Steps 1 and 3 both decompose a symmetric order four tensor. We use the subspace power method~\cite{kileel2019subspace} in Step 1 to prioritize the accuracy of the tensor decomposition. We use Algorithm~\ref{alg:hierarchical} in Step 3 to prioritize interpretability and efficiency.
We provide numerical experiments to justify these choices of algorithm in Section \ref{sec:SPM HTD best}.

\begin{algorithm}[htbp]
\caption{Recover background mixing $A$ and foreground mixing $B$ from the fourth cumulants of the background and foreground}\label{alg:generic a b}
\begin{algorithmic}[1]
\renewcommand{\algorithmicrequire}{\textbf{Input:}}
\Require tensors $\kappa_{4}(\bx),\kappa_{4}(\by)$ and positive integers $r$ and $\ell$.  
\State \textbf{Recover $A$:} Compute the symmetric tensor decomposition of $\kappa_{4}(\by)$ via the subspace power method~\cite{kileel2019subspace}. This recovers $A$ up to permutation and scaling of columns.
\State \textbf{Subtract background from $\kappa_{4}(\bx)$:} 
Learn the coefficients $\lambda_i^\prime$ of $\fa_1^{\otimes 4},\ldots,\fa_r^{\otimes 4}$ in $\kappa_4(\bx)$ using the deflation step of the subspace power method.
\State \textbf{Recover $B$:} Compute the symmetric tensor decomposition of $\sum_{i=1}^\ell\nu_i\fb^{\otimes 4}=\kappa_{4}(\bx)-\sum_{i=1}^r \lambda_i^\prime \fa_i^{\otimes 4}$, using Algorithm~\ref{alg:hierarchical}.
\renewcommand{\algorithmicrequire}{\textbf{Output:}}
\Require Mixing matrices $A$ and $B$.
\end{algorithmic}
\end{algorithm}

We study the identifiability of the algorithm, that is, the uniqueness of the vectors and scalars it outputs, assuming genericity. Our genericity assumption holds almost surely in the space of parameters.
We use the following lemma.  
\begin{lemma}
\label{lem:rankq}
Let vectors $\fa_i \in \RR^p$ and scalars $\lambda_i \in \RR$ be generic. Then the decomposition $T = \sum_{i=1}^q \lambda_i \fa_i^{\otimes d}$ of a symmetric $p \times p \times p \times p$ tensor $T$ is unique for 
\[ q \leq  \begin{cases} 
\lceil \frac{1}{p}{p+3\choose 4} -1 \rceil & \text{for } p\notin \{ 3,4,5 \}, \\
\lceil \frac{1}{p}{p+3\choose 4} \rceil & \text{for } p \in \{ 3,5 \}, \\ 
9 & \text{for } p=4, \text{ provided $q \neq 8$.}
\end{cases} \] 
\end{lemma}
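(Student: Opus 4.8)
The plan is to recognize the statement as a generic identifiability result for symmetric rank-$q$ decompositions of quartics and to deduce it from the classification of subgeneric identifiability of forms together with the Alexander--Hirschowitz theorem. With $d=4$, a decomposition $T=\sum_{i=1}^q \lambda_i \fa_i^{\otimes 4}$ is a Waring-type decomposition of the associated quartic form, and asking for its uniqueness for generic parameters is exactly asking whether the $q$-th secant map of the degree-$4$ Veronese variety $v_4(\PP^{p-1})$ is generically injective onto its image. So the first step is to set up this dictionary, recording that ``unique'' means up to permutation of the summands and the rescalings $(\lambda_i,\fa_i)\mapsto(t^{-4}\lambda_i, t\fa_i)$ (including the harmless sign flip $\fa_i\mapsto-\fa_i$, since $d$ is even). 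I would also dispatch the real-versus-complex issue: the cited statements are over $\CC$, but if a generic complex tensor of rank $q$ has a single complex decomposition, then for a generic real $T$ that decomposition must be conjugation-invariant, hence real, so complex generic identifiability descends to the real statement needed here.

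The second step is to pin down the generic rank $g_p$ using Alexander--Hirschowitz. For $d=4$ the Veronese $v_4(\PP^{p-1})$ is non-defective except in precisely the three classical exceptional cases $p\in\{3,4,5\}$ (plane quartics, quaternary quartics, and quartics in five variables), where the relevant secant variety drops dimension by one. Hence $g_p=\lceil \tfrac1p \binom{p+3}{4}\rceil$ for $p\notin\{3,4,5\}$, while for $p\in\{3,4,5\}$ the true generic rank is one larger than this naive value. A short computation of $\tfrac1p\binom{p+3}{4}$ then shows that each bound in the statement equals $g_p-1$: for generic $p$ the identity $\lceil x-1\rceil=\lceil x\rceil-1$ gives the largest \emph{subgeneric} rank, and for $p\in\{3,5\}$ the upward correction to $g_p$ turns the value $\lceil \tfrac1p\binom{p+3}{4}\rceil$ into a subgeneric rank as well. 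The point of staying at $q\le g_p-1$ is that it keeps us strictly below the generic rank, so we never need the more delicate ``perfect case'' identifiability theory.

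The third step is to invoke subgeneric identifiability. By the classification of generic identifiability of forms of subgeneric rank, a general symmetric tensor of subgeneric rank is identifiable with only finitely many exceptional triples, and the unique exception with $d=4$ is $(p,q)=(4,8)$, where a general quaternary quartic of rank $8$ admits exactly two decompositions. This is precisely the excluded case ``$q\neq 8$'' when $p=4$; the remaining value $q=9$ there is still subgeneric (since $g_4=10$) and is not exceptional, so it is identifiable, while $q\le 7$ is covered directly. For $p\in\{3,5\}$, the admissible top value $q=\lceil\tfrac1p\binom{p+3}{4}\rceil$ is the defective-but-subgeneric rank, which is not an exceptional triple and so is identifiable; and for all $p\notin\{3,4,5\}$ no admissible $q$ meets an exceptional triple.

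Finally, I would assemble the cases and verify the arithmetic. I expect the main obstacle to be bookkeeping rather than a new idea: one must confirm, regime by regime, that the stated bound equals the largest subgeneric rank, which hinges on correctly folding the Alexander--Hirschowitz defect at $p\in\{3,4,5\}$ into the value of $g_p$, and check that the only subgeneric exception in the quartic range is $(4,4,8)$, so that the single proviso $q\neq 8$ suffices. Care with the ceilings at the boundary, and with the fact that defectivity of a secant variety does not by itself destroy identifiability, is where the argument is most error-prone.
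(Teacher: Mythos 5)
Your argument is essentially the paper's own proof written out in more detail: the paper likewise obtains the generic rank of quartics from the Alexander--Hirschowitz theorem (with the defective cases $p\in\{3,4,5\}$ shifting the generic rank up by one) and then cites the Chiantini--Ottaviani--Vannieuwenhoven classification of identifiability at subgeneric rank, with $(p,q)=(4,8)$ as the unique quartic case admitting exactly two decompositions. The extra steps you supply --- the secant-variety dictionary, the descent from $\CC$ to $\RR$, and the ceiling arithmetic identifying each stated bound with the generic rank minus one --- are all consistent with that two-sentence argument.

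One remark on the step you yourself flag as delicate. Your assertion that ``defectivity of a secant variety does not by itself destroy identifiability'' is the opposite of what happens at the boundary ranks here. For $(p,q)\in\{(3,5),(4,9),(5,14)\}$ the relevant secant variety is exactly the Alexander--Hirschowitz defective one, and a direct dimension count shows the generic fibre of the secant map is positive-dimensional (e.g.\ for $p=3$, $q=5$: the abstract secant variety has dimension $14$ while $\sigma_5(v_4(\PP^2))$ is a hypersurface of dimension $13$), so a generic form of that rank has a positive-dimensional family of decompositions; the subgeneric-identifiability classification lists these defective triples as exceptions alongside $(4,3,8)$. So your justification of the top values of $q$ for $p\in\{3,4,5\}$ (including $q=9$ at $p=4$) does not go through as written. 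The paper's own proof makes the identical silent leap, so this is not a divergence from the paper --- but it is the one place where both arguments, and arguably the statement itself at those boundary values, need repair.
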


\begin{proof}
    The rank of a generic $p \times p \times p \times p$ symmetric tensor is $\lceil \frac{1}{p} { p+3\choose 4} \rceil$ for $p\notin \{ 3,4,5\}$ and $\lceil \frac{1}{p} {p+3\choose 4} \rceil+1$ for $p \in \{ 3,4,5 \}$, by the Alexander-Hirschowitz theorem~\cite{alexander1995poly}.
    Generic rank $q$ tensors in this space, with $q$ strictly below the generic rank, have unique symmetric tensor decomposition for $(p,q)\neq (4,8)$ and two tensor decompositions for $p=4,q=8$ by \cite[Theorem 1.1]{chiantini2017generic}. 
\end{proof}

\begin{proposition}[Identifiability of the cICA tensor decomposition]
\label{prop:iden_cica_tensor}
The joint decomposition 
\begin{equation}
    \kappa_{4}(\by)=\sum_{i=1}^r \lambda_i \fa_i^{\otimes 4}, \qquad \quad \kappa_{4}(\bx)=\sum_{i=1}^r \lambda_i' \fa_i^{\otimes 4}+\sum_{j=1}^\ell \nu_j \fb_j^{\otimes 4},
\end{equation}
is unique for generic $\fa_i, \fb_j, \lambda_i,\lambda_i',\nu_j$, where  $i\in [r]$ and $j\in [\ell]$, when $r+\ell<\lceil \frac{1}{p}{p+3\choose 4}  \rceil$ for $p\neq 3,4,5$, $r+\ell\leq \lceil \frac{1}{p}{p+3\choose 4} \rceil$ for $p=3,5$, and when $r+\ell\leq 9, r+\ell\neq 8$ for $p=4$.
\end{proposition}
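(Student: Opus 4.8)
The plan is to reduce the uniqueness of the joint decomposition to a single application of Lemma~\ref{lem:rankq} to the foreground tensor \(\kappa_4(\bx)\), and to use the background tensor \(\kappa_4(\by)\) only as a device to separate the shared patterns \(\fa_i\) from the salient patterns \(\fb_j\). Concretely, suppose we are handed two representations of the pair \((\kappa_4(\by),\kappa_4(\bx))\) in the stated form, with parameters \((\fa_i,\lambda_i,\lambda_i',\fb_j,\nu_j)\) and \((\tilde\fa_i,\tilde\lambda_i,\tilde\lambda_i',\tilde\fb_j,\tilde\nu_j)\); I want to show they agree up to permutation of summands and sign of the vectors, the sign ambiguity being harmless since the order \(d=4\) is even.

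First I would note that \(\kappa_4(\bx)\) is a symmetric tensor of rank \(r+\ell\), and that for generic parameters it is a \emph{generic} tensor of that rank: the parametrization \((\mathbf{c}_k,c_k)\mapsto\sum_k c_k\,\mathbf{c}_k^{\otimes 4}\) dominates the locus of tensors of rank at most \(r+\ell\), and \(\fa_1,\dots,\fa_r,\fb_1,\dots,\fb_\ell\) together form a generic tuple of \(r+\ell\) vectors. The hypotheses on \(r+\ell\) are exactly the bounds of Lemma~\ref{lem:rankq} with \(q=r+\ell\) (using \(\lceil x-1\rceil=\lceil x\rceil-1\) to reconcile the two forms of the \(p\notin\{3,4,5\}\) bound, and the matching exception \(q\neq 8\) for \(p=4\)). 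Hence Lemma~\ref{lem:rankq} applies and the unordered multiset of rank-one terms of \(\kappa_4(\bx)\) is unique; in particular the two combined multisets
\[
\{\lambda_i'\fa_i^{\otimes 4}\}_{i}\cup\{\nu_j\fb_j^{\otimes 4}\}_{j}
=\{\tilde\lambda_i'\tilde\fa_i^{\otimes 4}\}_{i}\cup\{\tilde\nu_j\tilde\fb_j^{\otimes 4}\}_{j}
\]
coincide, yielding a common set of \(r+\ell\) directions \(\mathbf{c}_1,\dots,\mathbf{c}_{r+\ell}\) with their foreground coefficients attached.

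Next I would use the background to decide which of these directions are the \(\fa_i\). For generic vectors the rank-one tensors \(\mathbf{c}_1^{\otimes 4},\dots,\mathbf{c}_{r+\ell}^{\otimes 4}\) are linearly independent, since \(r+\ell\le\lceil\tfrac1p\binom{p+3}{4}\rceil\le\binom{p+3}{4}=\dim\Sym^4(\RR^p)\) and generic points on the nondegenerate Veronese variety lie in linearly general position. As \(\kappa_4(\by)=\sum_{i=1}^r\lambda_i\fa_i^{\otimes 4}\) lies in the span of this independent family, it has a unique expansion \(\kappa_4(\by)=\sum_k c_k\,\mathbf{c}_k^{\otimes 4}\), and both representations must produce that same expansion. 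In the first, \(c_k=\lambda_i\) when \(\mathbf{c}_k=\pm\fa_i\) and \(c_k=0\) when \(\mathbf{c}_k=\pm\fb_j\), and likewise with the tilded data in the second. Since \(\lambda_i\) is generic, hence nonzero, the directions with \(c_k\neq 0\) are precisely the shared patterns; comparing the two expansions forces \(\{\fa_i\}=\{\tilde\fa_i\}\) with \(\lambda_i=\tilde\lambda_i\), and the complementary directions give \(\{\fb_j\}=\{\tilde\fb_j\}\). The coefficients \(\lambda_i'\) and \(\nu_j\) then match from the already-identified foreground multiset, proving joint uniqueness.

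I expect the main obstacle to be this separation step rather than the uniqueness of either individual decomposition. It would be tempting to apply Lemma~\ref{lem:rankq} directly to \(\kappa_4(\by)\), but that fails exactly in the excluded configuration \(p=4\), \(r=8\) (forced by \(r+\ell\le 9\), \(r+\ell\neq 8\), \(\ell\ge 1\)), where the rank-\(8\) background tensor admits two abstract decompositions. Routing the argument through linear independence of the \emph{foreground's} rank-one terms sidesteps this entirely: relative to the nine linearly independent tensors recovered from \(\kappa_4(\bx)\), the expansion of \(\kappa_4(\by)\) is unique and its support pins down the \(\fa_i\) unambiguously. The remaining work is genericity bookkeeping, namely checking that uniqueness of the \(\kappa_4(\bx)\) decomposition, linear independence of the \(\mathbf{c}_k^{\otimes 4}\), nonvanishing of the \(\lambda_i\), and pairwise distinctness of the directions all hold off a common measure-zero set; this is immediate, as each is a generic condition and a finite intersection of dense open conditions remains dense open.
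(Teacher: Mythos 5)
Your proposal is correct and takes essentially the same route as the paper: apply Lemma~\ref{lem:rankq} with \(q=r+\ell\) to get uniqueness of the rank-one terms of \(\kappa_4(\bx)\), then use the expansion of \(\kappa_4(\by)\) in those recovered terms to separate the shared patterns \(\fa_i\) (nonzero coefficients) from the salient patterns \(\fb_j\) (zero coefficients). Your linear-independence argument for the Veronese points simply makes explicit what the paper compresses into ``solve a linear system,'' and your observation that one should not apply Lemma~\ref{lem:rankq} directly to \(\kappa_4(\by)\) is a sound reading of why the paper routes everything through the foreground tensor.
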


\begin{proof}
The tensor decomposition for cICA in the statement is identifiable when the symmetric tensor decomposition of $\kappa_4(\bx)$ is unique, as follows. 
The tensor decomposition of $\kappa_4(\bx)$, gives vectors $\fa_i,\fb_j$ up to permutation and scaling. Then we can solve a linear system to find the decomposition $\kappa_4(\by)=\sum_{i=1}^r \lambda_i \fa_i^{\otimes 4}$. 
It remains to study the identifiability of the decomposition of $\kappa_4(\mathbf{x})$. 
    It is a symmetric $p \times p \times p \times p$ tensor of rank $r + \ell$. Hence the uniqueness follows from Lemma~\ref{lem:rankq}, setting $q = r + \ell$.
\end{proof}

When $(\lambda_1,\ldots,\lambda_r)$ and $(\lambda_1',\ldots,\lambda_r')$ are proportional as vectors in $\RR^r$, we have a stronger identifiability result than the one for two separate tensor decompositions in Proposition~\ref{prop:iden_cica_tensor}.

\begin{proposition}\label{prop: new identifiability}
Consider the joint decomposition
$$
\kappa_4(\by) = \sum_{i=1}^{r} \lambda_i \fa_i^{\otimes 4}, \quad 
\kappa_4(\bx) = \sum_{i=1}^{r} \lambda_i' \fa_i^{\otimes 4} + \sum_{j=1}^{\ell} \nu_j \fb_j^{\otimes 4}.
$$
Suppose that $(\lambda_1', \dots, \lambda_{r}') = \mu(\lambda_1, \dots, \lambda_r)$ for some $\mu\in \RR\setminus \{0\}$. 
Suppose further that $\fa_i, \fb_j, \lambda_i, \mu, \nu_j$ for $i\in [r]$ and $j\in [\ell]$ are generic.
Then, the joint decomposition of $\kappa_4(\by)$ and $\kappa_4(\bx)$ is unique provided
$$
\max \left\{ \frac{1}{p} \left\lceil \frac{r}{\ell}\right\rceil + \ell , r \right\} < \frac{1}{p} \binom{p+3}{4}.
$$
\end{proposition}

\begin{proof}
We can assume that $r$ is a multiple of $\ell$: if the joint decomposition is unique with $r$ replaced by the possibly larger number $\lceil r/\ell \rceil \ell$, then the original joint decomposition with $r$ terms is also unique. 

Let $k = \frac{r}{\ell}$ and define the tensors $T_1, \ldots, T_k$ by taking a subset of $\ell$ consecutive terms from $\kappa_4(\by)$: $T_j = \sum_{i=(j-1)\ell + 1}^{j\ell}\nu_i \fb_i^{\otimes 4}$. Define
\begin{align*}
W &= \Span\left\{\kappa_4(\fx), T_1,  \dots,  T_k \right\}.
\end{align*}
Then $\sum_{i=1}^\ell \nu_i \fb_i^{\otimes 4}\in W$, since the difference between it and $\kappa_4(\fx)$ is $\mu(T_1 + \cdots + T_k)$.

Let $X\in \PP^N$ be the variety of symmetric border rank at most $\ell$ tensors in $(\mathbb{R}^{p})^{\otimes 4}$, where $N = \binom{p+3}{4}-1$. 
The tensors
\begin{equation}\label{eq: rank l tensors}
\sum_{j=1}^{\ell}\nu_j \fb_j^{\otimes 4}, T_1 ,   \dots, T_k 
\end{equation}
are generic points on $X$, 
since $\fa_i,\fb_j,\lambda_i,\nu_j$ are generic for $i\in[r], j\in[\ell]$. 
We have projective dimensions $\dim X \leq \ell p - 1$ and $\dim W = k$.
When $k+\ell p < \binom{p+3}{4}$, we have the inequality
\[
\dim X + \dim W < N.
\]
Thus the intersection $W \cap X$ contains only the points in \eqref{eq: rank l tensors}, by the Generalized Trisecant Lemma~\cite[Proposition 2.6]{chiantini2002weakly}. 
The rank $r$ satisfies the condition in Lemma \ref{lem:rankq}, since $rp< \binom{p+3}{4}$, so we can uniquely recover $T_1, \ldots, T_k$. 
We can thus recover the linear space $W$ and therefore we can recover $\sum_{j=1}^{\ell}\nu_j \fb_j^{\otimes 4}$ from $W\cap X$.
The decomposition of $\sum_{i=1}^{\ell}\nu_i b_i^{\otimes 4}$ is unique, 
since $\ell p< {\binom{p+3}{4}}$, and $\nu_j, \fb_j$ are generic for $j\in [\ell]$. Hence, the overall joint decomposition is unique.
\end{proof}

\begin{remark}
An alternative approach to study the identifiability of the joint decomposition is to stack $\kappa_4(\bx)$ and $\kappa_4(\by)$ to form a partially symmetric tensor of size $2 \times p \times p \times p \times p$. This connects to the study of Segre-Veronese varieties~\cite{abo2024non}. However, existing results do not apply to our setting, 
because the pair $(\kappa_4(\bx), \kappa_4(\by))$ has additional structure:
Proposition~\ref{prop: new identifiability} is a first step towards identifiability for partially symmetric tensors with rank-one components that appear in a subset of slices.
\end{remark}

We say that Algorithm~\ref{alg:generic a b} is identifiable if, for generic $\fa_i,\fb_j,\lambda_i,\lambda_i',\nu_j$ where $i\in [r]$, $j\in [\ell]$, we can uniquely recover the vectors $\fa_1,\ldots,\fa_r$, the coefficients $\lambda_1', \ldots, \lambda_r'$, and the vectors $\fb_1,\ldots,\fb_\ell$.

\begin{proposition}
\label{prop:iden_alg2}
Algorithm~\ref{alg:generic a b} is identifiable 
when $r+\ell \leq {p+1 \choose 2}$ for $p\neq 4$ and $r+\ell \leq 9, r,\ell \neq 8$ for $p=4$.
\end{proposition}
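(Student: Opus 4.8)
The plan is to prove identifiability one stage at a time, following the three steps of Algorithm~\ref{alg:generic a b}, and to track the uniqueness of each quantity it produces, always up to the unavoidable permutation and scaling (including sign) of rank-one summands. Throughout I write $\mathbf{A}_i$ for the vectorization of $\fa_i^{\otimes 2}$ and $\mathbf{B}_j$ for the vectorization of $\fb_j^{\otimes 2}$, so that the flattening of $\fa_i^{\otimes 4}$ is $\mathbf{A}_i \mathbf{A}_i\T$ and that of $\fb_j^{\otimes 4}$ is $\mathbf{B}_j \mathbf{B}_j\T$. The one structural fact I would establish at the outset is that, for generic vectors, the $r+\ell$ symmetric matrices $\fa_1^{\otimes 2},\ldots,\fa_r^{\otimes 2},\fb_1^{\otimes 2},\ldots,\fb_\ell^{\otimes 2}$ are linearly independent; since the space of symmetric $p\times p$ matrices has dimension $\binom{p+1}{2}$, this holds precisely when $r+\ell\leq \binom{p+1}{2}$. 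Everything rests on this, and it also gives $r\leq\binom{p+1}{2}$ and $\ell\leq\binom{p+1}{2}$ for free.

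\textbf{Step 1 (recover $A$).}
The flattening $\Mat(\kappa_4(\by))=\sum_{i=1}^r \lambda_i \mathbf{A}_i \mathbf{A}_i\T$ has rank exactly $r$ because the $\mathbf{A}_i$ are linearly independent. I would invoke Lemma~\ref{lem:rankq} with $q=r$ to conclude that the symmetric decomposition of $\kappa_4(\by)$ is unique, so the subspace power method recovers $\fa_1,\ldots,\fa_r$ uniquely up to permutation and scaling. The point needing care is that $r$ lies in the admissible range of Lemma~\ref{lem:rankq}: for $p\geq 7$ this is immediate since $r\leq\binom{p+1}{2}$ is at or below the uniqueness threshold, for the small cases $p\in\{2,3,5,6\}$ the value $\binom{p+1}{2}$ equals the generic rank and one uses $\ell\geq 1$ to force $r\leq\binom{p+1}{2}-1$, and for $p=4$ the hypotheses $r+\ell\leq 9$ and $r\neq 8$ are exactly what Lemma~\ref{lem:rankq} requires.

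\textbf{Step 2 (recover $\lambda_i'$).}
Here is the crux of the argument, and the reason the binding constraint involves $r+\ell$ rather than $r$ and $\ell$ separately: I claim each $\lambda_i'$ is pinned down by a rank-drop condition using only the already-recovered $\fa_i$. Writing $M=\Mat(\kappa_4(\bx))=\sum_{i=1}^r \lambda_i' \mathbf{A}_i \mathbf{A}_i\T+\sum_{j=1}^\ell \nu_j \mathbf{B}_j \mathbf{B}_j\T$, linear independence of $\{\mathbf{A}_i,\mathbf{B}_j\}$ makes $\rank M=r+\ell$, while $M-\lambda \mathbf{A}_i \mathbf{A}_i\T$ has rank $r+\ell-1$ for exactly one value $\lambda=\lambda_i'$ (where the $i$-th term vanishes and its column direction leaves the span) and rank $r+\ell$ otherwise, using that $\lambda_i',\nu_j\neq 0$ generically. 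This determines $\lambda_i'$ uniquely with no knowledge of the $\fb_j$, and is exactly the deflation step of the subspace power method. Subtracting then yields the salient tensor $\sum_{j=1}^\ell \nu_j \fb_j^{\otimes 4}$ uniquely.

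\textbf{Step 3 (recover $B$) and the main obstacle.}
The salient tensor again has flattening of rank $\ell$ by independence of the $\mathbf{B}_j$, so I would apply Lemma~\ref{lem:rankq} with $q=\ell$ to conclude its symmetric decomposition is unique; hence $\fb_1,\ldots,\fb_\ell$ are uniquely determined, this being the decomposition that HTD is designed to compute. The admissible range for $\ell$ is checked exactly as for $r$ in Step 1. I expect the main obstacle to be Step 2: making rigorous that $\lambda_i'$ is fixed purely by $\kappa_4(\bx)$ and the $\fa_i$, before $B$ is known, since this is what decouples the identifiability of the shared block from the salient block and explains the appearance of $r+\ell\leq\binom{p+1}{2}$. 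A secondary point demanding care is the arithmetic reconciling $\binom{p+1}{2}$ with the generic-rank and uniqueness thresholds of Lemma~\ref{lem:rankq} across the small cases $p\in\{2,3,4,5,6\}$, in particular the exclusion $r,\ell\neq 8$ when $p=4$, where rank-$8$ tensors admit two decompositions.
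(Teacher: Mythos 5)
Your proof is correct and follows essentially the same route as the paper's: Lemma~\ref{lem:rankq} with $q=r$ and $q=\ell$ handles Steps 1 and 3, and the coefficient $\lambda_i'$ in Step 2 is pinned down by a rank-drop argument whose validity reduces to the generic linear independence of $\fa_1^{\otimes 2},\ldots,\fa_r^{\otimes 2},\fb_1^{\otimes 2},\ldots,\fb_\ell^{\otimes 2}$, which is exactly where $r+\ell\leq\binom{p+1}{2}$ enters. The only cosmetic difference is that you argue the rank drop directly from linear independence where the paper invokes Lemma~\ref{lem:unique_C}, and you spell out the small-$p$ arithmetic more explicitly.
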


To prove Proposition~\ref{prop:iden_alg2} we use the following linear algebra result. See \cite[Lemma B.1]{kileel2019subspace} for a proof.

\begin{lemma}
\label{lem:unique_C}
Let \(M \in \RR^{n\times n}, U \in \RR^{n\times k}\) and \(V \in \RR^{n\times k}\) be full-rank matrices with \(k\leq n\). Let \(C^\ast=(V\T M^{-1}U)^\dagger\), where \(\dagger\) denotes the pseudo-inverse, and \(d=\rank (C^\ast)\). Then 
\[\rank (M-UCV\T )\geq n-d,\] with equality if and only if \(C=C^\ast\).
\end{lemma}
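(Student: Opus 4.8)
The plan is to establish identifiability one stage at a time, following the three steps of Algorithm~\ref{alg:generic a b}, and to argue that the output of each stage is pinned down uniquely by $\kappa_4(\by)$ and $\kappa_4(\bx)$ under the stated bound. At the outset I would record the genericity conditions that hold almost surely: linear independence of the symmetric matrices $\fa_i^{\otimes 2}$ and $\fb_j^{\otimes 2}$, distinctness and nonvanishing of the cumulants $\lambda_i,\lambda_i',\nu_j$, and invertibility of the relevant restricted flattening. I would also note that, since $r,\ell\geq 1$, the hypothesis $r+\ell\leq\binom{p+1}{2}$ forces $r\leq\binom{p+1}{2}-1$ and $\ell\leq\binom{p+1}{2}-1$, placing both $r$ and $\ell$ inside the uniqueness range of Lemma~\ref{lem:rankq} (the exclusions $r,\ell\neq 8$ in the $p=4$ case account for the defective quaternary quartics there).

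Steps 1 and 3 are then direct. In Step 1 the tensor $\kappa_4(\by)=\sum_{i=1}^r\lambda_i\fa_i^{\otimes 4}$ is symmetric of rank $r$, so its decomposition is unique by Lemma~\ref{lem:rankq}; the subspace power method realizes this decomposition and recovers $\fa_1,\ldots,\fa_r$ up to the unavoidable permutation and scaling. In Step 3 the residual $R=\sum_{j=1}^\ell\nu_j\fb_j^{\otimes 4}$ is symmetric of rank $\ell$, so its decomposition is likewise unique by Lemma~\ref{lem:rankq}, and the vectors $\fb_1,\ldots,\fb_\ell$ are thereby uniquely determined as the only rank-$\ell$ decomposition of $R$. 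Here I would additionally observe that $\Mat(R)$ has rank exactly $\ell$, since $\fb_1^{\otimes 2},\ldots,\fb_\ell^{\otimes 2}$ are linearly independent; this is the precondition under which HTD (Algorithm~\ref{alg:hierarchical}) returns $\ell$ nonzero summands, as in Theorem~\ref{thm:nearly_orthogonal}.

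The crux is Step 2, the unique recovery of $\lambda_1',\ldots,\lambda_r'$, for which I would invoke Lemma~\ref{lem:unique_C}. Write $M=\Mat(\kappa_4(\bx))$ and let $\mathbf{A}_i,\mathbf{B}_j$ be the vectorizations of $\fa_i^{\otimes 2},\fb_j^{\otimes 2}$, so that $M=\sum_{i=1}^r\lambda_i'\mathbf{A}_i\mathbf{A}_i\T+\sum_{j=1}^\ell\nu_j\mathbf{B}_j\mathbf{B}_j\T$. Because $r+\ell\leq\binom{p+1}{2}$, the vectors $\{\mathbf{A}_i\}\cup\{\mathbf{B}_j\}$ are generically linearly independent, so the column space $\Ucal$ of $M$ has dimension $r+\ell$, contains every $\mathbf{A}_i$, and $M|_{\Ucal}$ is invertible. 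I would apply Lemma~\ref{lem:unique_C} with $n=r+\ell$, with $U=V$ the matrix of the $\mathbf{A}_i$ in a basis of $\Ucal$, and $k=r$. Choosing $C=\operatorname{diag}(\lambda_1',\ldots,\lambda_r')$ yields $M|_{\Ucal}-UCU\T=\sum_{j=1}^\ell\nu_j\mathbf{B}_j\mathbf{B}_j\T$ of rank $\ell$. Since the lemma gives $\rank(M|_{\Ucal}-UCU\T)\geq(r+\ell)-\rank(C^\ast)\geq\ell$ (using $\rank(C^\ast)\leq r$), the diagonal choice of rank $\ell$ attains this minimum; by the equality clause this forces $\rank(C^\ast)=r$ and $\operatorname{diag}(\lambda_1',\ldots,\lambda_r')=C^\ast=(U\T (M|_{\Ucal})^{-1}U)^\dagger$. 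As $C^\ast$ depends only on $M$ and the known $\fa_i$, each $\lambda_i'$ is uniquely determined, and hence so is the residual $R$ fed into Step 3.

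I expect the main obstacle to be the bookkeeping in Step 2: verifying that every $\mathbf{A}_i$ genuinely lies in the column space of $M$ and that $M|_{\Ucal}$ is invertible (both generic consequences of the independence of $\{\mathbf{A}_i\}\cup\{\mathbf{B}_j\}$ and the nonvanishing of $\lambda_i',\nu_j$), and, most delicately, extracting from the equality clause of Lemma~\ref{lem:unique_C} that no non-diagonal competitor $C$ can also reach rank $\ell$. The conceptual point I would stress is that the argument never requires the full rank-$(r+\ell)$ decomposition of $\kappa_4(\bx)$ to be unique: by decomposing $\kappa_4(\by)$ first and then deflating, identifiability persists even when $r+\ell$ approaches the generic rank of a quartic in $p$ variables, so the binding constraint is the linear-independence bound $r+\ell\leq\binom{p+1}{2}$ rather than the stronger uniqueness bound of Proposition~\ref{prop:iden_cica_tensor}.
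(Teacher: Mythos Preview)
Your proposal does not address the stated lemma at all. Lemma~\ref{lem:unique_C} is a self-contained linear-algebra fact: for full-rank $M\in\RR^{n\times n}$ and $U,V\in\RR^{n\times k}$, the rank of $M-UCV\T$ is minimized uniquely at $C=C^\ast=(V\T M^{-1}U)^\dagger$. A proof of this must work with arbitrary such $M,U,V$ and an arbitrary $C\in\RR^{k\times k}$, showing both the lower bound $\rank(M-UCV\T)\geq n-d$ and the characterization of equality. What you have written is instead a proof of Proposition~\ref{prop:iden_alg2} (identifiability of Algorithm~\ref{alg:generic a b}), in which Lemma~\ref{lem:unique_C} is \emph{invoked} as a black box at the ``crux'' of Step~2. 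Nowhere do you establish the lemma itself; you simply apply it.

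For the record, the paper does not prove Lemma~\ref{lem:unique_C} either: it refers the reader to \cite[Lemma~B.1]{kileel2019subspace}. If you wish to supply a proof, one standard route is to write $M-UCV\T = M(I - M^{-1}UCV\T)$ and use the Sylvester determinant identity (or a rank argument on $I_n - M^{-1}UCV\T$ versus $I_k - CV\T M^{-1}U$) to reduce to the $k\times k$ problem of minimizing $\rank(I_k - C(V\T M^{-1}U))$; the minimum rank is $k-d$ where $d=\rank(V\T M^{-1}U)$, attained exactly when $C$ is the pseudo-inverse. Your current write-up, while a reasonable sketch of Proposition~\ref{prop:iden_alg2}, contains no such argument.
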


\begin{proof}[Proof of Proposition~\ref{prop:iden_alg2}]
Tensors $\sum_{i=1}^r \lambda_i \mathbf{a}_i^{\otimes 4}$ and $\sum_{j=1}^\ell \nu_j \mathbf{b}_j^{\otimes 4}$ have generic rank $r$ and rank $\ell$, respectively. So, the identifiability of Steps 1 and 3 of Algorithm~\ref{alg:generic a b} hold if 
$r,\ell<\lceil \frac{1}{p}{p+3\choose 4}  \rceil$ for $p\notin \{ 3,4,5 \}$ or $r,\ell\leq \lceil \frac{1}{p}{p+3\choose 4} \rceil$ for $p \in \{ 3,5 \}$ or $r,\ell\leq 9, r,\ell\neq 8$ for $p=4$, setting $q = r$ and $q = \ell$ in Lemma~\ref{lem:rankq}.

It remains to consider Step 2, learning the coefficients $\lambda_i'$ of $\mathbf{a}_i^{\otimes 4}$ in $\kappa_4(\mathbf{x})$.
The flattening of $\kappa_4(\bx)$
has the form 
$M= \sum_{i=1}^r \lambda_i' \mathbf{A}_i^{\otimes 2}+ \sum_{j=1}^\ell \nu_j \mathbf{B}_j^{\otimes 2} \in \RR^{p^2 \times p^2}$, where  
$\mathbf{A}_i, \mathbf{B}_j \in \RR^{p^2}$ vectorize $\fa_i^{\otimes 2}$ and $\fb_j^{\otimes 2}$, respectively. 
The scalar $\lambda_i'$ is unique if $\rank(M-\lambda_i' \mathbf{A}_i\otimes \mathbf{A}_i) = \rank (M) -1$, 
by Lemma~\ref{lem:unique_C}. It is $((\mathbf{A}_i\T V) D^{-1} (\mathbf{A}_i\T V)\T)^{-1}$, where $VDV\T$ is the thin eigendecomposition of $M$. In particular, the coefficient $\lambda_i'$ is unique when 
\[ \fa_i^{\otimes 2}\notin \Span( \{ \fa_1^{\otimes 2},\ldots, \fa_{i-1}^{\otimes 2}, \fa_{i+1}^{\otimes 2}, \fa_r^{\otimes 2},\fb_1^{\otimes 2},\ldots,\fb_\ell^{\otimes 2} \}). \] 
For generic $\fa_i$ and $\fb_j$, this holds provided $r+\ell$ is at most ${p+1\choose 2}$, the dimension of the space of $p\times p$ symmetric matrices.
Inequalities ${p+1 \choose 2}\leq \lceil \frac{1}{p}{p+3\choose 4} \rceil$ for $p\notin \{ 3,4,5 \}$ and ${p+1 \choose 2}\leq \lceil \frac{1}{p}{p+3\choose 4} \rceil+1$ for $p \in \{ 3,4,5 \}$ hold. Combining the above conditions, Algorithm~\ref{alg:generic a b} is identifiable when $r+\ell \leq {p+1 \choose 2}$ for $p\neq 4$ and $r+\ell \leq 9, r,\ell \neq 8$ for $p=4$.
\end{proof}

In some settings, we assume that the vectors $\fb_1,\ldots,\fb_\ell$ are orthogonal.
In particular,~$\ell\leq p$. 
This assumption is natural for visualization purposes since the projection onto foreground patterns is orthogonal.
In this case, HTD gives an exact decomposition, by Proposition~\ref{prop:orthogonal_case}. 
The identifiability requirements are the same as in Propositions~\ref{prop:iden_cica_tensor} and \ref{prop:iden_alg2}, as follows. 
The identifiability conditions in the two propositions are unchanged under a change of basis by an invertible $p \times p$ matrix. 
When $\ell \leq p$, we can apply a change of basis to $\kappa_4(\bx)$ so that the vectors $\mathbf{b}_1,\ldots,\mathbf{b}_\ell$ become orthogonal. We apply the same change of basis to $\kappa_4(\by)$.

\subsection{cICA for dimensionality reduction}
\label{sec:ranking_bs}

Usual ICA has been used as a tool to project data, see~\cite{domino2018use,geng2020npsa,lim2008cumulant}.
We extend this to cICA.
In practice, the input to cICA consists of samples from the foreground $\bx$ and background $\by$. These samples comprise the foreground data $X \in \RR^{n \times p}$ and the background data $Y \in \RR^{m \times p}$, where $n$ and $m$ are the numbers of samples in the foreground and background datasets, respectively. We then construct the sample cumulants $\kappa_4(\bx)$ and $\kappa_4(\by)$ as follows. 

A dataset of $n$ samples in $\RR^p$ gives a data matrix $X \in \RR^{n \times p}$. Its fourth cumulant is computed as follows. Let $\bar{X}\in \RR^p$ denote the mean vector over all observations. The  $p \times p$ sample covariance matrix $\Sigma$ for $X$ has entries \( \sigma_{ij} = \frac{1}{n} \sum_{t=1}^{n} (X_{ti} - \bar{X}_i)(X_{tj} - \bar{X}_j)\). The fourth-order central sample moment is a $p \times p \times p \times p$ tensor with entries \( M_{ijkl} = \frac{1}{n} \sum_{t=1}^{n} (X_{ti} - \bar{X}_i)(X_{tj} - \bar{X}_j)(X_{tk} - \bar{X}_k)(X_{tl} - \bar{X}_l).\) Entry $(i,j,k,l)$ of the fourth-order sample cumulant is \( M_{ijkl} - \sigma_{ij}\sigma_{kl} - \sigma_{ik}\sigma_{jl} - \sigma_{il}\sigma_{jk}. \) If the data $X$ are samples from a distribution $\bx$, this sample cumulant approximates $\kappa_4(\bx)$. The computation for $\kappa_4(\by)$ is similar. 

When $p$ is large, forming the fourth cumulants may be prohibitively expensive. 
To get around this, one can reduce the dimension before forming the cumulants, as follows. 
We combine the foreground and background datasets to form a single dataset, a matrix of size $(m + n) \times p$. 
Let $U \in \RR^{p \times k}$ have as its columns the top $k$ principal components of this combined data. The background and foreground transformed variables are then
\begin{equation}
\label{eqn:cica_pca}
     U\T A \mathbf{z}\qquad \text{and} \qquad  U\T A \mathbf{z}' + U\T B \mathbf{s},
\end{equation}
respectively,
where $U\T A \in \RR^{k \times r}$ and $U\T B \in \RR^{k \times \ell}$.
The recovered foreground patterns from cICA are the columns of $U\T B$. 
The columns of $U U\T  B \in \RR^{p \times \ell}$ convert these projected foreground patterns back into the original space. 

In practice, for our data visualization in Section~\ref{sec:visualization}, 
we choose the number $k$ of PCA components to be 30 or the number of components that explains at least $90\%$ variance, whichever comes first.

We compute the mixing matrix $B \in \RR^{p \times \ell}$ with columns $\fb_1,\ldots,\fb_\ell$ using Algorithm~\ref{alg:generic a b}. 
 When employing cICA for dimensionality reduction, we project the foreground data $X$ onto $X B$. 
 For a two-dimensional plot, we plot the projections $(X\fb_i, X \fb_j)$ for a pair $i, j$. To select the most relevant vectors out of our $\ell$ recovered vectors $\fb_i \in \RR^\ell$, we order them by the ratio \begin{equation} \label{eqn:kb} k(\fb) := \frac{\fb\T \kappa_2(\bx) \fb}{\fb\T \kappa_2(\by)\fb}. \end{equation} We justify this ranking and interpret the axes of a cICA dimensionality reduction plot in Section \ref{app: practicalities} of the Appendix.

\subsection{Error Analysis for cICA}\label{sec:error analysis}

Suppose we are in the setting of cICA, where the foreground and background datasets are described by ICA models
$$
\by = A \mathbf{z}, \qquad \bx = A \mathbf{z}' + B \mathbf{s}
$$
and the population cumulant tensors are
$$
\kappa_4(\by) = \sum_{i=1}^r \lambda_i \fa_i^{\otimes 4}, \quad \kappa_4(\bx) = \sum_{i=1}^r \lambda_i' \fa_i^{\otimes 4} + \sum_{i=1}^\ell \nu_i \fb_i^{\otimes 4}.
$$
Let $\hat{\kappa}_4(\by), \hat{\kappa}_4(\bx)$ be the sample cumulant tensors for the two datasets. We prove the following upper bound on the error of estimating $\sum_{i=1}^\ell \nu_i \fb_i^{\otimes 4}$.

\begin{theorem}\label{thm: error for T}
Let
$T = \sum_{i=1}^\ell \nu_i \fb_i^{\otimes 4}$
and let $\hat{T}$ be the tensor obtained after Steps 1 and 2 
of Algorithm \ref{alg:generic a b} with input sample cumulant tensors $\hat{\kappa}_4(\bx), \hat{\kappa}_4(\by)$.
Let
$\rho~= \max_{i\neq j} |\langle \fa_i, \fa_j \rangle|, M_y =\Mat(\kappa_4(\by))
$
and 
$
\Delta_M = \| M_y - \Mat(\hat{\kappa_4}(\by)) \|_2. 
$
Let $\sigma_r(M_y)$ denote the $r$-th largest singular value of $M_y$.
Define
$$
\Delta_A = \frac{\Delta_M}{\sigma_r(M_y) - \Delta_M}, \quad \lambda = \min_i |\lambda_i|,\quad \lambda' = \lambda(1-(r-1)\rho).
$$
Under the assumptions that $(r-1)\rho= o(1)$,
that $\Delta_M < \frac{\lambda}{45} + O(\rho)$,
and moreover that $\max_i |\lambda_i'| \frac{2\sqrt{\Delta_A} + 3 \Delta_A}{\lambda'} = o(1)$,
we have
$$
\|\hat{T} - T\|_F \leq \|\hat{\kappa}_4(\bx) - \kappa_4(\bx)\|_F + \beta\sqrt{\Delta_M} + O(\Delta_M),
$$
where $\beta= \sum_{i=1}^r(|\lambda_i'|\sqrt{\frac 2 {\lambda'}} +  |\lambda_i'^2| 2 \lambda'^{-\frac 3 2}).$
\end{theorem}

\begin{proof}[Sketch Proof]
Let $\fa_i'$ be the estimate of $\fa_i$ obtained via Step 1 of Algorithm \ref{alg:generic a b}, and let $\mu_i$ be the estimate of $\lambda_i'$ via Step 2 of Algorithm \ref{alg:generic a b}.
Then $\|\hat{T} - T\|_F$ is at most
\begin{align}
 \|\hat{\kappa}_4(\bx) - \kappa_4(\bx)\|_F +\sum_{i=1}^r 2|\mu_i| \|\fa_i  - \fa_i' \| + \sum_{i=1}^r |\lambda_i' - \mu_i|,
 \label{eq: T That}
\end{align}
as can be shown using the triangle inequality and by comparing $\|\fa_i^{\otimes 4} - \fa_i'^{\otimes 4}\|$ and $\|\fa_i - \fa_i'\|$ for vectors $\fa_i, \fa_i'$. 
We will obtain bounds on the second and third terms in the sum~\eqref{eq: T That}.

The distances between the numbers $\frac{1}{\lambda_i'}, \frac{1}{\mu_i} $ and between the vectors $\|\fa_i - \fa_i'\|$ 
can be bounded
as
\begin{equation}
\|\fa_i - \fa_i'\| \leq \sqrt{\frac{\Delta_A}{2}},
\quad
\left| \frac{1}{\lambda_i'} - \frac{1}{\mu_i} \right| \leq \frac{2}{\lambda'} \sqrt{\Delta_A} + O(\Delta_A),
\label{eq: a,a', lambda mu}
\end{equation}
by applying results from the study of the optimization landscape of tensor decomposition \cite{kileel2021landscape}.
One can also show that 
\begin{equation}\sigma_r(M) \geq \lambda' = \lambda + O(\rho),\quad
\Delta_A = \frac{\Delta_M}{\lambda'} + O(\Delta_M^2),
\label{eq: delta m delta a}
\end{equation}
by relating $\sigma_r(M_y)$ to $\sigma_r(G_2)$, where $G_2\in \RR^{r\times r}$ is the matrix with $(i,j)$ entry $\langle \fa_i,\fa_j\rangle^2$ and relating
$\sigma_r(G_2)$ to $\rho$.
Substituting \eqref{eq: a,a', lambda mu} and \eqref{eq: delta m delta a} into \eqref{eq: T That}, we obtain the result. 
\end{proof}

We obtain the following end-to-end error bound for recovery of the foreground patterns and its coefficients via cICA, 
by combining Theorem \ref{thm:nearly_orthogonal}, Theorem \ref{thm: error for T}, and sample complexity results for cumulant tensors \cite{anandkumar2014sample}.

\begin{theorem}\label{thm: end to end error}
Suppose we have $N_1$ samples for the background dataset and $N_2$ samples for the foreground dataset.
We can shift and scale our latent variables $z_i,z_i',s_j$ for $i, i' \in [r], j \in [\ell]$, so we assume without loss of generality that 
\begin{itemize}
\item $\mathbb{E}[z_i] = \mathbb{E}[z_i'] = \mathbb{E}[s_j] = 0$,
\item $\mathbb{E}[z_i^2] = \mathbb{E}[z_i'^2] = \mathbb{E}[s_j^2] = 1$.
\end{itemize}
Assume moreover that
the fourth cumulants of $z_i, z_i', s_j$ are nonzero, and
that the variables $z_i, z_i', s_j$ are sub-Gaussian.
Suppose $\fc_i$ 
are the output patterns of the cICA algorithm, with corresponding recovered scalars $\mu_i$, obtained from the tensor of foreground patterns~$T = \sum_{i=1}^\ell\nu_i \fb_i^{\otimes 4}$.
Under the assumptions of Theorem \ref{thm:nearly_orthogonal} and Theorem \ref{thm: error for T}, 
we have
$$
|\nu_i - \mu_i| \leq O(\epsilon^2) + \widetilde{O}(\delta),
$$
$$
\min\{\|\fb_i - \fc_i\|, \|\fb_i + \fc_i\|\} \leq O( \epsilon^2) + \widetilde{O}(\delta)
$$
where
$$
|\langle \fb_i, \fb_j \rangle| \leq \epsilon \quad \text{for all } i\neq j,
$$
$$
\delta =  \widetilde{O}\left( \frac{p^\frac{3}{2} \ell'^2}{N_2} + \sqrt{ \frac{\ell'^4}{N_2} } + \sqrt{\frac{pr'^2}{N_1} + \sqrt{ \frac{r'^4}{p N_1} }}  \right),
$$
$r' = \max\{r,p\}, \ell' = \max\{\ell,p\}$, and
$\widetilde{O}$ absorbs polylog terms.
\end{theorem}

\begin{remark}
The $O(\epsilon^2)$ term in Theorem~\ref{thm: end to end error} captures model mismatch from the non-orthogonality of the true components. The $\widetilde{O}(\delta)$ term is error due to finite sample estimation of foreground patterns. 
Assuming $r$ and $\ell$ are $O(p)$, the $\widetilde{O}(\delta)$ term scales as
$$
\widetilde{O}\left( \frac{p^\frac{7}{2}}{N_2} + \sqrt{ \frac{p^4}{N_2} } + \sqrt{\frac{p^3}{N_1} + \sqrt{ \frac{p^3}{N_1} }}  \right).
$$
We thus obtain a constant accuracy guarantee for recovering the foreground patterns and their coefficients
if the background and foreground sample sizes satisfy
\[
N_1=\widetilde{O}(p^3), \qquad N_2=\widetilde{O}(p^4).
\]
These sample size requirements are beyond the optimal \(O(p^2)\) sample complexity achievable by polynomial-time methods in~\cite{auddy2025large}. 
The gap is due to two steps in our analysis
that introduce dimension-dependent factors: (i) bounding the spectral norm of \(\hat\kappa_4({\by})-\kappa_4(\by)\) by that of its flattening, and (ii) converting between spectral and Frobenius norms for \(\hat\kappa_4({\bx})-\kappa_4(\bx)\).

An interesting direction for future work is to improve the sample efficiency, for instance using the structure of the tensors $\hat\kappa_4({\by})-\kappa_4(\by)$ and $ \hat\kappa_4({\bx})-\kappa_4(\bx)$, by pre-whitening the data, or by decomposing the stacked foreground and background cumulant tensors as a single tensor of size $p \times p \times p \times p \times 2$ to avoid the three-step procedure.
\end{remark}

\section{Numerical experiments}\label{sec:numerical result}

We compare Algorithm \ref{alg:generic a b} with other tensor decompositions and ICA methods to illustrate the necessity of HTD  (Section~\ref{sec:SPM HTD best}).
We investigate the performance of cICA for finding patterns in data (Section~\ref{sec:patterns}) and for data visualization (Section~\ref{sec:visualization}). 
Our code is available on GitHub at 
\url{https://github.com/QWE123665/cICA}.

\subsection{Choices of Methods in Algorithm \ref{alg:generic a b}}\label{sec:SPM HTD best}

We evaluate Algorithm~\ref{alg:generic a b}. We compare our method (SPM-HTD) against several alternatives involving SPM \cite{kileel2019subspace}, HTD (Algorithm~\ref{alg:hierarchical}), FastICA \cite{hyvarinen1999fast}, FOOBI \cite{de2007fourth}, and JADE \cite{cardoso1993blind}. The evaluated combinations include SPM-HTD, HTD-HTD, SPM-SPM, SPM-JADE, JADE-HTD, JADE-JADE, FOOBI-HTD, SPM-FOOBI, FOOBI-FOOBI, and FastICA-HTD.  %

Our setup has
three backgroud patterns and two foreground patterns.
The background patterns are three independent uniform random variables. The foreground patterns are two mixtures of beta distributions $0.5 B(2,5) + 0.5 B(5,4)$.
The foreground mixing matrix $B\in \RR^{5\times 2}$ consists of the last two columns of the identity matrix $I_5$. The background mixing matrix $A\in \RR^{5\times 3}$ is randomly generated and adjusted to ensure small inner products with columns of $B$.

We generate foreground and background datasets, each with 200 samples. Their projections to the leading two principal components are the first two subplots of Figure~\ref{fig:four clusters}. Projecting the foreground dataset via matrix 
$B$ reveals four distinct clusters, see the top-right subplot of Figure~\ref{fig:four clusters}.

We illustrate the performance of our algorithm SPM-HTD and the variants SPM-SPM, HTD-HTD in the second row of Figure~\ref{fig:four clusters}. SPM-HTD is the only method of the three to recover the four clusters. 
The performance of the other competing methods is in Section \ref{app: algorithm choice} of the Appendix. All methods that find the four clusters use an ICA or tensor decomposition method in Step 1 and HTD in Step 3.

\begin{figure}
    \centering
    \includegraphics[width=0.8\linewidth]{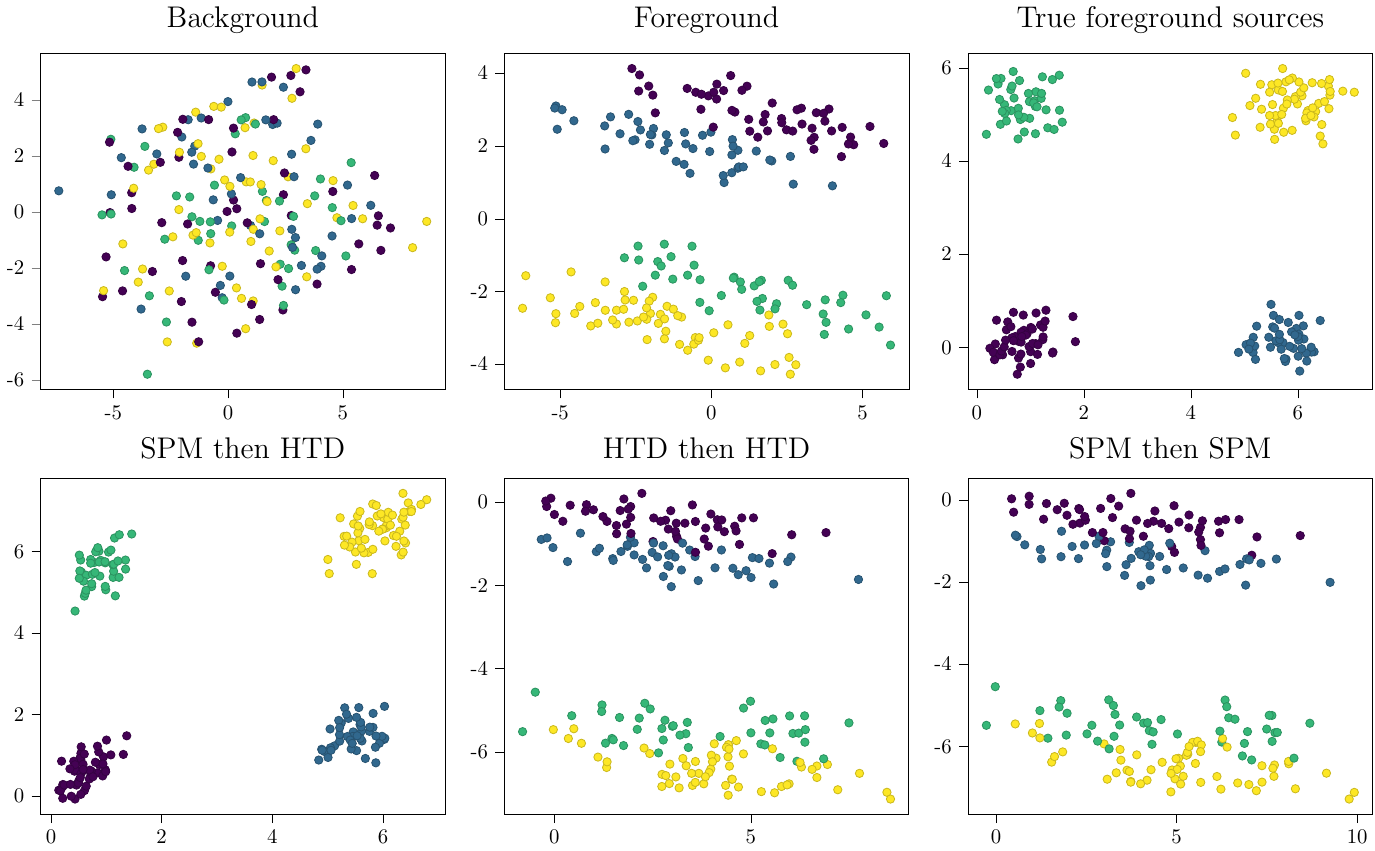}
    \caption{
  We compare our algorithm SPM-HTD against other ICA and tensor decomposition methods in a synthetic setting to to justify our algorithmic choices. 
  The top-left and top-middle subplots illustrate the background and foreground datasets, each consisting of 200 samples in $\mathbb{R}^5$, projected onto their two leading principal components. 
The top-right subplot shows the foreground dataset projected onto the true foreground mixing matrix $B\in\mathbb{R}^{5\times 2}$, revealing four clusters. 
In the bottom row, we compare our algorithm (SPM-HTD) with applying HTD in both Steps 1 and 3, and applying SPM in both steps. 
Only our method (SPM-HTD, bottom-left) recovers the four clusters.}
    \label{fig:four clusters}
\end{figure}

We vary the sample size of both datasets from 100 to 1000. 
For each sample size, we repeat the experiment 20 times by randomly drawing datasets, applying all~11 methods to estimate the matrix $B$, and computing the silhouette score on the foreground data projected via the estimated $B$. A higher silhouette score indicates better recovery of the four clusters. To mitigate randomness, we record the best silhouette score from 20 independent runs for each method and then average these across experiments.

Figure~\ref{fig:SPM-HTD} compares silhouette scores for methods that apply an ICA or tensor decomposition approach in the first step followed by HTD (JADE-HTD, SPM-HTD, FOOBI-HTD, FastICA-HTD) to methods that do not use HTD in the third step.
It shows that methods using tensor decomposition or an ICA approach followed by HTD achieve superior silhouette scores, highlighting the importance of HTD in Step~3.
The HTD-HTD method underperforms approaches combining another tensor decomposition method with HTD, revealing the necessity of an accurate decomposition in Step 1. 
The best choice in Step 1 cycles between FOOBI, FastICA and SPM. We choose SPM for compatibility with Step 2. FastICA does not directly process cumulant tensors, making it unsuitable for Step 2.

\begin{figure}
    \centering
    \includegraphics[scale = 0.6]{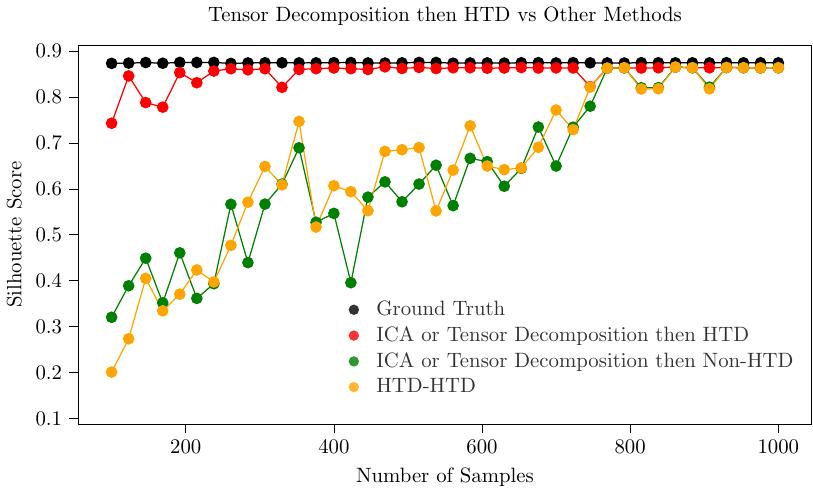}
    \caption{We study the accuracy of different approaches to cICA as the number of samples varies. We compare methods using an ICA or tensor decomposition method followed by HTD (red), ICA or tensor decomposition methods followed by non-HTD alternatives (green) and HTD-HTD (yellow). Performance is evaluated using the silhouette score, which measures how effectively the estimated matrix recovers the four clusters shown in the top-right plot of Figure~\ref{fig:four clusters}.
Methods using ICA or tensor decomposition method followed by HTD outperform both non-HTD approaches and the HTD-HTD combination. This justifies our decision to use SPM in Step 1 and HTD in Step 3 of our cICA algorithm.}
    \label{fig:SPM-HTD}
\end{figure}

\subsection{Salient patterns}
\label{sec:patterns}

The cICA patterns are the foreground vectors $\fb_i$. We investigate the interpretability of the cICA patterns on synthetic, semi-synthetic, and real-world datasets.
We demonstrate that cICA recovers foreground patterns accurately for synthetic data, with comparisons to cPCA~\cite{abid2017contrastive} and PCPCA~\cite{li2020probabilistic}. 
Our semi-synthetic setup has background dataset consisting of images of grass and clouds from \cite{deng2009imagenet}. The foreground dataset consists of digits $0$ and $1$ superimposed, with varying intensity, onto images of grass and clouds. We find that, unlike other methods, cICA is able to recover as top two foreground patterns the digits~$0$ and $1$.
Additionally, we apply cICA to gene expression data from \cite{suresh2023comparative}, using monkey gene expression as the background and human gene expression as the foreground.
We compare the cICA foreground patterns to results to identify genes responsible for human evolution.

\subsubsection{Synthetic data}
\label{sec:simulated_patterns}

We use synthetic data to assess the accuracy of the patterns recovered by cICA. 
We compare against cPCA and PCPCA, illustrating that cICA algorithms recover the foreground patterns more accurately when generated under a model~\eqref{eqn:cica} that assumes independence of latent variables, see Figure~\ref{fig:general setting}. The details of the simulations are in Section \ref{app: synthetic} of the Appendix.

\begin{figure}[htb]
\centering
\includegraphics{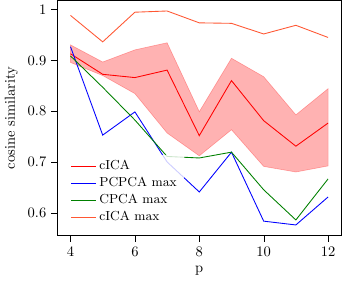}
\includegraphics{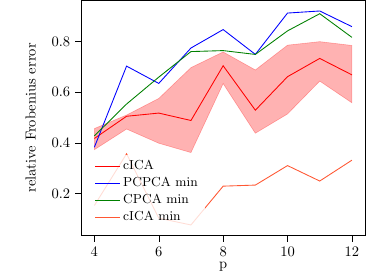}
\caption{
The similarity of the recovered vs. true foreground patterns (i.e. the accuracy of recovering matrix \(B\)), measured via 
cosine similarity (top)
and relative Frobenius error (bottom), via cICA in Algorithm~\ref{alg:generic a b}. The interquartile range over 100 runs is shaded in red, with the best run shown as the red line. 
For cPCA and PCPCA, we 
test 100 hyperparameter values and plot the one with the lowest error. 
}
\label{fig:general setting}
\end{figure}

We see from Figure~\ref{fig:general setting} that cICA outperforms cPCA and PCPCA in recovering the foreground patterns.
Figure~\ref{fig:general setting}(top) shows that the interquartile range for cICA in Algorithm~\ref{alg:generic a b} is above the maximum cosine similarity results for cPCA and PCPCA. The best performing cICA has cosine similarity above 0.9 for all tested $p$. 
Figure~\ref{fig:general setting}(bottom) shows analogous results with accuracy measured via relative Frobenius norm. 
The variability as $p$ changes is due to randomness in the matrix $A$.
The method outperforms cPCA and PCPCA, with the added benefit that no selection of hyperparameters is necessary.

\subsubsection{Corrupted MNIST dataset with continuous strength}
\label{sec:mixed corrupted mnist}
We superimpose hand-written digits $0$ and $1$ from MNIST \cite{deng2012mnist} onto grass and cloud images from \cite{deng2009imagenet}. The background dataset consists of 5000 cloud images and 5000 grass images. 
For the foreground dataset, we sample 8000 grass and 2000 cloud images.
Next, we sample 10000 pairs of images of digits $0$ and $1$ and superimpose them on the foreground grass and cloud images with independent strength following $\text{Uniform}[0,1]$. 
Digits 0 and 1 images are expected to be the foreground patterns. The background patterns come from decomposing grass and cloud images and the ratios of grass and cloud images in the background versus foreground reflects that the coefficient of the background signals may not be proportional, which often happens in reality.
That is, the foreground-to-background ratio $\lambda_i'/\lambda_i$ from equation \eqref{eqn:tensor_decomp} would be  
$0.4 = 2000/5000$ for a patterns in the clouds and $1.6=8000/5000$ for a pattern in the grass.
Samples of the foreground and background images are shown in Figure \ref{fig: MNIST foreground continuous}.

\begin{figure}[htbp]
    \centering
    \includegraphics[scale=0.2]{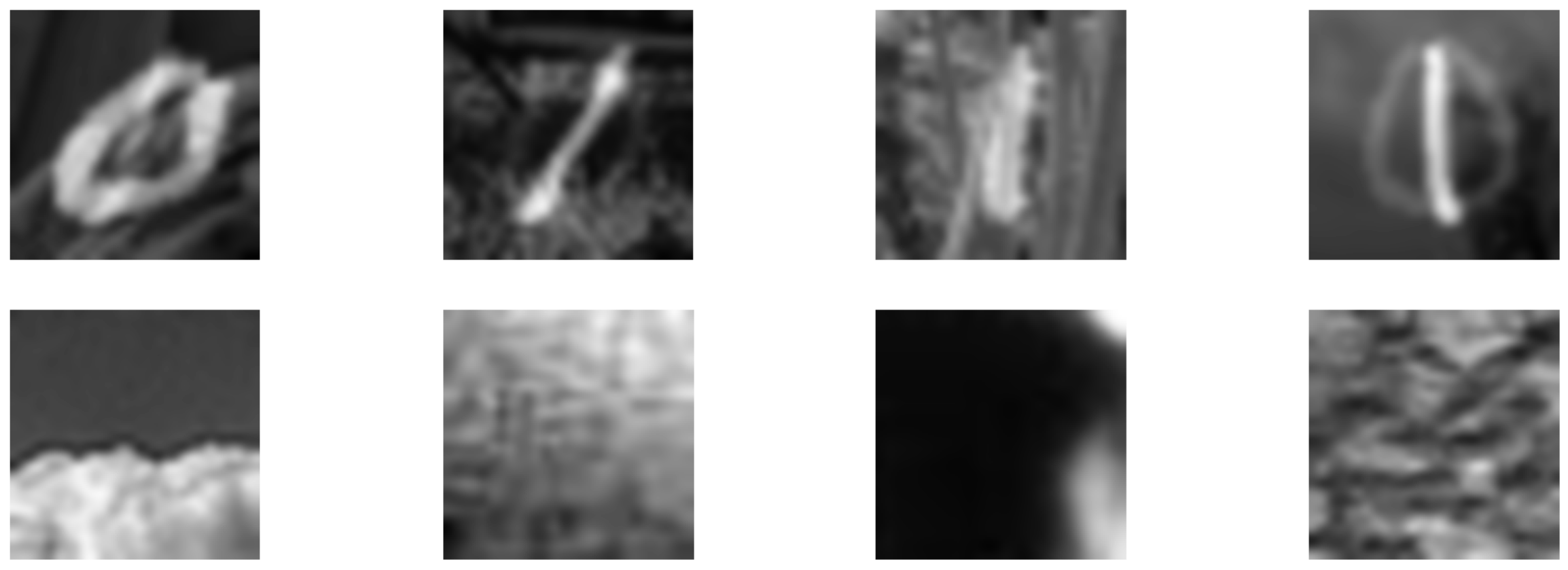}
    \caption{
    Foreground (top) and background images (bottom) for the corrupted MNIST dataset.
    } 
    \label{fig: MNIST foreground continuous}
\end{figure}

To interpret the cICA patterns, we plot the vectors as grayscale images. 
We expect the images from the top two cICA patterns to look like 0 and 1. 
We also plot the top two images for cPCA and PCPCA for comparison, see Figure \ref{fig:patterns as images}. 
The cICA images most closely resemble the images obtained from averaging the sampled digits $0$ and $1$ images. 
In the other methods, one component is a combination of $0$ and $1$.
For details, see Section~\ref{app:corrupted MNIST continuous} of the Appendix.

\begin{figure}[htbp]
    \centering
    \includegraphics[width = 0.22\linewidth]{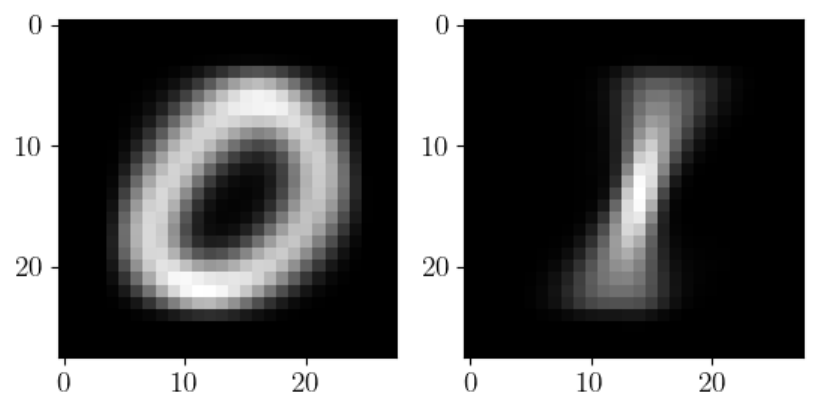} \quad 
    \includegraphics[width=0.22\linewidth]{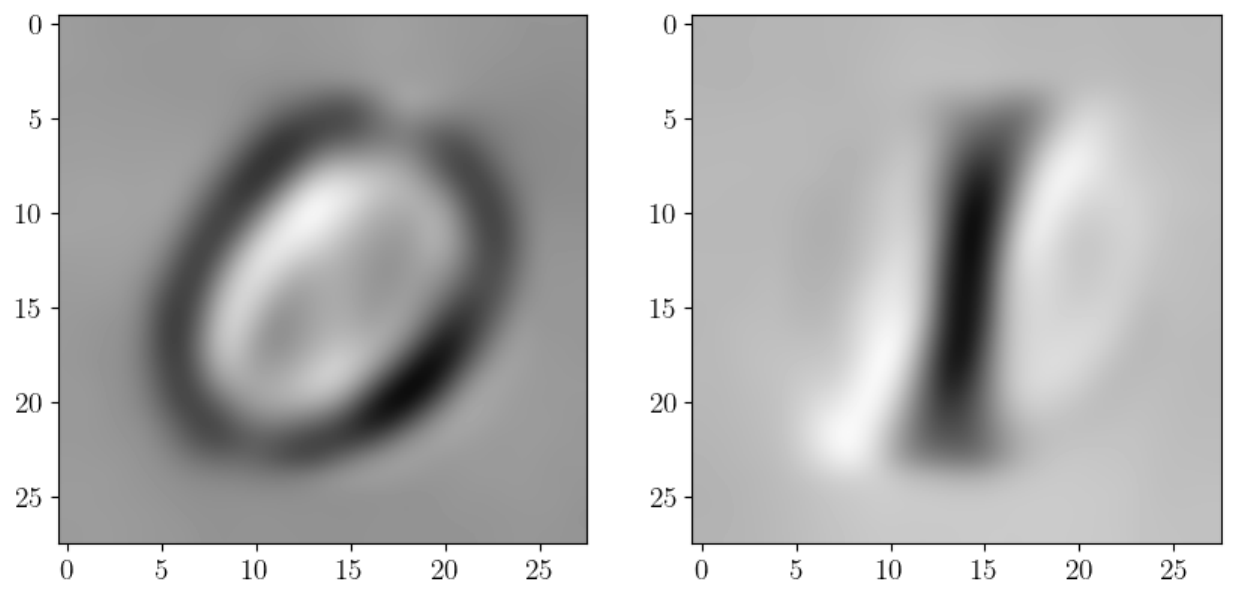}  \quad 
    \includegraphics[width = 0.22\linewidth]{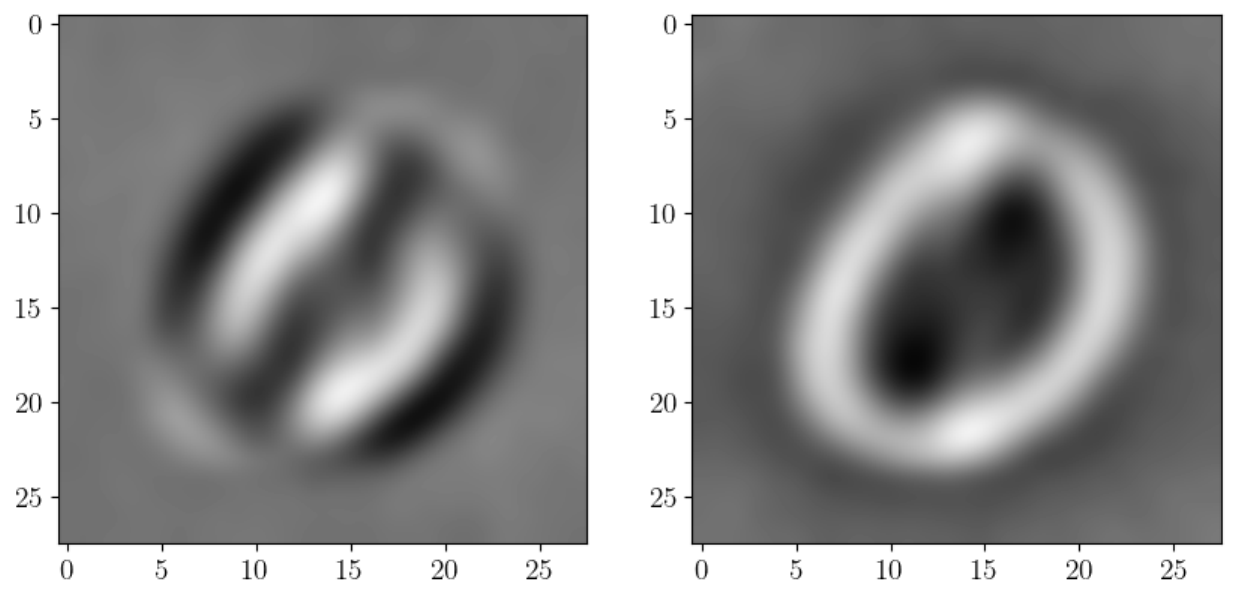} \quad 
    \includegraphics[width=0.22\linewidth]{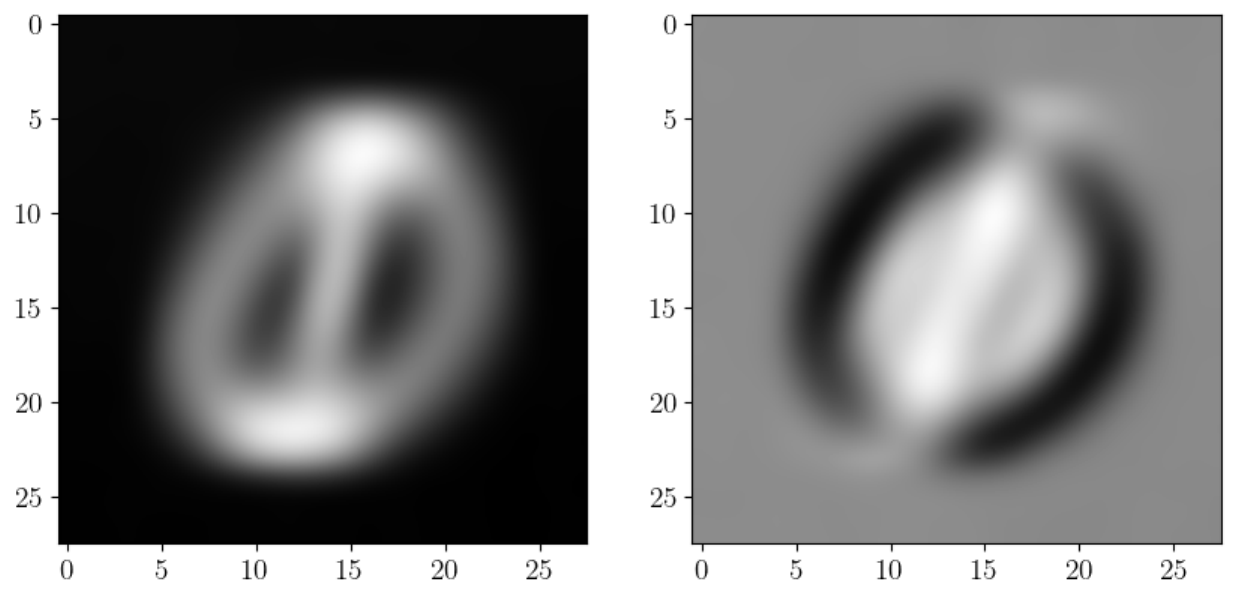}
    \caption{Average images for digits 0 and 1 (first two images). 
    Patterns recovered for cICA (second two), cPCA (third two) and PCPCA (fourth two).}
    \label{fig:patterns as images}
\end{figure}

\subsubsection{Human and monkey gene expression data}
\label{sec:human_and_monkey}

We apply cICA to a dataset of human and monkey gene expression from~\cite{suresh2023comparative}, in which the authors analyze human, chimp, gorilla, macaque, and marmoset datasets to identify genes that are responsible for evolutionary change. Out of 14131 genes, they identify 3383 genes with extensive differences between human and non-human primates, of which they identify a subset of 139 with deeply conserved co-expression across all non-human animals, and strongly divergent co-expression relationships in humans. 

The idea is that the foreground patterns 
should be gene modules (considered as linear combinations of genes) that contribute to the human dataset but not the monkey dataset. By analogy to the MNIST dataset in the previous subsection, the foreground gene modules correspond to the digits 0 and 1. We evaluate the quality of the foreground patterns by testing its consistency with \cite{suresh2023comparative}.

We select the 15 most variable genes among the 139 selected genes and the 15 most variable genes among the other $3244 = 3383 - 139$ genes. 
We combine 10000 chimp and 10000 gorilla data points to form the background dataset $Y \in \RR^{20000 \times 30}$ and 10000 human gene expression data points for the foreground dataset $X \in \RR^{10000 \times 30}$.
Then we apply cICA as in Algorithm~\ref{alg:generic a b} and use~\eqref{eqn:kb} to order the $\fb_i$ and extract the first two vectors $\fb_1, \fb_2 \in \RR^{30}$.
We observe
that the 15 genes with the highest absolute values in $\fb_1$ (resp. $\fb_2$) have 10 (resp. 13) genes among the 15 selected genes that come from the subset of 139 in~\cite{suresh2023comparative}. 
This demonstrates consistency with the results from~\cite{suresh2023comparative}: the vectors $\fb_i$ assign higher weights to the genes from the subset of 139. In comparison, cPCA identifies 9 and 10 genes in its first two patterns and PCPCA identifies 10 and 11 genes.

We also report the number of genes misclassified by the methods, the size of the intersection of the 
$3244 = 3383 - 139$ evolution-irrelevant genes with the
two sets of 15 genes in the foreground patterns (those with largest absolute values for $\fb_1,\fb_2$).
The result can be found in Table~\ref{table: monkey human}. 
We see that cICA outperforms the other methods, with more recovered genes and fewer misclassified genes.
The details of the experiments are in Section \ref{app:monkey_and_human} of the Appendix.

\begin{table}[htbp]
\centering
\begin{tabular}{|c|c|c|c|}
\hline
 method   & $\#$ misclassified genes\\
 \hline
cICA   &5 \\
\hline
ICA  & 7 \\
\hline 
PCPCA & 7\\
\hline
cPCA & 9\\
\hline 
PCA & 9\\
\hline
\end{tabular}

\caption{Number of genes misclassified for the human-monkey gene expression data.}
\label{table: monkey human}
\end{table}

\subsection{Dimensionality reduction}
\label{sec:visualization}

We use cICA for dimensionality reduction and data visualization, as described in Section~\ref{sec:ranking_bs}.
We investigate the performance of cICA on two datasets: mouse protein expression and corrupted MNIST images with discrete strength. Additional numerical experiments on transplant gene expression data are in Section~\ref{app: RNA} of the Appendix.
We quantify the performance of the methods using the silhouette score~\cite{rousseeuw1987silhouettes} of the projected data; 
higher values indicate better clustering of points.

\subsubsection{Mouse protein data}

We study the mouse protein dataset from \cite{higuera2015self}. The foreground data measure protein expression in the cortex of mice subjected to shock therapy, some of whom have Down syndrome. The
background dataset consists of protein expression measurements from mice without
Down Syndrome who did not receive shock therapy. We compare cICA, ICA, as well as cPCA and PCPCA. 
All four algorithms can separate the two clusters in the foreground data, corresponding to mice with Down syndrome and those without, though the projections differ: cICA has the highest Silhouette score (0.606), followed by ICA (0.604), then cPCA (0.421), and then PCPCA (0.220), see Figure~\ref{fig:mice_data}. 
We consider the absolute values of the foreground-to-background cumulant ratios $|\lambda'_i/\lambda_i|$, for $\lambda_i,\lambda_i'$ defined in equation \eqref{eqn:tensor_decomp}.
For $\fa_1, \ldots, \fa_r$, these range from $1.3 \times 10^{-4}$ to $0.12$. 
Moreover, the foreground cumulants for $\fa_1,\ldots,\fa_r$ are in the range [0.1,30] while the foreground cumulants for $\fb_1,\ldots,\fb_5$ are much larger (in the range [200,10000]).
This implies that the background patterns are not obvious in the foreground dataset $X$ and explains the small difference between the experimental results for cICA and ICA.
See Section~\ref{app:mouse} of the Appendix for details.

\begin{figure}[htbp]
    \centering
    \subfigure[]{\includegraphics[scale = 0.61]{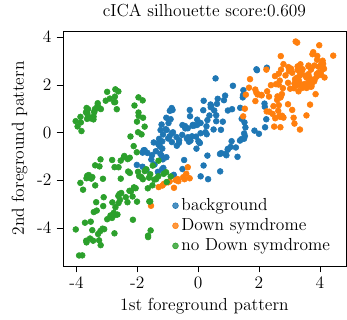} }
    \subfigure[]{\includegraphics[scale = 0.61]{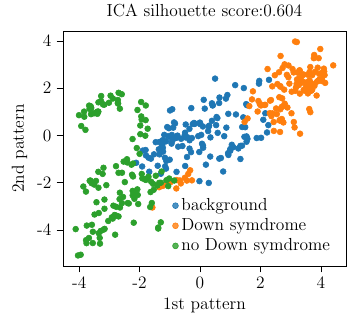} }
    \subfigure[]{\includegraphics[scale = 0.61]{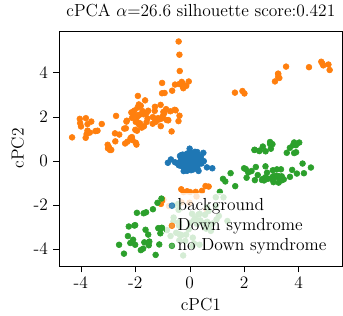}}
    \subfigure[]{\includegraphics[scale = 0.61]{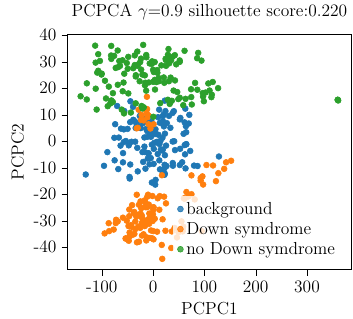}}
    \caption{Dimensionality reduction of the mouse protein data~\cite{higuera2015self} via (a) cICA (b) ICA (c) cPCA (d) PCPCA. 
    For (a), we fix a random seed. For (b), (c), and (d), we plot the projection with the best silhouette score over 100 hyperparameter values.
    }
    \label{fig:mice_data}
\end{figure}

\subsubsection{Corrupted MNIST data with discrete strength}
We superimpose hand-written digits 0 and 1 from MNIST \cite{deng2012mnist} onto grass and cloud images from \cite{deng2009imagenet}. The background dataset consists of 5000 cloud images and 5000 grass images. 
For the foreground dataset, we sample 8000 grass and 2000 cloud images to create different foreground-to-background cumulant ratios for $\lambda_i'/\lambda_i$ in equation \eqref{eqn:tensor_decomp}. 
Similar to the corrupted MNIST data with continuous strength, we expect a ratio of $0.4$, while for grass images, we expect a ratio of $1.6$.
Next, we sample 2500 digit 0, 2500 digit 1 images and form 2500 images consisting of both digit 0 and digit 1. 
We then superimpose 2500 digit 0, 2500 digit 1, and 2500 combined digit 0 and 1 images onto a randomly chosen subset of the background, as shown in the top row of Figure~\ref{fig: MNIST foreground}.
The inclusion of digits 0, 1, both, and none is to make the images of 0 and 1 independent patterns. 
Each image is of size $28\times 28$.

\begin{figure}[htbp]
    \centering
    \includegraphics[scale=0.2]{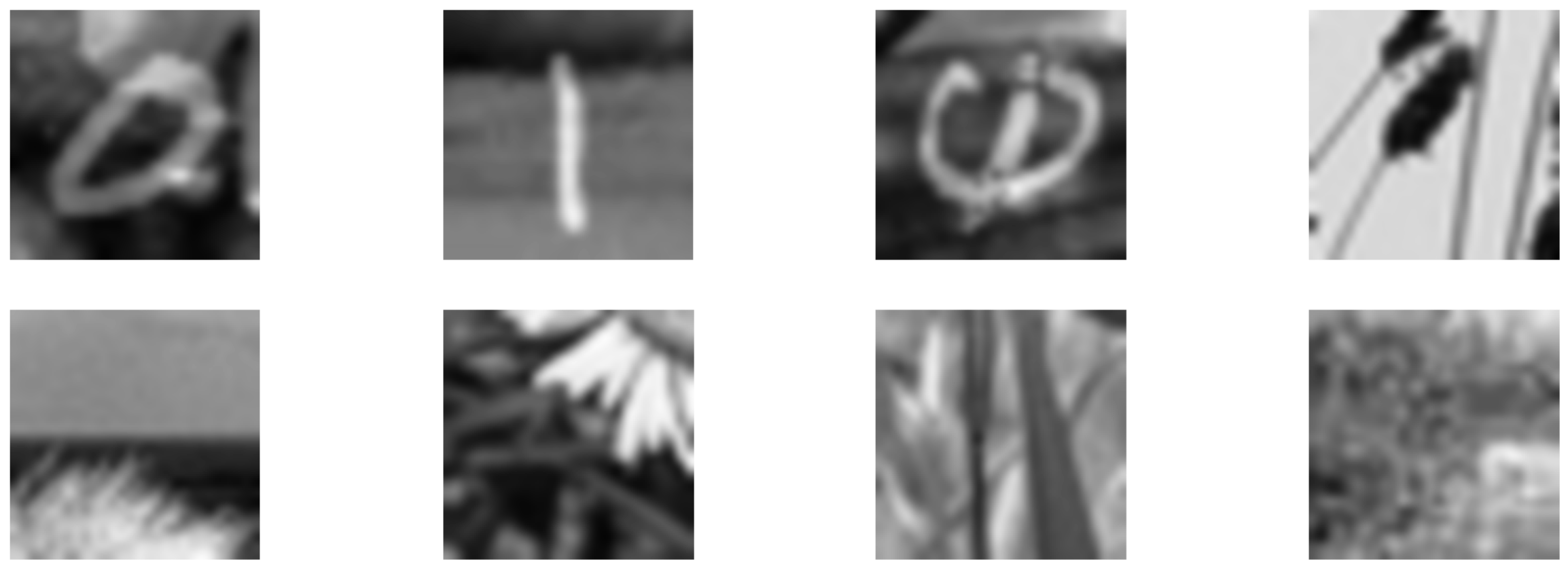}
    \caption{
    Foreground (top) and background images (bottom) for the mixed corrupted MNIST dataset
    } 
    \label{fig: MNIST foreground}
\end{figure}

We plot the 5000 images of digits 0 or 1 superimposed on grass or cloud images using their inner product with the patterns learned in cICA, ICA, cPCA, and PCPCA.
The plots are shown in Figure \ref{fig:mixed_corrupted_silhouette}. 
The algorithm cICA has the highest silhouette score (0.61), followed by cPCA (0.52), then PCPCA (0.44), then ICA (0.30). 
We also report the performance of each of the patterns for classifying the digits 0 or 1 from the corrupted images using the sign of their inner product with the pattern. 
The classification accuracies for cICA, cPCA, and PCPCA are in Table \ref{tab:accuracy each pattern}.
Both foreground cICA patterns can separate the digits 0 and 1 images with more than 0.9 accuracy, while cPCA and PCPCA only have one pattern that achieves this. See Section~\ref{app: mixed mnist} of the Appendix for details.

\begin{figure}[htbp]
    \centering
    \subfigure[]{\includegraphics[height = 3.15cm]{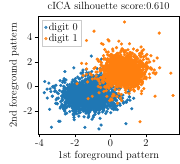}} 
    \subfigure[]{\includegraphics[height = 3.15cm]{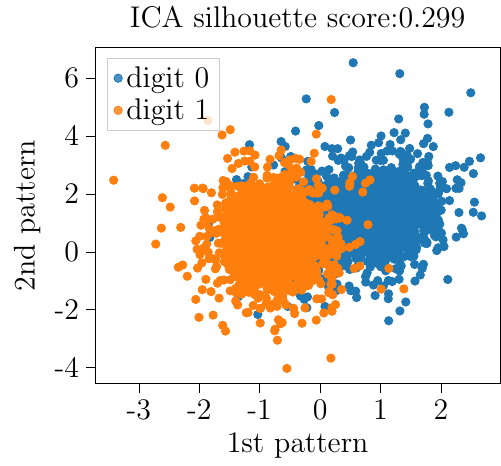}} 
    \subfigure[]{\includegraphics[height = 3.15cm]{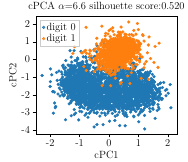}}
    \subfigure[]{\includegraphics[height = 3.15cm]{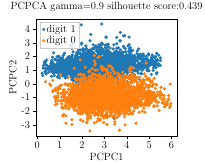}} 
    \caption{Dimensionality reduction plots of the mixed corrupted MNIST data via (a) cICA (b) ICA (c) cPCA (d) PCPCA.
    }
    \label{fig:mixed_corrupted_silhouette}
\end{figure}

\begin{table}[htbp]
    \centering
    \begin{tabular}{|c|c|c|}
    \hline
    method & first pattern (\%) &  second pattern (\%)\\
    \hline
    cICA & 94 & 93\\
    \hline
    cPCA & 71 & 94 \\
    \hline
    PCPCA & 50 & 94 \\
    \hline 
    \end{tabular}
    \caption{Classification accuracies for identifying digits $0$ or $1$ from corrupted images from each of the top two foreground patterns.}
    \label{tab:accuracy each pattern}
\end{table}








\section{Summary}

We have presented contrastive independent component analysis (cICA), a tool to explore patterns and visualize data in one setting relative to another. 
Unlike existing contrastive methods, cICA can model background patterns that each contribute to the foreground in different relative amounts $\lambda_i'/\lambda_i$.
We designed an algorithm for cICA based on
a new hierarchical tensor decomposition (HTD).
The algorithm uses linear algebra to decompose symmetric $p \times p \times p \times p$ tensors of rank at most $p^2$, encouraging orthogonality between rank-1 components.
We use cICA to find salient patterns that describe a foreground dataset relative to a background, testing the results on synthetic, semi-synthetic, and real-world datasets. 
We saw that it can extract foreground patterns of interest and is competitive with other methods. 

We investigated the identifiability of cICA, via the uniqueness of its associated coupled tensor decomposition, seeing improvements relative to cPCA and PCPCA.
This echoes the improved identifiability of ICA over PCA: a general linear mixing can be recovered uniquely via ICA, whereas PCA requires an orthogonal mixing. 

We conclude with two directions for further study.
This cICA model describes observations as a linear mixing of independent latent variables. Dropping the linearity assumption, we may seek patterns that have nonlinear signatures across the observed variables. This would combine the nonlinear contrastive methods of~\cite{abid2019contrastive,severson2019unsupervised,weinberger2022moment,lopez2024toward} with 
approaches to find interpretable patterns, generalizing the vectors $\fb_i$.
Finally, dropping the independence assumption on the latent variables would connect cICA to other latent variable models such as those arising in causal disentanglement~\cite{yang2021causalvae,squires2023linear}.




\subsection*{Acknowledgements} 
We thank Salil Bhate for helpful discussions.
AM and AS were partially supported by the NSF (DMS-2306672 and DMR-2011754).

\bibliographystyle{alpha}
\bibliography{references}

\newpage
\appendix

\section {Comparison of HTD with other tensor decompositions}\label{app:comparison}
\subsection{Comparison of HTD with other hierarchical tensor decompositions}

We compare HTD in Algorithm~\ref{alg:hierarchical} to other hierarchical tensor decompositions. 
The goal of hierarchical tensor decomposition~\cite[Chapter 11]{hackbusch2012tensor} is to efficiently represent a tensor that lives in a high-dimensional space. Given a tensor of order $d$, a hierarchical decomposition is based on a hierarchy of vector spaces given by a dimension partition tree on indices $\{ 1, \ldots, d\}$, such as those in Figure~\ref{tree2}. 

\begin{figure}[htbp] \centering 
\subfigure[]
{\begin{forest}
  [{$\{1,2,\ldots,d\}$}
  [{$\{1\}$}]
  [{$\{2,\ldots,d\}$}
  [{$\{2\}$}]
  [{$\vdots$}
  [{$\{d-1\}$}]
  [{$\{d\}$}]
  ]
  ]
  ]
\end{forest}}
\hspace{15mm}
\subfigure[]
{\begin{forest}
  [{$\{1,2,3,4\}$}
    [{$\{1,2\}$}
      [{$\{1\}$}]
      [{$\{2\}$}]
    ]
    [{$\{3,4\}$}
      [{$\{3\}$}]
      [{$\{4\}$}]
    ]
  ]
\end{forest}}
\caption{The dimension partition trees used in (a) the PARATREE algorithm of~\cite{salmi2009sequential} and (b) our HTD from Algorithm~\ref{alg:hierarchical}.}
\label{tree2}
\end{figure}

Hierarchical tensor representations in~\cite[Chapter 11]{hackbusch2012tensor} start at the leaves of the tree, which are labeled by single indices. One finds subspaces $U_i \subseteq \RR^{n_i}$ such that the tensor is well-approximated by a tensor in the lower-dimensional space $U_1 \otimes \cdots \otimes U_d \subset \RR^{n_1} \otimes \cdots \otimes \RR^{n_d}$. Proceeding from leaves to the root, when two indices $\{ i \}$ and $\{ j \}$ combine to form the subset $\{ i , j \}$, the representation finds a subspace $U_{ij} \subset U_i \otimes U_j$ that well-approximates the tensor. 
This repeats until we have a low-dimensional subspace $U_{1 \cdots d} \subseteq \RR^{n_1} \otimes \cdots \otimes \RR^{n_d}$ such that the tensor $T$ lies in this subspace to reasonable accuracy. Fixing ranks fixes the allowable dimension of the subspaces $U_I$ for the subsets $I \subseteq [d]$ in the tree. See~\cite[Figure 11.1]{hackbusch2012tensor}.

The PARATREE model starts at the root of the tree. For example, if the root is the splitting of $\{ 1, 2, 3 \}$ into $\{ 1 \} \cup \{ 2,3 \}$ (i.e. Figure~\ref{tree2} in the case $d=3$) then one computes a decomposition of the flattened tensor in $\RR^{n_1} \otimes \RR^{n_2 n_3}$ to give a sum $\sum_{i=1}^{r_1} \mathbf{u}_i \otimes \mathbf{x}_i$, with $\mathbf{u}_i \in \RR^{n_1}$ and $\mathbf{x}_i \in \RR^{n_2 n_3}$. The second step is the splitting of indices $\{ 2, 3\} = \{ 2 \} \cup \{ 3\}$. This decomposes each vector $\mathbf{x}_i = \sum_{j=1}^{r_2} \mathbf{v}_{i,j} \otimes \mathbf{w}_{i,j}$, 
where $\mathbf{x}_i \in \RR^{n_2 n_3}$ is viewed as a matrix of size $n_2 \times n_3$. This results in the decomposition 
\begin{equation}
    \label{eqn:paratree_3}
    T = \sum_{i=1}^{r_1} \mathbf{u_i} \otimes \left( \sum_{j=1}^{r_2} \mathbf{v}_{i,j} \otimes \mathbf{w}_{i,j} \right) . 
\end{equation} 
This pattern can be continued for larger $d$, see~\cite[Equation 9]{salmi2009sequential}. 

Our HTD takes a symmetric $p \times p \times p \times p$ tensor as input. We use the dimension partition tree in Figure \ref{tree2}(b). HTD can be viewed as a symmetric analog of the PARATREE model, but differs in that it uses a different dimension partition tree, and leverages the symmetry of the tensor and decomposition to produce a rank $r$ decomposition, rather than the rank $r_1 r_2$ (or, more generally, rank $r_1 \cdots r_{d-1}$) decomposition obtained from~\eqref{eqn:paratree_3}. Compared to the hierarchical tensor representations of~\cite[Chapter 11]{hackbusch2012tensor}, it differs in that the tensor is symmetric and it uses the dimension partition tree from root to leaves rather than leaves to root.

\subsection{Comparison of HTD with other linear algebra-based tensor decompositions}


Jennrich's Algorithm \cite{harshman1970foundations} decomposes an order 3 tensor $T = \sum_{i=1}^r \mathbf{u}_i\otimes \mathbf{v}_i \otimes \mathbf{w}_i$, requiring $\mathbf{u}_1,\ldots,\mathbf{u}_r$ to be linearly independent and $\mathbf{v}_1,\ldots,\mathbf{v}_r$ to be linearly independent. It computes two matrices $M_z = T(:,:,z), M_{z'} = T(:,:,z')$ for random unit norm vectors $z,z'$ and then computes eigendecompositions of $M_zM_{z'}^{+}$ and $M_{z'}M_z^+$. 
The decomposition of $T$ can then be recovered via pairing the eigenvalues of the two eigendecompositions. 
When applying Jennrich's algorithm to an order-4 symmetric tensor, we need to flatten the 3rd and 4th dimensions of the tensor to form an order-3 tensor first. 
It can decompose a symmetric $p \times p \times p \times p$ tensor of rank at most $p$ due to the linear independence requirement and it takes $O(p^4)$ operations, where the most costly step is forming the matrices $M_z$ and $M_{z'}$.

Orthogonal symmetric decomposition \cite{kolda2015symmetric} decomposes a symmetric tensor $T = \sum_{i=1}^r \mathbf{u}_i^{\otimes d}$ where $\mathbf{u}_1,\ldots,\mathbf{u}_r$ are orthogonal. It takes a random $S \in (\RR^{p})^{\otimes (d-2)}$ and computes the eigendecomposition of $T(S,:,:)$. The vectors $\mathbf{u}_1,\ldots,\mathbf{u}_r$ are eigenvectors of the matrix $T(S,:,:)$. As in Jennrich's algorithm, it can also decompose a symmetric tensor in $(\RR^p)^{\otimes 4}$ with rank at most $p$ due to the orthogonal requirement and it takes $O(p^4)$ operations where the most costly step is forming the matrice $T(S,:,:)$.

In comparison, HTD can decompose a symmetric $p \times p \times p \times p$ tensor of rank up to $p^2$. The algorithm has a computational complexity of $O(p^4r)$ for a rank $r$ tensor, due to the complexity of the eigendecomposition of the flattening. 
HTD recovers the orthogonal symmetric decomposition when the tensor is orthogonally decomposable.

\section{Detailed proof of Theorem \ref{thm:nearly_orthogonal}}
\label{app: thm properties of HTD}
\begin{theorem*}[2.4]
Fix vectors $\fb_1, \ldots, \fb_\ell \in \mathbb{R}^p$ with
$
|\langle \fb_i, \fb_j \rangle| \leq \epsilon$ for all $i \neq j$.
Let
$$
T = \sum_{i=1}^\ell \nu_i \fb_i^{\otimes 4},
$$
where $\nu_1 > \cdots > \nu_\ell$, $\ell \leq p$, and $\fb_1^{\otimes 2},\ldots, \fb_\ell^{\otimes 2}$ are linearly independent. 
Fix $\hat{T}$ with
$
\| \hat{T} - T \|_F \leq \delta$. 
Let $\fc_i$ be the output patterns of the HTD algorithm with input tensor $\hat{T}$ and $\mu_i$ the corresponding recovered scalars ordered so that $\mu_1 > \cdots > \mu_\ell$.
Then for any $i \in [\ell]$,
$$
|\nu_i - \mu_i| \leq (2|\nu_i|L + K)\epsilon^2 + \left( \frac{|\nu_i|}{\nu} 2^\frac{5}{2} + 1 \right) \delta + o(\epsilon^2) + o(\delta)
$$
$$
\min\left\{ \|\fb_i - \fc_i\|, \|\fb_i + \fc_i\| \right\} \leq 2^{3/2} L \epsilon^2 + \frac{8 \delta}{\nu} + o(\epsilon^2) + o(\delta).
$$
where
$$
K = \sqrt{8} \sum_{i=1}^\ell |\nu_i|(i-1), \quad L = 2^{3/2} \frac{K}{\nu} + 2\ell-2, \quad \nu = \min_{i \neq j} \{ |\nu_i - \nu_j|, |\nu_i| \}.
$$
\end{theorem*}

We prove Theorem~\ref{thm:nearly_orthogonal}  via the following lemma.

\begin{lemma}\label{lem:close eigendecomp}
Fix $\fb_1, \dots, \fb_\ell \in \mathbb{R}^p$ such that $|\langle \fb_i, \fb_j \rangle| \leq \epsilon$ for all $i \neq j$.
Let $\mathbf{B}_i$ be the vectorization of $\fb_i^{\otimes 2}$.
Define
$
M = \sum_{i=1}^\ell \nu_i \mathbf{B}_i^{\otimes 2}
$.
Then there exists a matrix $M'$ with eigendecomposition
$
M' = \sum_{i=1}^\ell \nu_i  \mathbf{B}_i'^{\otimes 2}
$
such that for all $i \in [\ell]$,
$$
\|\mathbf{B}_i - \mathbf{B}_i'\| \leq 2(\ell-1)\epsilon^2 + O(\epsilon^4) \quad \text{and} \quad 
\|M - M'\|_F \leq \sqrt{8} \sum_{i=1}^\ell |\nu_i|(i-1) \epsilon^2 + O(\epsilon^4).
$$
\end{lemma}

\begin{proof} 

We generate orthogonal vectors via Gram-Schmidt:
$$
\mathbf{B}_j'' = \mathbf{B}_j - \sum_{i=1}^{j-1} \langle \mathbf{B}_i', \mathbf{B}_j \rangle \mathbf{B}_i', \quad \mathbf{B}_j' = \frac{\mathbf{B}_j''}{\|\mathbf{B}_j''\|}.
$$
The vectors $\mathbf{B}_i$ satisfy $\|\mathbf{B}_i\| = 1$ for all $i$ and $\langle \mathbf{B}_i, \mathbf{B}_j \rangle \leq \epsilon^2$ for $i \neq j$.
We will prove by induction on $j$ that
$$
|\langle \mathbf{B}_j', \mathbf{B}_k \rangle| \leq \epsilon^2 + O(\epsilon^4) \quad \text{for all } k > j.
$$
When $j=1$, $\mathbf{B}_1' = \mathbf{B}_1$, so the result follows immediately.
Assume the result is true for $j-1$. Then,
\begin{align*}
|\langle \mathbf{B}_j'', \mathbf{B}_k \rangle| &= | \langle \mathbf{B}_j, \mathbf{B}_k \rangle - \sum_{i=1}^{j-1} \langle \mathbf{B}_i', \mathbf{B}_j \rangle \langle \mathbf{B}_i', \mathbf{B}_k \rangle | \\
&\leq |\langle \mathbf{B}_j, \mathbf{B}_k \rangle| + \sum_{i=1}^{j-1} |\langle \mathbf{B}_i', \mathbf{B}_j \rangle| |\langle \mathbf{B}_i', \mathbf{B}_k \rangle| \\
&\leq \epsilon^2 + (j-1)(\epsilon^2 + O(\epsilon^4))^2 \\
&= \epsilon^2 + O(\epsilon^4).
\end{align*}
The inner product with $\mathbf{B}_j'$ is obtained from that with $\mathbf{B}_j''$ via 
$$
|\langle \mathbf{B}_j', \mathbf{B}_k \rangle| = \frac{|\langle \mathbf{B}_j'', \mathbf{B}_k \rangle|}{\|\mathbf{B}_j''\|},
$$
so we obtain
$$
|\langle \mathbf{B}_j', \mathbf{B}_k \rangle| \leq \frac{\epsilon^2 + O(\epsilon^4)} {\|\mathbf{B}_j\|-\|\mathbf{B}_j-\mathbf{B}_j''\|}\leq \frac{\epsilon^2 + O(\epsilon^4)}{1 - (j-1)\epsilon^2 + O(\epsilon^4)} = \epsilon^2 + O(\epsilon^4),
$$
which proves the inductive step.
By Gram-Schmidt and the triangle inequality
$$
\|\mathbf{B}_j''- \mathbf{B}_j\| = \|\sum_{i=1}^{j-1} \langle \mathbf{B}_i', \mathbf{B}_j \rangle \mathbf{B}_i'\|\leq \sum_{i=1}^{j-1}|\langle \mathbf{B}_i', \mathbf{B}_j \rangle| \leq (j-1)\epsilon^2 + O(\epsilon^4) \leq (\ell-1)\epsilon^2+  O(\epsilon^4).
$$
Thus, we bound the distance between $\mathbf{B}_j'$ and $ \mathbf{B}_j$ via the triangle inequality and $\mathbf{B}_j' = \frac{\mathbf{B}_j''}{\|\mathbf{B}_j''\|}$ by \begin{align*}
\|\mathbf{B}_j' - \mathbf{B}_j\| & \leq \|\mathbf{B}_j'-\mathbf{B}_j'' \|+ \| \mathbf{B}_j''-\mathbf{B}_j\|      \\
& = |\frac{1-\|\mathbf{B}_j''\|}{\|\mathbf{B}_j''\|}| + \| \mathbf{B}_j''-\mathbf{B}_j\|\\
& \leq \frac{\|\mathbf{B}_j-\mathbf{B}_j''\|}{1- \|\mathbf{B}_j-\mathbf{B}_j''\|} + \|\mathbf{B}_j-\mathbf{B}_j''\| \\
& \leq 2(j-1)\epsilon^2 + O(\epsilon^4).
\end{align*}
Finally, we bound the Frobenius norm of the difference between $M$ and $M'$ by \begin{align*}
\|M - M'\|_F &= \| \sum_{i=1}^\ell \nu_i \mathbf{B}_i^{\otimes 2} - \sum_{i=1}^\ell \nu_i \mathbf{B}_i'^{\otimes 2}\|_F \\
&\leq \sqrt{2} \sum_{i=1}^\ell |\nu_i| \|\mathbf{B}_i-\mathbf{B}_i'\| \\
&\leq \sqrt{8} \sum_{i=1}^\ell |\nu_i|(i-1)\epsilon^2 + O(\epsilon^4). \qedhere 
\end{align*}
\end{proof}

\begin{proof}[Proof of Theorem~\ref{thm:nearly_orthogonal}]
Fix
$
M = \sum_{i=1}^r \nu_i \mathbf{B}_i^{\otimes 2}
$
and 
$M' = \sum_{i=1}^r \nu_i \mathbf{B}_i'^{\otimes 2}$
as in Lemma \ref{lem:close eigendecomp}.
Fix $\hat{M} = \Mat(\hat{T})$ and let
$$ \hat{M}= \sum_{i=1}^r \hat{\nu_i} \hat{\mathbf{B}_i}^{\otimes 2}
$$
be its eigendecomposition.
By the triangle inequality and Lemma \ref{lem:close eigendecomp}, we have
$$
\|\hat{M} - M'\|_F \leq \|\hat{M} - M\|_F + \|M - M'\|_F = 
\delta + \|M - M'\|_F \leq \delta + K\epsilon^2 + O(\epsilon^4),
$$
where $K = \sqrt{8} \sum_{i=1}^\ell |\nu_i|(i-1)$.
By Weyl's theorem,
$$
|\nu_i - \hat{\nu_i}| \leq \|\hat{M} - M'\|_{\text{op}} \leq \|\hat{M} - M'\|_F.
$$
By the variant of the Davis-Kahan theorem in \cite{yu2015useful},
$$
\|\hat{\mathbf{B}}_i - \mathbf{B}_i'\| \leq \frac{2^\frac{3}{2} }{\nu}\|\hat{M} - M'\|_F
\quad \text{where} \quad \nu = \min_{j \neq i} \{|v_i|, |v_i - v_j|\} .
$$
Thus, we bound the distance between $\mathbf{B}_i$ and $\hat{\mathbf{B}}_i$, using the triangle inequality, by
\begin{align*}
  \|\mathbf{B}_i - \hat{\mathbf{B}}_i\| & \leq \|\mathbf{B}_i - \mathbf{B}_i'\| + \|\mathbf{B}_i' - \hat{\mathbf{B}}_i\|  \\
  & \leq 2(\ell-1)\epsilon^2 + \frac{2^\frac{3}{2}}{\nu} \delta + \frac{2^\frac{3}{2} K}{\nu} \epsilon^2 + O(\epsilon^4)\\
&= L\epsilon^2 + 2^\frac{3}{2} \frac{\delta}{\nu} + O(\epsilon^4),
\end{align*}
where $L = 2^{3/2} \frac{K}{\nu} + 2\ell-2.$
The top eigenvector of $\Mat(\hat{\mathbf{B}}_i)$ is $\fc_i$. Suppose its eigenvalue is $\alpha$.
The top eigenpair of $\Mat(\mathbf{B}_i)$ is $(\fb_i, 1)$.
Therefore, again by the Davis-Kahan theorem, we have
$$
\min\left\{ \|\fb_i - \fc_i\|, \|\fb_i + \fc_i\| \right\} \leq 2^\frac{3}{2} \|\mathbf{B}_i - \hat{\mathbf{B}}_i\|\leq 2^\frac{3}{2}L\epsilon^2 + 8\frac\delta\nu + O(\epsilon^2).
$$
By Weyl's theorem,
$$
|\alpha - 1| \leq \|\mathbf{B}_i - \hat{\mathbf{B}}_i\|_{op}\leq \|\mathbf{B}_i - \hat{\mathbf{B}}_i\|_F \leq  L\epsilon^2 + 2^\frac{3}{2} \frac{\delta}{\nu} + O(\epsilon^4).
$$
The algorithm of HTD implies
$$
\mu_i = \hat \nu_i\alpha^2.
$$
Hence, we obtain, by the triangle inequality,
\begin{align*}
|\mu_i - \nu_i| &\leq |\mu_i - \hat{\nu}_i| + |\hat{\nu}_i - \nu_i| \\
& \leq |\hat{\nu}_i||1-\alpha^2| + |\hat{\nu}_i - \nu_i|\\
& \leq (|\hat{\nu}_i-\nu_i|+ |\nu_i|) |1-\alpha|(2+|1-\alpha|) + |\hat{\nu}_i - \nu_i|\\
& \leq 2|1-\alpha||\nu_i| + |\hat{\nu}_i - \nu_i| +  o(\epsilon^2) + o(\delta)\\
& \leq 2|\nu_i|L\epsilon^2 + 2^\frac{5}{2} |\nu_i| \frac{\delta}{\nu} + \delta + K\epsilon^2+ o(\epsilon^2) + o(\delta). \qedhere 
\end{align*}
\end{proof}

\section{Detailed proof of Theorem \ref{thm: error for T} and \ref{thm: end to end error}}

\subsection{Proof of Theorem \ref{thm: error for T}}
Suppose we are in the setting of cICA, where the foreground and background datasets are described by ICA models
$$
\by = A \mathbf{z}, \qquad \bx = A \mathbf{z}' + B \mathbf{s}
$$
and the population cumulant tensors are
$$
\kappa_4(\by) = \sum_{i=1}^r \lambda_i \fa_i^{\otimes 4}, \quad \kappa_4(\bx) = \sum_{i=1}^r \lambda_i' \fa_i'^{\otimes 4} + \sum_{i=1}^\ell \nu_i \fb_i^{\otimes 4}.
$$
Let $\hat{\kappa}_4(\by), \hat{\kappa}_4(\bx)$ be the sample cumulant tensors for the two datasets.

\begin{theorem*}
Let
$T = \sum_{i=1}^\ell \nu_i \fb_i^{\otimes 4}$
and let $\hat{T}$ be the tensor obtained after Steps 1 and 2 
of Algorithm \ref{alg:generic a b} with input sample cumulant tensors $\hat{\kappa}_4(\bx), \hat{\kappa}_4(\by)$.
Let
$\rho~= \max_{i\neq j} |\langle \fa_i, \fa_j \rangle|, M_y = \Mat(\kappa_4(\by))
$
and 
$
\Delta_M = \| M_y - \Mat(\hat{\kappa_4}(\by)) \|_2. 
$
Let $\sigma_r(M_y)$ denote the $r$-th largest singular value of $M_y$.
Define
$$
\Delta_A = \frac{\Delta_M}{\sigma_r(M_y) - \Delta_M}, \quad \lambda = \min_i |\lambda_i|,\quad \lambda' = \lambda(1-(r-1)\rho).
$$
Under the assumptions that $(r-1)\rho= o(1)$,
that $\Delta_M < \frac{\lambda}{45} + O(\rho)$,
and moreover that $\max_i |\lambda_i'| \frac{2\sqrt{\Delta_A} + 3 \Delta_A}{\lambda'} = o(1)$,
we have
$$
\|\hat{T} - T\|_F \leq \|\hat{\kappa}_4(\bx) - \kappa_4(\bx)\|_F + \beta\sqrt{\Delta_M} + O(\Delta_M),
$$
where $\beta= (\sum_{i=1}^r|\lambda_i'|\sqrt{\frac 2 {\lambda'}} +  |\lambda_i'^2| 2 \lambda'^{-\frac 3 2}).$
\end{theorem*}

\begin{proof}
Let $\fa_i'$ be the estimate of $\fa_i$ obtained via Step 1 of Algorithm \ref{alg:generic a b}, and $\mu_i$ be the estimate of $\lambda_i'$ via Step 2 of Algorithm \ref{alg:generic a b}.
We can bound the difference between the true tensor $T$ and the recovered tensor $\hat{T}$ as
\begin{align*}
 &\|\hat{T} - T\|_F\\
 =& \|\hat{\kappa}_4(\bx) - \sum_{i=1}^r \mu_i \fa_i'^{\otimes 4} - \kappa_4(\bx) + \sum_{i=1}^r \lambda_i' \fa_i^{\otimes 4}\|_F \\
\leq& \|\hat{\kappa}_4(\bx) - \kappa_4(\bx)\|_F + \|\sum_{i=1}^r \mu_i (\fa_i^{\otimes 4} - \fa_i'^{\otimes 4}) \|_F + \|\sum_{i=1}^r (\lambda_i' - \mu_i) \fa_i^{\otimes 4}\|_F \\
\leq& \|\hat{\kappa}_4(\bx) - \kappa_4(\bx)\|_F + \sum_{i=1}^r |\mu_i| \|\fa_i^{\otimes 4}  - \fa_i'^{\otimes 4} \| + \sum_{i=1}^r |\lambda_i' - \mu_i|  \\
\leq& \|\hat{\kappa}_4(\bx) - \kappa_4(\bx)\|_F +\sum_{i=1}^r 2|\mu_i| \|\fa_i  - \fa_i' \| + \sum_{i=1}^r |\lambda_i' - \mu_i|,
\end{align*}
where the first two inequalities follow from the triangle inequality and the last inequality follows from
\begin{align*}
&\|\fa_i^{\otimes 4} - \fa_i'^{\otimes 4}\|^2 = 2 - 2 \langle \fa_i, \fa_i' \rangle^4\\
= &2 - 2\left(1 - \frac{1}{2} \|\fa_i - \fa_i'\|^2\right)^4 \\
\leq &2 - 2 + 4 \|\fa_i - \fa_i'\|^2 \quad \text{(using } (1-x)^4 \geq 1-4x \text{ for small } x\text{)} \\
=& 4\|\fa_i - \fa_i'\|^2.
\end{align*}

By \cite[Lemma S.32]{kileel2021landscape}, we have
$\sigma_r(M_y) \geq \lambda \sigma_r(G_2)$, where $G_2\in \RR^{r\times r}$ is the matrix with $(i,j)$ entry $\langle \fa_i,\fa_j\rangle^2.$
By the proof of \cite[Lemma 6]{kileel2021landscape}, we have 
$\sigma_r(G_2)\geq 1-\rho_2$ where $\rho_s = \sup_{\|x\|=1} \sum_{i=1}^r |\langle x, \fa_i\rangle |^s-1$ for $s>0$
and $\rho_s\leq (r-1) \rho^{\lfloor s/2 \rfloor}$.
Thus, we can lower bound $\sigma_r(M_y)$ by 
$$
\sigma_r(M_y) \geq \lambda - \lambda(r-1)\rho =\lambda' = \lambda + O(\rho) .
$$
Let $\tau = \frac{1}{6} - 4 \rho_2- 6\rho_4 = \frac{1}{6}+ O(\rho)$. 
By \cite[Theorem 7]{kileel2021landscape}, if $\Delta_A < \frac{2\tau}{2+4\tau +12}$, 
we can bound the distance between the true component $\fa_i$ and learned component $\fa_i'$ by $$\|\fa_i-\fa_i'\|\leq \sqrt{\frac{\Delta_A}{2}}.$$
The condition is satisfied when $\frac{\Delta_M}{\lambda-\Delta_M+ O(\rho)}\leq \frac{1}{44}+ O(\rho)$. This explains our second assumption $\Delta_M \leq \frac{\lambda}{45}+O(\rho).$

By \cite[Lemma S.31]{kileel2021landscape}, the distance between the numbers $\frac 1 {\lambda_i'}$ and $ \frac 1 {\mu_i}$ is bounded from above by 
\begin{align*}
|\frac 1 {\lambda_i'} - \frac 1 {\mu_i}| &
\leq \frac{\sqrt{8}}{\sigma_r(M_y)}\|\fa_i-\fa_i'\| + \Delta_A (\frac{2}{\sigma_r(M_y)}+ \frac{1}{\sigma_r(M_y)-\Delta_M})\\
&\leq \frac{1}{\sigma_r(M_y)} (2\sqrt{\Delta_A} + 3\Delta_A)\\
&\leq \frac{1}{\lambda'} (2\sqrt{\Delta_A} + 3\Delta_A).
\end{align*}
This implies that 
\begin{align*}
|\lambda_i'-\mu_i| &\leq |\lambda_i' \mu_i|\frac{1}{\lambda'} (2\sqrt{\Delta_A} + 3\Delta_A)\\
& \leq (|\lambda_i'^2| + |\lambda_i'||\lambda_i'-\mu_i|)\frac{1}{\lambda'} (2\sqrt{\Delta_A} + 3\Delta_A).
\end{align*}
Rearranging, we obtain 
$$|\lambda_i'-\mu_i| (1-|\lambda_i'|\frac{1}{\lambda'} (2\sqrt{\Delta_A} + 3\Delta_A)) 
\leq |\lambda_i'^2| \frac{1}{\lambda'} (2\sqrt{\Delta_A} + 3\Delta_A).$$
To obtain an upper bound on $|\lambda_i'-\mu_i|$
from the above inequality, we need 
$|\lambda_i'|\frac{1}{\lambda'} (2\sqrt{\Delta_A} + 3\Delta_A)<1,$  which is our third assumption in the  statement.
Thus, the distance between the true coefficient $\lambda_i'$ of the rank one component $\fa_i^{\otimes 2}$ and the learned coefficient $\mu_i$ is bounded by 
\begin{align*}
|\lambda_i'-\mu_i| &\leq |\lambda_i'^2| \frac{1}{\lambda'} (2\sqrt{\Delta_A} + 3\Delta_A) (1+ |\lambda_i'|\frac{1}{\lambda'} (2\sqrt{\Delta_A} + 3\Delta_A) + O(\Delta_A)) \\
& = |\lambda_i'^2| \frac{2}{\lambda'}\sqrt{\Delta_A} + O(\Delta_A).
\end{align*}
Plugging the bounds on $\|\fa_i'-\fa_i\|$ and $|\lambda_i'-\mu_i|$ into the bound on $\|\hat{T}-T\|_F$, we obtain
\begin{align*}
\|\hat{T}-T\|_F & \leq \|\hat{\kappa}_4(\bx) - \kappa_4(\bx)\|_F +\sum_{i=1}^r 2|\mu_i| \|\fa_i  - \fa_i' \| + \sum_{i=1}^r |\lambda_i' - \mu_i|\\
&\leq \|\hat{\kappa}_4(\bx) - \kappa_4(\bx)\|_F + \sum_{i=1}^r 2 (|\lambda_i'-\mu_i|+ |\lambda_i'|) \sqrt{\frac{\Delta_A}{2}}+ \sum_{i=1}^r |\lambda_i'^2| \frac{2}{\lambda'}\sqrt{\Delta_A} + O(\Delta_A) \\
& \leq \|\hat{\kappa}_4(\bx) - \kappa_4(\bx)\|_F + \sum_{i=1}^r|\lambda_i'|\sqrt{2\Delta_A} + \sum_{i=1}^r |\lambda_i'^2| \frac{2}{\lambda'}\sqrt{\Delta_A} + O(\Delta_A).
\end{align*}
Note that 
$\sigma_r(M_y)\leq \lambda'$ so $\Delta_A = \frac{\Delta_M}{\sigma_r(M_y) - \Delta_M} \leq \frac{\Delta_M}{\lambda'}+ O(\Delta_M^2)$. 
Hence, replacing $\Delta_A$ by $
\Delta_M$, we obtain
\[
\|\hat{T}-T\|_F \leq \|\hat{\kappa}_4(\bx) - \kappa_4(\bx)\|_F + (\sum_{i=1}^r|\lambda_i'|\sqrt{\frac 2 {\lambda'}} +  |\lambda_i'^2| \frac{2}{\lambda'^{\frac 3 2}})\sqrt{\Delta_M} + O(\Delta_M). 
 \qedhere
 \]
\end{proof}

\subsection{Detailed proof of Theorem \ref{thm: end to end error}}

We restate the theorem for convenience.

\begin{theorem*}
Suppose we have $N_1$ samples for the background dataset and $N_2$ samples for the foreground dataset.
We can shift and scale our latent variables $z_i,z_i',s_j$ for $i, i' \in [r], j \in [\ell]$, so we assume without loss of generality that 
\begin{itemize}
\item $\mathbb{E}[z_i] = \mathbb{E}[z_i'] = \mathbb{E}[s_j] = 0$,
\item $\mathbb{E}[z_i^2] = \mathbb{E}[z_i'^2] = \mathbb{E}[s_j^2] = 1$.
\end{itemize}
Assume moreover that
the fourth cumulants of $z_i, z_i', s_j$ are nonzero, and
that the variables $z_i, z_i', s_j$ are sub-Gaussian.
Suppose $\fc_i$ 
are the output patterns of the cICA algorithm, with corresponding recovered scalars $\mu_i$, obtained from the tensor of foreground patterns~$T = \sum_{i=1}^\ell\nu_i \fb_i^{\otimes 4}$.
Under the assumptions of Theorem \ref{thm:nearly_orthogonal} and Theorem \ref{thm: error for T}, 
we have
$$
|\nu_i - \mu_i| \leq O(\epsilon^2) + \widetilde{O}(\delta),
$$
$$
\min\{\|\fb_i - \fc_i\|, \|\fb_i + \fc_i\|\} \leq O( \epsilon^2) + \widetilde{O}(\delta)
$$
where
$$
\delta =   \frac{p^2 \ell'^2}{N_2} + \sqrt{ \frac{p \ell'^4}{N_2} } + \sqrt{\frac{pr'^2}{N_1} + \sqrt{ \frac{r'^4}{p N_2} }},
$$
and
$\widetilde{O}$ absorbs polylog terms.
\end{theorem*}

We prove the theorem via the following lemmas.

\begin{lemma}\label{lem:norm}
Let $A \in \RR^{p\times r}$ be a matrix with columns $\fa_1, \ldots,\fa_r$, where $\|\fa_i\| = 1$ for all $i$, and
$
\max_{i \neq j} |\langle \fa_i, \fa_j \rangle| \leq \rho.
$
Then
$$
\|A\|_2 = 1 + O(\rho).
$$
\end{lemma}
\begin{proof}
Let $C = A\T A$.
For any $v \in \mathbb{R}^r$, we have
$$
\|(C - I_r) v\| \leq \rho \|v\|_1 \leq \sqrt{r} \rho \|v\|,
$$
thus
$$
\|C - I_r\|_2 \leq \sqrt{r} \rho.
$$
Let $\sigma$ be the top eigenvalue of $C$. Then
$
\sigma = \|A\|_2^2.
$
By Weyl's theorem, we have
$$
|\sigma - 1| \leq \|C - I_r\|_2 \leq \sqrt{r} \rho,
$$
so
$
\sigma = 1 + O(\rho),
$
and hence
\[
\|A\|_2 = \sqrt{1 + O(\rho)} = 1 + O(\rho).
\qedhere
\]
\end{proof}

Suppose $T$ is a symmetric tensor in $(\mathbb{R}^p)^{\otimes 4}$.
Its operator norm is
$$
\|T\| = \sup_{\|v_1\| = \|v_2\| = \|v_3\| = \|v_4\| = 1} |T(v_1,v_2,v_3,v_4)|
$$
where
$$
T(v_1, v_2, v_3, v_4) = \sum_{i=1}^p \sum_{j=1}^p \sum_{k=1}^p \sum_{\ell=1}^p T_{ijkl} (v_1)_i (v_2)_j (v_3)_k (v_4)_\ell.
$$

\begin{lemma}\label{lem:norms inequalities}
Suppose $T$ is a symmetric tensor in $(\mathbb{R}^p)^{\otimes 4}$.
Then, we have
$$
\|\Mat(T)\|_2 \leq p \|T\| \quad \text{and} \quad \|T\|_F \leq p^{\frac{3}{2}}\|T\|.
$$
\end{lemma}

\begin{proof}
Let $B\in \mathbb{R}^{p^2}$ such that $\|B\| = 1$ and
$$
B\T \Mat(T) B = \|\Mat(T)\|_2.
$$
The matrix $\Mat(B)$ is symmetric since it lies in the column span of $\Mat(T)$. 
Let
$\Mat(B) = \sum_{i=1}^p \lambda_i \fb_i^{\otimes 2}
$
be its eigendecomposition.
Then, we have
\begin{align*}
B\T \Mat(T) B &= \sum_{i=1}^p \sum_{j=1}^p \lambda_i \lambda_j T(\fb_i, \fb_i, \fb_j, \fb_j)\leq \left( \sum_{i=1}^p \lambda_i \right)^2 \|T\|.
\end{align*}
Note that $\|B\| = 1$, so
$
\sum_{i=1}^p \lambda_i^2 = 1$.
By the AM–GM inequality,
$
|\sum_{i=1}^p \lambda_i| \leq \sqrt{p}$.
Thus 
$$
\|\Mat(T)\|_2 = B\T \Mat(T) B \leq p \|T\|.
$$
The quantity \(\min_{T\neq 0}\frac{\|T\|}{\|T\|_F}\) is the best rank-one approximation ratio, see \cite{li2018orthogonal,kozhasov2024probabilistic}.
For fourth-order tensors of size $p$, we have \(\|T\|_F \le p^{3/2}\|T\|\) since $T$ can be written as a sum of at most $p^3$ tensors whose vectorizations are orthogonal, see \cite[Theorem 3.5]{li2018orthogonal} or \cite[Theorem 1.1]{kozhasov2024probabilistic}.
\end{proof}

\color{black}

We will use the following sample complexity result of ICA from \cite[Theorem 2]{anandkumar2014sample}.

\begin{theorem}\label{thm: sample complexity of ICA}
Consider $N$ samples $x^i = A h^i$, $i \in [N]$, from the ICA model with mixing matrix $A \in \mathbb{R}^{d \times k}$.
Suppose $\|A\| \leq O(1 + \sqrt{k/d})$ and the entries of $h \in \mathbb{R}^k$ are independent subgaussian variables with $\mathbb{E}[h_j^2] = 1$ and constant nonzero 4th order cumulant. Define $m = \max(d,k)$.
For the 4th order cumulant $\kappa_4$ in (8) and its empirical estimate $\hat{\kappa}_4$, if $n \geq d$, we have with high probability
$$
\|\hat{\kappa}_4 - \kappa_4\| \leq \widetilde{O}\left( \frac{m^2}{N} + \sqrt{\frac{m^4}{d^3 N}} \right).
$$
\end{theorem}

\begin{proof}[Proof of Theorem \ref{thm: end to end error}]

We have $\|A\| = 1 + O(\rho)$ and $\|B\| = 1+ O(\epsilon^2)$ by Lemma \ref{lem:norm}. Using the triangle inequality, we obtain
$$
\| (A,B) \| \leq \|A\| + \|B\| \leq 2 + O(\epsilon^2) + O(\rho).
$$
Thus, we have $\|A\| = O(1)$ and $\|(A,B)\| = O(1)$.

We obtain that 
the following bounds on the operator norm of the difference between the sample cumulants and true cumulants hold with high probability:
$$
\|\kappa_4(\by) - \hat{\kappa}_4(\by)\| = \widetilde{O}\left( \frac{r'^2}{N_1} + \sqrt{ \frac{r'^4}{p^3 N_1} } \right),
$$
$$
\|\kappa_4(\fx) - \hat{\kappa}_4(\fx)\| = \widetilde{O}\left( \frac{\ell'^2}{N_2} + \sqrt{ \frac{\ell'^4}{p^3 N_2} } \right),
$$
by Theorem \ref{thm: sample complexity of ICA}, under the assumptions on $z_i, z_i', s_j$ in the statement, and using $
\|A\| = O(1)$ and $\|(A,B)\| = O(1)$.
Let
$
T = \sum_{i=1}^\ell \nu_i \fb_i^{\otimes 4},
$
and let $\hat{T}$ be the tensor obtained after Steps 1 and 2 of Algorithm \ref{alg:generic a b}.
We can bound the distance between the true $T$ and the recovered $\hat{T}$ by 
\begin{align*}
\|\hat{T} - T\|_F &\leq \|\hat{\kappa}_4(\fx) - \kappa_4(\fx)\|_F + \beta \sqrt{\Delta_M} + O(\Delta_M) \\
&\leq p^{\frac{3}{2}}\|\hat{\kappa}_4(\fx) - \kappa_4(\fx)\|+
\beta \sqrt{p \|\hat{\kappa}_4(\by) - \kappa_4(\by)\|} + O(\Delta_M) \\
&= \widetilde{O}\left( \frac{p^{\frac{3}{2}} \ell'^2}{N_2} + \sqrt{ \frac{ \ell'^4}{N_2} } + \sqrt{p(\frac{r'^2}{N_1} + \sqrt{ \frac{r'^4}{p^3 N_1} })} \right)\\
& = \widetilde{O}\left( \frac{p^\frac{3}{2} \ell'^2}{N_2} + \sqrt{ \frac{\ell'^4}{N_2} } + \sqrt{\frac{pr'^2}{N_1} + \sqrt{ \frac{r'^4}{p N_1} }}  \right),
\end{align*}
using Theorem \ref{thm: error for T}.
Hence, we obtain the final bounds via Theorem \ref{thm:nearly_orthogonal} that
$$
|\nu_i - \mu_i| \leq (2|\nu_i|L + K) \epsilon^2 + \widetilde{O}(\delta) = O(\epsilon^2) + \widetilde{O}(\delta),
$$
and
$$
\min\{ \|\fb_i - \fc_i\|, \|\fb_i + \fc_i\| \} \leq {2^{3/2}}L \epsilon^2 + \widetilde{O}(\delta) = O(\epsilon^2) + \widetilde{O}(\delta),
$$
where
\[
\delta = \widetilde{O}\left( \frac{p^\frac{3}{2} \ell'^2}{N_2} + \sqrt{ \frac{\ell'^4}{N_2} } + \sqrt{\frac{pr'^2}{N_1} + \sqrt{ \frac{r'^4}{p N_1} }}  \right).\qedhere
\]
\end{proof}

\color{black}
\section{Proportional cICA}
\label{app: proportional cICA}
In this section, we present a variant of cICA called proportional cICA.
Recall that the cICA model expresses the background $\by$ and foreground $\bx$ as 
\begin{equation}
\label{eqn:cica1}
    \by = A \mathbf{z}\qquad \text{and} \qquad \bx = A \mathbf{z}' + B \mathbf{s}. 
\end{equation}
Proportional cICA assumes assumes $\bz' = \gamma \bz$ for some scalar $\gamma > 0$. This assumption also appears in cPCA \cite{abid2017contrastive}. There, the choice of the hyperparameter $\gamma$ is not unique. However, in our setting—which involves the fourth-order cumulants $\kappa_4(\by)$ and $\kappa_4(\bx)$, under the assumption that $r + \ell \leq {p+1 \choose 2}$—the value of $\gamma$ is uniquely determined, with a closed-form expression, see Theorem~\ref{thm: unique gamma}. The details of the ensuing algorithm for computing matrix $B$ are as follows.

\begin{algorithm}[htb]
\caption{Recover $B$ from the background and foreground cumulants when $\mathbf{z'} = \gamma \mathbf{z}$}
\label{alg:z z' prop}
\begin{algorithmic}[1]
\renewcommand{\algorithmicrequire}{\textbf{Input:}}
\Require $\kappa_{4}(\bx),\kappa_{4}(\by)$ and $\ell$ as in~\eqref{eqn:tensor_decomp}.
\State \textbf{Compute $\gamma$} using the following theorem.
\State \textbf{Recover $B$:} Compute rank $\ell$ symmetric decomposition of $\kappa_{4}(\bx)-\gamma^4 \kappa_4(\by)$, using Algorithm~\ref{alg:hierarchical}.
\renewcommand{\algorithmicrequire}{\textbf{Output:}}
\Require Mixing matrix $B$.
\end{algorithmic}
\end{algorithm}

\begin{theorem}
\label{thm: unique gamma}
Consider proportional cICA with $\bz' = \gamma \bz$, for $\gamma > 0$. 
For generic $\fa_1,\ldots,\fa_r$ and $\fb_1,\ldots,\fb_\ell$ with $r+\ell \leq {p+1\choose 2}$ and $r\neq 8$,
the hyperparameter $\gamma$ is the unique value  $(\frac{1}{\lambda_i} (\fa_i\T VD^{-1} V\T \fa_i)^{-1})^{\frac{1}{4}}$, where $i$ is any index between $1$ and $r$,
 $\lambda_i$ is the coefficient of $\fa_i^{\otimes 4}$ in $\kappa_4(\bx)$ and $VD V\T$ is the thin eigendecomposition of~$\Mat(\kappa_4(\bx))$. 
\end{theorem}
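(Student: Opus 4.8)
The plan is to derive the stated closed form by combining two ingredients: the homogeneity of cumulants, which links the foreground and background coefficients of $\fa_i^{\otimes 4}$ through $\gamma$, and the rank-drop extraction of a single coefficient from a flattened cumulant, which is the mechanism already exploited in Step 2 of the proof of Proposition~\ref{prop:iden_alg2}.

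First I would record the effect of the proportionality assumption on the cumulants. Since $z_i' = \gamma z_i$ and the fourth cumulant is homogeneous of degree four under scaling, the coefficient $\lambda_i'$ of $\fa_i^{\otimes 4}$ in $\kappa_4(\bx)$ satisfies $\lambda_i' = \gamma^4 \lambda_i$, where $\lambda_i$ is the coefficient of $\fa_i^{\otimes 4}$ in $\kappa_4(\by)$. Hence $\kappa_4(\bx) = \gamma^4 \kappa_4(\by) + \sum_{j=1}^\ell \nu_j \fb_j^{\otimes 4}$, and, since each $z_i$ is non-Gaussian so that $\lambda_i \neq 0$ generically, we may solve $\gamma^4 = \lambda_i'/\lambda_i$. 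As $\gamma > 0$, the positive fourth root recovers $\gamma$ unambiguously, so it remains only to express $\lambda_i'$ through quantities computable from $\kappa_4(\bx)$.

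Next I would extract $\lambda_i'$ exactly as in the proof of Proposition~\ref{prop:iden_alg2}. Writing $M = \Mat(\kappa_4(\bx)) = \sum_{i=1}^r \lambda_i' \mathbf{A}_i^{\otimes 2} + \sum_{j=1}^\ell \nu_j \mathbf{B}_j^{\otimes 2}$, where $\mathbf{A}_i, \mathbf{B}_j \in \RR^{p^2}$ vectorize $\fa_i^{\otimes 2}$ and $\fb_j^{\otimes 2}$, I would apply Lemma~\ref{lem:unique_C} with both factor matrices equal to the column $\mathbf{A}_i$ and the scalar $C = \lambda_i'$. This shows that the value of $\lambda_i'$ for which $\rank(M - \lambda_i' \mathbf{A}_i \otimes \mathbf{A}_i) = \rank(M) - 1$ is unique and equals $(\mathbf{A}_i\T M^{-1} \mathbf{A}_i)^{-1}$; substituting the thin eigendecomposition $M = V D V\T$ yields $\lambda_i' = (\mathbf{A}_i\T V D^{-1} V\T \mathbf{A}_i)^{-1}$. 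The hypothesis enters precisely here: for generic $\fa_i, \fb_j$ with $r + \ell \leq {p+1 \choose 2}$, the matrix $\fa_i^{\otimes 2}$ avoids the span of the remaining $\fa^{\otimes 2}$ and $\fb^{\otimes 2}$, which forces the rank to drop by exactly one and makes the extraction valid.

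Finally I would assemble $\gamma = (\lambda_i'/\lambda_i)^{1/4} = (\frac{1}{\lambda_i}(\mathbf{A}_i\T V D^{-1} V\T \mathbf{A}_i)^{-1})^{1/4}$, where $\mathbf{A}_i$ is the vectorization of $\fa_i^{\otimes 2}$ denoted $\fa_i$ in the statement. Since this derivation is valid for every $i \in [r]$---with $r \neq 8$ ensuring that the rank-$r$ background decomposition $\kappa_4(\by) = \sum_i \lambda_i \fa_i^{\otimes 4}$ is itself identifiable by Lemma~\ref{lem:rankq}, so that each $\fa_i$ and $\lambda_i$ is well-defined---all indices return the same scalar, giving both the formula and its independence of $i$. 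I expect the main obstacle to lie not in the algebra but in the bookkeeping of the genericity hypotheses: verifying $\lambda_i \neq 0$, checking that the deflation in Lemma~\ref{lem:unique_C} yields a unit rank drop rather than a larger one, and confirming that $M$ is invertible on its column space so that the thin eigendecomposition legitimately supplies $M^{-1}$ in the closed form.
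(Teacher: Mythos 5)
Your proposal is correct and follows essentially the same route as the paper: both reduce the problem to $\lambda_i' = \gamma^4\lambda_i$ via the homogeneity of the fourth cumulant and then extract $\lambda_i'$ from $\Mat(\kappa_4(\bx))$ through the rank-drop characterization of Lemma~\ref{lem:unique_C} applied via the thin eigendecomposition $VDV\T$. The only cosmetic difference is that you apply the lemma one column $\mathbf{A}_i$ at a time (exactly as in the proof of Proposition~\ref{prop:iden_alg2}), whereas the paper applies it to the full matrix $A\in\RR^{p^2\times r}$ at once and reads the diagonal of $(A\T VD^{-1}V\T A)^{-1}$; your version lands on the stated closed form slightly more directly.
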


\begin{proof}
The flattenings of the cumulants $\kappa_4(\by)$ and $\kappa_4(\bx)$ are, respectively,
\[ M_\by := \sum_{i=1}^r \lambda_i \mathbf{A}_i^{\otimes 2}, \qquad 
M_\fx := \gamma^4 \left( \sum_{i=1}^r \lambda_i \mathbf{A}_i^{\otimes 2} \right) +\sum_{j=1}^\ell \nu_j \mathbf{B}_j^{\otimes 2},\] 
where $\mathbf{A}_i, \mathbf{B}_j \in \RR^{p^2}$ vectorize the matrices $\fa_i^{\otimes 2}$ and $\fb_i^{\otimes 2}$, respectively and we use that $\lambda_i' = \gamma^4 \lambda_i$.
We have $\rank M_\by=r$ and $\rank M_\bx= r+\ell$, by the assumptions in the statement.

Let \(A \in \RR^{p^2 \times r}\) have columns \(\mathbf{A}_1,\ldots,\mathbf{A}_r\) and define \(D'=\gamma^4\text{Diag}(\lambda_1,\ldots,\lambda_r)\). 
Then 
\(\rank (M_\bx-AD'A\T )=\rank (\sum_{j=1}^\ell \nu_j\mathbf{B}_j^{\otimes 2})=\ell\).  
Suppose that \(VD V\T\) is the thin eigendecomposition of \(M_\bx\).
We have 
\[V\T (M_\bx-AD'A\T )V=D-(V\T A)D'(V\T A)\T.\]
We have that \(\rank D=r+\ell\), the upper bound \(\rank (V\T A)D'(V\T A)\T =\rank V\T  M_\by V\leq r\), and finally that \(\rank (D-(V\T A)D'(V\T A)\T )= \rank (V\T (M_\bx-AD'A\T )V) \leq \ell\).
Hence
\[D'=(A\T V D^{-1}V\T A)^{-1},\]
by Lemma~\ref{lem:unique_C}.
Matrices \(A,\text{Diag}(\lambda_1,\ldots,\lambda_r), V,D \) can be recovered uniquely from tensor decomposition of \(\kappa_4(\by)\) and eigendecomposition of \(M_\bx\). So \(D' \) can be recovered uniquely. Hence \(\gamma\) is unique: it is \(\gamma^4 \lambda_i= (\fa_i\T V D^{-1} V\T \fa_i)^{-1}\) for any $i\in [r]$. 
\end{proof}

One can test proportionality by seeing whether the values 
$\left(\frac{1}{\lambda_i}(\fa_i\T VD^{-1} V\T \fa_i)^{-1} \right)^{\frac{1}{4}}$
from Theorem~\ref{thm: unique gamma} are approximately equal as $i$ varies. In practice, exact proportionality may not hold, and learning $\gamma$ via the above Theorem could be challenging. 
An alternative is to use a sweep of $\gamma$ values and choose $\gamma$ according to visualization plots, a similar method to that used in cPCA~\cite{abid2017contrastive}.
We implement the proportional cICA algorithm and report its performance in Section \ref{app:simulations_details}. 

\section{Practicalities and interpretation of cICA}
\label{app: practicalities}
In this section, we discuss the practicalities of cICA: preprocessing the input to speed up the algorithm and how to choose the ranks $r$ and $\ell$. We also discuss how to interpret coordinates when viewing cICA as a dimensionality reduction method.

\subsection{Choosing the ranks}
\label{app:r_and_l}

When computing the tensor decompositions in cICA,     a key step is to determine the ranks $r$ and~$\ell$. To choose the ranks, we can use the flattenings of the cumulants, the matrices $\Mat ( \kappa_4(\bx)), \Mat ( \kappa_4(\by)) \in \RR^{p^2 \times p^2}.$ 
If the expressions for the cumulant tensors $\kappa_4(\bx)$ and $\kappa_4(\by)$ in~\eqref{eqn:tensor_decomp} hold exactly, and if $r + \ell\leq {p+1\choose 2}$ and the vectors $\fa_i,\fb_j$ are generic, then
\[r = \text{rank}(\text{Mat}(\kappa_4(\by))) \quad \text{and} \quad r+\ell = \text{rank}(\text{Mat}(\kappa_4(\bx))).\]
For non-exact cumulants, such as sample cumulants, we do not work with the exact ranks of the flattening matrices, but instead examine plots of the eigenvalues in descending magnitude (see Appendix) to choose an appropriate cut-off.
We choose~$r$ such that the decrease of the eigenvalue plot of $\mathrm{Mat}(\kappa_4(\by))$ slows down, choose $q$ such that the decrease of the eigenvalue plot of $\mathrm{Mat}(\kappa_4(\bx))$ slows down, and calculate $\ell = q - r$.
The algorithm cICA has hyperparameters $r$ and $\ell$; proportional cICA has one hyperparameter $\ell$.

We discuss how the results may be affected by an incorrect choice of $r$ and $\ell$ and justify our way of ordering the foreground patterns $\fb_1, \ldots, \fb_\ell$ in~\eqref{eqn:kb}.
Let the true ranks be $r$ and $\ell$ and assume that we have used $r'$ and $\ell'$ in the input to Algorithm~\ref{alg:generic a b}. 
\begin{itemize}
    \item If $\ell'>\ell$, then $\ell' - \ell$ foreground patterns are noise.
\item   If $\ell'<\ell$, then $\ell - \ell'$ foreground patterns are not recovered.
\item       If $r'<r$, then background patterns are mixed with foreground patterns, as follows. Assuming without loss of generality that we have recovered $\fa_1, \ldots, \fa_{r'}$, the third step of Algorithm~\ref{alg:generic a b} decomposes the tensor $\sum_{i=r' + 1}^{r} \lambda_{i}'\fa_{i}^{\otimes 4} +\sum_{j=1}^{\ell} \nu_j\fb_j^{\otimes 4}$ via HTD, as in Algorithm~\ref{alg:hierarchical}. If the orthogonality hypotheses of Proposition~\ref{prop:orthogonal_case} hold, then the recovered foreground patterns are recovered together with some background patterns that are incorrectly interpreted as foreground patterns.
If the approximate orthogonality hypotheses of Theorem~\ref{thm:nearly_orthogonal} hold, then the foreground patterns are recovered approximately, together with background patterns that are classed as foreground patterns. 
Without an orthogonality condition, the recovered foreground patterns $\fb_1,\ldots,\fb_\ell$ will be polluted but still roughly collinear to the true foreground patterns for small $r-r'$  or when the dimension of the dataset is large, resulting in almost orthogonality between random vectors.
\item  If $r'>r$, then foreground patterns are mixed with background noise, as follows. Some background patterns from  Algorithm~\ref{alg:generic a b} will be noise, say $\fa_{r+1}',\ldots,\fa_{r'}'$. Step 2 of Algorithm~\ref{alg:generic a b} computes the coefficients of the tensors $(\fa_{r+1}')^{\otimes 4},\ldots,(\fa_{r'}')^{\otimes 4}$ in $\kappa_4(\bx)$, though they are not true rank one components of $\kappa_4(\bx)$. In Step 3, the tensor to be decomposed has the form $\sum_{i=1}^{r'-r} \mu_i(\fa_{r+i}')
^{\otimes 4} +\sum_{i=1}^{\ell} \nu_i\fb_i^{\otimes 4}$ for some $\mu_1,\ldots,\mu_{r'-r}\in \RR$.
As in the case $r' < r$, the foreground patterns can still be exactly or approximately recovered, under the hypotheses of Proposition~\ref{prop:orthogonal_case} and Theorem~\ref{thm:nearly_orthogonal} respectively, albeit with some background noise recovered as foreground patterns. \end{itemize}

The above discussion shows that when $r' \neq r$, the vectors $\fb_1, \ldots, \fb_\ell$ obtained from Algorithm~\ref{alg:generic a b} could represent foreground patterns, background patterns, or noise. We order the vectors according to~\eqref{eqn:kb}. 
The denominator of~\eqref{eqn:kb} is the variance of the linearly transformed 
background dataset $Y\fb$. The numerator is that of the transformed dataset $ X\fb$. 
Their ratio
enables us to select the most relevant foreground patterns, as follows. 
      \begin{itemize}
        \item If $\fb$ is a foreground pattern, we expect $\fb\T  \kappa_2(\by)\fb $ to be small relative to $\fb\T  \kappa_2(\bx) \fb$, hence a large $k(\fb)$.
        \item If $\fb$ is a background pattern, we  expect $\fb\T  \kappa_2(\by)\fb \approx \alpha\fb\T  \kappa_2(\bx) \fb$ for some constant $\alpha$ and hence $k(\fb) \approx\alpha$. 
        \item If $\fb$ is foreground noise, we  expect  a small $\fb\T  \kappa_2(\bx)\fb$, hence small $k(\fb)$.
        \item If $\fb$ is background noise, we  expect a small $\fb\T  \kappa_2(\by)\fb$, hence a large $k(\fb)$. To prevent the background noise from showing up in the recovered foreground pattern, we require $r'\leq r$.
    \end{itemize}
    In practice, we consider those patterns for which $k(\fb)$ exceeds a certain threshold or take the patterns with the two highest values of $k(\fb)$.

 \subsection{Visualization}
\label{app:visualization}

 We discuss how to interpret coordinates when using cICA for dimensionality reduction. The following proposition relates the projections $\fb_i^T\bx$ for $i\in [\ell]$ to the latent  variables~$s_i$. 
 
\begin{proposition}\label{prop:visualization s_i}
Consider the cICA model in \eqref{eqn:cica1}. Suppose $\|\fb_i\|=1$ for $i\in [\ell]$. Assume that 
 for some small \( \epsilon > 0 \) that \( |\langle \fb_i, \fb_j \rangle| < \epsilon\) and \(|\langle \fb_i, \fa_k \rangle| < \epsilon\) for \( i\neq j \in [\ell]\), \(k \in [r]
 \). 
Then, for each \( i \in [\ell] \), 
\[|s_i-\fb_i^T\bx| = (r C_{\bz'}+ (\ell - 1) C_\mathbf{s})O(\epsilon),\]
 where \( C_{\bz'} \) and \( C_\mathbf{s} \) are upper bounds on the magnitudes of  random variables in $\bz'$ and $\mathbf{s}$.
 In particular, \( \fb_i^T\bx\) approximates the component \( s_i \) with an error linear in \( \epsilon \).
\end{proposition}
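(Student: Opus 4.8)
The plan is to directly substitute the cICA model expression for $\bx$ into the inner product $\fb_i^\top \bx$ and peel off the single term that matches $s_i$. Recall that $\bx = A\bz' + B\bs = \sum_{k=1}^r z_k' \fa_k + \sum_{j=1}^\ell s_j \fb_j$. Taking the inner product with the unit vector $\fb_i$ gives
\[
\fb_i^\top \bx = \sum_{k=1}^r z_k' \langle \fb_i, \fa_k \rangle + \sum_{j=1}^\ell s_j \langle \fb_i, \fb_j \rangle.
\]
First I would isolate the term $j = i$ in the second sum, which contributes $s_i \langle \fb_i, \fb_i \rangle = s_i$ since $\|\fb_i\| = 1$. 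Everything else is an "error" to be bounded.

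Next I would form the difference $s_i - \fb_i^\top \bx$ and apply the triangle inequality to the remaining terms:
\[
|s_i - \fb_i^\top \bx| \le \sum_{k=1}^r |z_k'|\,|\langle \fb_i, \fa_k \rangle| + \sum_{j \neq i} |s_j|\,|\langle \fb_i, \fb_j \rangle|.
\]
Here I would invoke the near-orthogonality hypotheses $|\langle \fb_i, \fa_k \rangle| < \epsilon$ and $|\langle \fb_i, \fb_j \rangle| < \epsilon$, together with the stated magnitude bounds $|z_k'| \le C_{\bz'}$ and $|s_j| \le C_\bs$. Substituting yields the bound $(r C_{\bz'} + (\ell-1) C_\bs)\epsilon$, matching the claimed $(r C_{\bz'} + (\ell - 1)C_\bs)\mathcal{O}(\epsilon)$. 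The count $\ell - 1$ arises precisely because the $j = i$ term has been removed from the sum.

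This argument is essentially a one-line computation dressed up with triangle inequalities, so I do not anticipate a genuine obstacle. The only subtlety worth flagging is the interpretation of the magnitude bounds $C_{\bz'}$ and $C_\bs$: the random variables $z_k'$ and $s_j$ are not literally bounded in general, so I would read these as almost-sure or high-probability bounds (or bounds that hold on the support), and phrase the conclusion accordingly — the inequality $|s_i - \fb_i^\top \bx| \le (r C_{\bz'} + (\ell-1)C_\bs)\epsilon$ holds pointwise wherever $|z_k'| \le C_{\bz'}$ and $|s_j| \le C_\bs$. I would make sure the statement "$\fb_i^\top \bx$ approximates $s_i$ with error linear in $\epsilon$" is justified by treating $r, \ell, C_{\bz'}, C_\bs$ as constants, so that the whole coefficient is absorbed into the $\mathcal{O}(\epsilon)$ as written.
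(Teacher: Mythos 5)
Your proof is correct and is essentially identical to the paper's own argument: expand $\fb_i^\top\bx = \sum_k \langle \fb_i,\fa_k\rangle z_k' + \sum_{j\ne i}\langle\fb_i,\fb_j\rangle s_j + s_i$ using $\|\fb_i\|=1$, then bound the remainder by the triangle inequality and the near-orthogonality hypotheses. Your added caveat about reading $C_{\bz'}$ and $C_\bs$ as bounds on the support is a reasonable clarification but does not change the argument.
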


\begin{proof}
Recall from~\eqref{eqn:cica1} that $\bx = A \bz' + B \mathbf{s}$. Hence
\begin{align*}
   \fb_i^T \bx &= (\fb_i^T A) \bz' + (\fb_i^T B) \mathbf{s} \\
   &= \sum_{k=1}^r \langle \fb_i, \fa_k \rangle z'_k + \sum_{j=1,j\neq i}^\ell \langle \fb_i, \fb_j \rangle s_j + s_i.
\end{align*}
The almost orthogonality conditions of the proposition then imply that
\begin{align*}
| s_i-\fb_i^T \bx|  & \leq  \sum_{k=1}^r |\langle \fb_i, \fa_k \rangle| |z'_k| +  \sum_{j=1}^\ell |\langle \fb_i, \fb_j \rangle| |s_j| \\ &\leq   (rC_{\bz'}  + (\ell-1)C_\mathbf{s})\epsilon. \qedhere
\end{align*}

\end{proof}
 
The almost orthogonality conditions in Proposition \ref{prop:visualization s_i} are strong requirements.  However, they can be relaxed -- 
if \( |\langle \fb_i, \fb_j \rangle| < \epsilon\) for chosen $i,j\in [\ell]$ and sources $s_i$ and $s_j$  have wider variance than $(\fb_i\T A)\bz'$ and $(\fb_j\T A)\bz'$, then plotting $\fb_i\T X$ against  $\fb_j\T X$ still approximates the plot of $s_i$ against $s_j$.

If \((\fb_i\T A) \mathbf{z}'\) and \((\fb_j\T A) \mathbf{z}'\) are uncorrelated, we expect the plot of \(X\fb_i\) against \(X\fb_j\) to show axis-aligned clusters; otherwise, clusters may not be axis-aligned. We specify the condition for \((\fb_i\T A) \mathbf{z}'\) and \((\fb_j\T A) \bz'\) to be uncorrelated, assuming that all variables in the tuple \(\bz'\) have the same variance.

\begin{proposition}
Consider the cICA model in~\eqref{eqn:cica1}. Suppose that the independent variables $\mathbf{z'}$ is a tuple of independent random variables with the same variance. Then  $(\fb_i\T A) \mathbf{z}'$ and $(\fb_j\T A) \mathbf{z}'$ are uncorrelated if and only if $ \langle \fb_i\T A, \fb_j\T A \rangle = 0$.
\end{proposition}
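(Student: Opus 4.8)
The plan is to reduce the statement to a one-line covariance computation, expanding each linear combination in the coordinates of $\mathbf{z}'$ and using independence to annihilate the cross terms. Writing the row vectors $\mathbf{u} = \fb_i\T A$ and $\mathbf{w} = \fb_j\T A$ in $\RR^{1 \times r}$, I would first record that $(\fb_i\T A)\mathbf{z}' = \sum_{k=1}^r u_k z_k'$ and $(\fb_j\T A)\mathbf{z}' = \sum_{l=1}^r w_l z_l'$, and note (as is implicit in the model) that each $z_k'$ has finite second moment so that the relevant covariances are defined.

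Next I would compute the covariance by bilinearity:
\[
\Cov\!\left( \sum_{k=1}^r u_k z_k', \ \sum_{l=1}^r w_l z_l' \right) = \sum_{k=1}^r \sum_{l=1}^r u_k w_l \, \Cov(z_k', z_l').
\]
Independence of the entries of $\mathbf{z}'$ gives $\Cov(z_k', z_l') = 0$ for $k \neq l$, so only the diagonal terms survive; and the equal-variance hypothesis makes $\Cov(z_k', z_k') = \sigma^2$ a constant independent of $k$. Hence
\[
\Cov\!\left( (\fb_i\T A)\mathbf{z}', \ (\fb_j\T A)\mathbf{z}' \right) = \sigma^2 \sum_{k=1}^r u_k w_k = \sigma^2 \, \langle \fb_i\T A, \fb_j\T A \rangle.
\]

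Finally, two random variables are uncorrelated exactly when their covariance is zero, so I would conclude by dividing out the common variance: assuming $\sigma^2 \neq 0$, the covariance vanishes if and only if $\langle \fb_i\T A, \fb_j\T A \rangle = 0$, which is the claim. The computation itself is immediate, so there is no real obstacle; the only point requiring care is the degenerate case $\sigma^2 = 0$, in which the sources are deterministic and every pair is trivially uncorrelated, breaking the reverse implication. I would dispatch this by taking the standing assumption that the latent sources are genuinely random with nonzero variance, which is consistent with the non-Gaussianity assumptions already imposed throughout the cICA model.
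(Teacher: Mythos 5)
Your proof is correct and is essentially the same as the paper's: expand the covariance by bilinearity, use independence to discard the off-diagonal terms, and factor out the common variance to reduce the claim to the vanishing of $\langle \fb_i\T A, \fb_j\T A\rangle$. Your explicit handling of the degenerate case $\sigma^2 = 0$ is a small point of extra care that the paper leaves implicit, but it does not change the argument.
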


\begin{proof}
Write $\mathbf{u}=\fb_i\T A$ and  $\mathbf{v}=\fb_j\T A$. By the bilinearity of the covariance 
\begin{align*}
    \mathrm{Cov}(\mathbf{u} \mathbf{z}',\mathbf{v} \mathbf{z}') =& \sum_{1\leq i,j\leq r} u_iv_j\mathrm{Cov}(z'_i,z'_j)\\
    =&\sum_{1\leq i\leq r} u_iv_i\mathrm{Var}(z'_i)\\
    =&\mathrm{Var}(z'_1)\sum_{1\leq i\leq r} u_iv_i.
\end{align*}
The last expression is zero if and only if $\langle \mathbf{u}, \mathbf{v}\rangle =0$. \qedhere
\end{proof}

\section{Details of numerical experiments}
\label{app:simulations_details}
All experiments are run on an Apple M2 Pro with 16 GB memory. Each run of each algorithm takes at most 1 minute.

\subsection{Choices of Methods in Algorithm \ref{alg:generic a b}}\label{app: algorithm choice}
We describe the details of the synthetic data setup in Section~\ref{sec:SPM HTD best}.
Our setup involves a background dataset of three independent uniform random variables and a foreground dataset with five sources: three uniform random variables and two mixtures of beta distributions $0.5 B(2,5) + 0.5 B(5,4)$.
The foreground mixing matrix $B\in \RR^{5\times 2}$ consists of the last two columns of the identity matrix $I_5$. 
The background mixing matrix $A\in \RR^{5\times 3}$ is 
$$
\begin{pmatrix}
 0.74280923 &  0.91366784&  0.52707773\\
 -0.61857537&  0.32868577&  0.83815881\\
 0.23109269 & -0.2120887 & -0.08650875\\
 -0.0153426 &  0.07115626& -0.07315634\\
 0.10936053 & 0.08445063 &  0.08272407
\end{pmatrix}.
$$
We show in Figure~\ref{fig:four clusters} of the main text that projecting the foreground dataset using the matrix 
$B$ reveals four distinct clusters and we illustrate the performance of our algorithm SPM-HTD and the variants SPM-SPM, HTD-HTD. 
Here, we report the performance of other combinations of tensor decompositions methods, ICA methods and HTD in Figure \ref{fig:other methds four clusters}. 
Only the two methods JADE-HTD and FastICA-HTD find the four clusters in the foreground dataset.

\begin{figure}[htbp]
    \centering
    \includegraphics[width=1\linewidth]{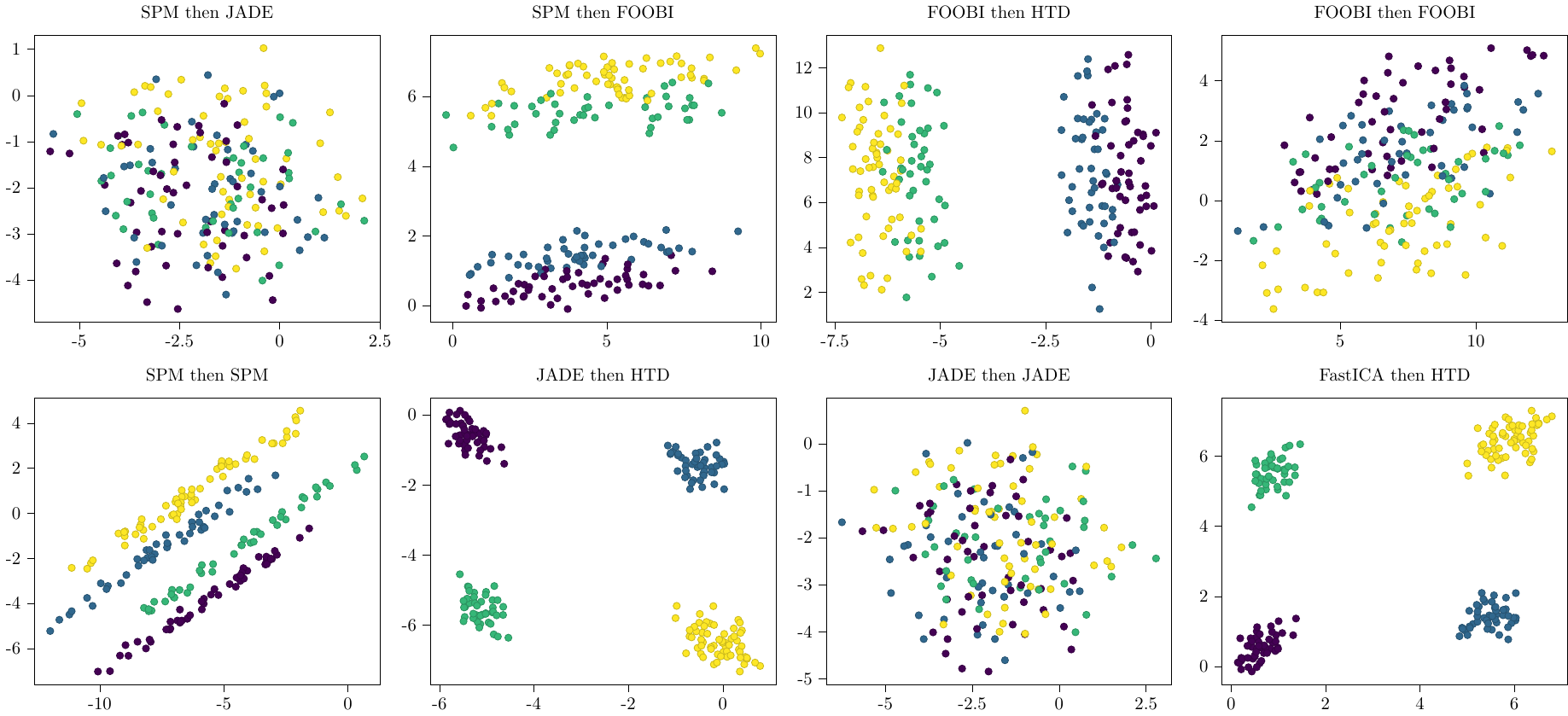}
    \caption{The performance of SPM-JADE, SPM-FOOBI, FOOBI-HTD, FOOBI-FOOBIM, SPM-SPM, JADE-HTD, JADE-JADE and FastICA-HTD on synthetic data. Only JADE-HTD and FastICA-HTD find the four clusters in the foreground dataset.}
    \label{fig:other methds four clusters}
\end{figure}

To demonstrate the necessity of our proposed three-step decomposition (Algorithm~\ref{alg:generic a b}) instead of separately decomposing the foreground and background tensors, we introduce a comparison method called SPM-SPM-Separate. Here, SPM is applied separately to the foreground and background cumulant tensors. The resulting patterns are matched using cosine similarity to identify the foreground patterns.

We vary the sample size of both datasets from 100 to 1000. 
For each sample size, we repeat the experiment 20 times by randomly drawing datasets, applying all eleven methods to estimate the matrix $B$, and computing the silhouette score on the foreground data projected via the estimated $B$. A higher silhouette score indicates that the estimated matrix $B$ accurately recovers the four clusters. To mitigate randomness, we record the best silhouette score from 20 independent runs for each method and then average these best scores across experiments.
Apart from the methods in Figure 3, we also report the performance of the method in 
Figure~\ref{fig:SPM-HTD with separate}. 
The method, SPM-SPM-Separate yields the lowest scores. This confirms the need to use the three-step decomposition procedure described in Algorithm~\ref{alg:generic a b} over separate foreground and background tensor decompositions.

\begin{figure}[htbp]
    \centering
    \includegraphics[scale = 0.6]{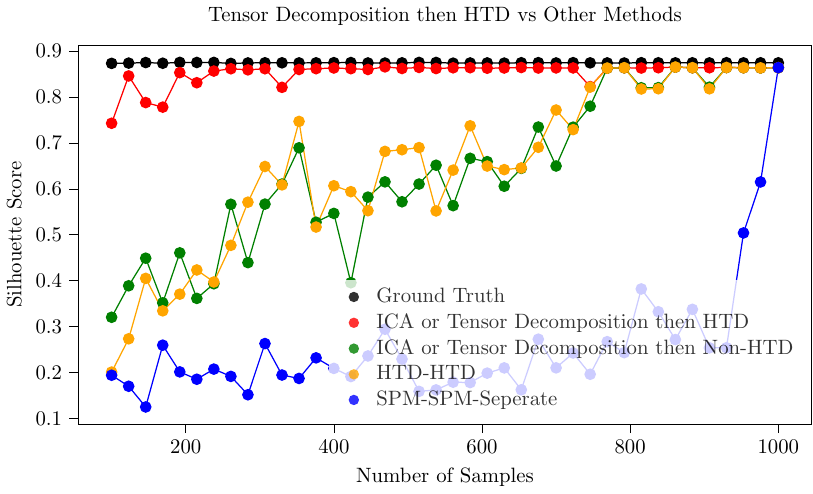}
    \caption{
    We study the accuracy of different approaches to cICA as the number of samples varies.
    We compare methods using ICA or tensor decomposition followed by HTD against  HTD-HTD, methods using ICA or tensor decomposition methods followed by non-HTD alternatives, and SPM-SPM-Separate, in which SPM is applied separately to the foreground and background datasets. 
    Performance is evaluated using the silhouette score, which measures how effectively the estimated matrix $B$ recovers the four clusters shown in the top-right plot of Figure~\ref{fig:four clusters}.
The SPM-SPM-Separate method performs worst among all methods, emphasizing the importance of employing the three-step decomposition procedure in Algorithm~\ref{alg:generic a b}. Methods using ICA or tensor decomposition followed by HTD consistently outperform both ICA or tensor decomposition methods followed by non-HTD approaches, and the HTD-HTD combination. These results justify our decision to use SPM in Step 1 and HTD in Step 3 of our algorithm.
}
    \label{fig:SPM-HTD with separate}
\end{figure}

\color{black}
\subsection{Salient patterns}
\subsubsection{Synthetic data}\label{app: synthetic}

We describe the details of the synthetic data setup in Section~\ref{sec:simulated_patterns} that produced Figure~\ref{fig:general setting}.
We consider $p\in [4,12]$.
Our samples come from the distributions~\eqref{eqn:cica1}, where matrices $A \in \RR^{p \times p}$ and $B \in \RR^{p \times (p-1)}$ are random with unit vector columns, and the columns of $B$ are assumed to be orthogonal. We assume the orthogonality of the columns of $B$ to facilitate comparison with the methods cPCA and PCPCA, which require this assumption. 

For testing Algorithm~\ref{alg:generic a b} in Figure~\ref{fig:general setting}(a) and (b) in the main text,
variables $s_i$ are exponential distributions $\exp(\theta_i)$ where $\theta_i=2$ when $i$ is odd and $\theta_i=1.5$ when $i$ is even. Variables $z_i$ and $z'_i$ are exponential distributions  $\exp(\nu_i),\exp(\nu_i')$ where $\nu_i=2,\nu_i'=1$ when $i$ is odd and $\nu_i=1,\nu_i'=2$ when $i$ is even. We generate $10^5$ data points for both the foreground and background data and apply cICA to the sample cumulant tensors.
cICA has randomness due to the subspace power method. We apply our algorithm 100 times and get 100 recovered foreground mixings $B \in \RR^{p \times (p-1)}$.

We also test Algorithm~\ref{alg:z z' prop} here. The result is shown in Figure~\ref{fig:proportional setting}.

\begin{figure}[htb]
\centering
\subfigure[]{\tikzset{every picture/.style={font=\large}}
\begin{tikzpicture}[scale = 0.7]
\begin{axis}
[
legend cell align={left},
legend style={
 font = \small,
  fill opacity=0.8,
  draw opacity=1,
  text opacity=1,
  at={(0.03,0.03)},
  anchor=south west,
  draw=none
},
tick align=outside,
tick pos=left,
x grid style={darkgray176},
xlabel={p},
xmin=3.6, xmax=12.4,
xtick style={color=black},
xtick={2,4,6,8,10,12,14},
xticklabels={
  2,
  4,
  6,
  8,
  10,
  12,
  14
},
y grid style={darkgray176},
ylabel={cosine similarity},
ymin=0.549149037158856, ymax=1.0185209519212,
ytick style={color=black},
ytick={0.5,0.6,0.7,0.8,0.9,1,1.1},
yticklabels={
  0.5,
  0.6,
  0.7,
  0.8,
  0.9,
  1.0,
  1.1
}
]
\addplot [semithick, purple]
table {%
4 0.996099546722979
5 0.995319781634145
6 0.987693247860524
7 0.99380434011233
8 0.99718586488655
9 0.982379409100363
10 0.985596530986762
11 0.984486815136689
12 0.995172164499091
};
\addlegendentry{prop cICA max}
\addplot [semithick, blue]
table {%
4 0.897363172668478
5 0.785905056923316
6 0.760757299099082
7 0.643504948979757
8 0.603974074747416
9 0.655231285202517
10 0.570484124193508
11 0.657110840524977
12 0.626601210918819
};
\addlegendentry{PCPCA max}
\addplot [semithick, green]
table {%
4 0.939750516463874
5 0.859410492637685
6 0.909004816158987
7 0.75718277982045
8 0.848546794537172
9 0.781108347246622
10 0.736913646623597
11 0.790277289790488
12 0.697952315190582
};
\addlegendentry{cPCA max}
\end{axis}
\end{tikzpicture}}
\subfigure[]{\tikzset{every picture/.style={font=\large}}
\begin{tikzpicture}[scale = 0.7]

\begin{axis}[
legend cell align={left},
legend style={font = \small,fill opacity=0.8, draw opacity=1, text opacity=1, at={(0.03,0.03)}, anchor=south west, draw=none},
tick align=outside,
tick pos=left,
x grid style={darkgray176},
xlabel={p},
xmin=3.6, xmax=12.4,
xtick style={color=black},
xtick={2,4,6,8,10,12,14},
xticklabels={
  2,
  4,
  6,
  8,
  10,
  12,
  14
},
y grid style={darkgray176},
ylabel={relative Frobenius error},
ymin=0.0324309053628954, ymax=0.969430550966571,
ytick style={color=black},
ytick={0,0.2,0.4,0.6,0.8,1},
yticklabels={
  0.0,
  0.2,
  0.4,
  0.6,
  0.8,
  1.0
}
]
\addplot [semithick, purple]
table {%
4 0.0883227408657753
5 0.0967493500324941
6 0.156886915576006
7 0.111316305074053
8 0.0750217983448807
9 0.187726348175408
10 0.169726067610357
11 0.17614303768989
12 0.0982632739217318
};
\addlegendentry{prop cICA min}
\addplot [semithick, blue]
table {%
4 0.453071357142607
5 0.654362197986228
6 0.691726392298166
7 0.844387412293958
8 0.889972949310915
9 0.830383904946962
10 0.926839657984585
11 0.828117334047565
12 0.864174506776474
};
\addlegendentry{PCPCA min}
\addplot [semithick, green]
table {%
4 0.347129611344599
5 0.530263156107069
6 0.426603290753863
7 0.696874766625324
8 0.550369340466615
9 0.661651951940562
10 0.725377630446933
11 0.647646061069644
12 0.777235723329053
};
\addlegendentry{cPCA min}
\end{axis}

\end{tikzpicture}}
\caption{
The similarity of the recovered vs. true foreground patterns (i.e. the accuracy of recovering matrix \(B\)), measured via 
cosine similarity in (a) 
and relative Frobenius error in (b)
. 
The \(x\)-axis is the number of variables \(p\), which ranges from 4 to 12.
For cPCA and PCPCA, we 
test 100 hyperparameter values and plot the one with the lowest error. }
\label{fig:proportional setting}
\end{figure}

We let $z_i,z'_i$ be exponential distributions  $\exp(\nu_i),\exp(\nu_i')$ where $\nu_i=\nu_i'=1$. 
We learn the hyperparameter $\gamma'$ 
via Theorem~\ref{thm: unique gamma} of the Appendix.
The true $\gamma'$ is 1 and the recovered $\gamma'$ are all in the range $[0.94,1.08]$.

We describe the details of our comparison. For cPCA~\cite{abid2017contrastive}, we test 100 log-evenly spaced hyperparameters $\alpha$ between 0 and 1000 with $p-1$ components. Each run returns a matrix of size $p\times (p-1)$, whose columns are contrastive principal components with norm 1.
For PCPCA, we test 100 evenly spaced hyperparameters $\gamma$ between 0 and 0.9 and fix $p-1$ components. Each run returns a matrix of size $p\times (p-1)$. We normalize the columns to unit norm, to compare PCPCA with the other algorithms.

Since the columns of $B$ that are recovered are only unique up to permutation and sign, we describe how to align the outputs. 
Let \(B' \in \RR^{p \times (p-1)}\) be a recovered matrix.
Rather than searching over all ways to match the columns of \(B\) to those of \(B'\), we use a greedy algorithm to approximate the matching, as follows. 
We fix the first column of \(B\), denoted \(\fb_1\). 
We choose one of the columns of \(B'\) whose cosine similarity with \(\fb_1\) has the largest absolute value. 
We set this to be the first column of \(B'\), changing its sign if the cosine similarity is negative.
Then we select among the remaining columns, the one with the largest absolute cosine similarity with \(\fb_2\) and set this as the second column of \(B'\)(again, changing the sign if the cosine similarity is negative). 
We continue until we reach the last column. Then we compute the relative Frobenius error and mean cosine similarity which are, respectively,
\[
\sqrt{\sum_{i=1}^p\sum_{j=1}^{p-1}(b_{ij}-b_{ij}')^2/(p-1)}
\qquad \text{and} \qquad 
\frac{1}{p-1}\sum_{i=1}^{p-1} \langle \fb_i, \fb_i' \rangle.\]

\subsubsection{Corrupted MNIST dataset with continuous strength}
\label{app:corrupted MNIST continuous}
For the hyperparameters of cICA, we choose the number of components to be 30, which explains \(85\%\) of the variance. 
We then choose \(r,\ell\) for cICA and \(\ell\) for proportional cICA.
We order the eigenvalues of \(\mathrm{Mat}(\kappa_4(\by))\) and \(\mathrm{Mat}(\kappa_4(\bx))\) according to their absolute values and plot parts of the ordered eigenvalues in Figure \ref{fig:mnist_r_l_continuous}. Based on these plots, we choose \(r=65\) and \(r+\ell=130\).

\begin{figure}[htbp]
    \centering
    \scalebox{.8}{
   \subfigure[]{
\begin{tikzpicture}[scale = 0.7]

\definecolor{darkgray176}{RGB}{176,176,176}
\definecolor{steelblue31119180}{RGB}{31,119,180}

\begin{axis}[
tick align=outside,
tick pos=left,
x grid style={darkgray176},
xlabel={n-th eigenvalue},
xmin=48.55, xmax=80.45,
xtick style={color=black},
xtick={45,50,55,60,65,70,75,80,85},
xticklabels={
45,50,55,60,65,70,75,80,85
},
y grid style={darkgray176},
ymin=0.269238423719678, ymax=0.713843798701155,
ytick style={color=black},
ytick={0.25,0.3,0.35,0.4,0.45,0.5,0.55,0.6,0.65,0.7,0.75},
yticklabels={
0.25,0.3,0.35,0.4,0.45,0.5,0.55,0.6,0.65,0.7,0.75
}
]
\addplot [semithick, steelblue31119180]
table {%
50 0.693634463474725
51 0.689335246540913
52 0.663502558035099
53 0.636285523458932
54 0.634662200853531
55 0.605686613222113
56 0.59021351874416
57 0.588702931854172
58 0.574212903056048
59 0.542309233441365
60 0.509771607621794
61 0.50666054006316
62 0.471176539105032
63 0.469979897378173
64 0.469674888980654
65 0.436973494876533
66 0.429726509899752
67 0.415177272734636
68 0.401055518575199
69 0.390969101100651
70 0.379114860100457
71 0.367529638275477
72 0.363532967923284
73 0.350155475373931
74 0.341212815677229
75 0.330319350472526
76 0.322696787957564
77 0.299870445315847
78 0.293287778469812
79 0.289447758946109
};
\end{axis}

\end{tikzpicture}}
    \subfigure[]{
\begin{tikzpicture}[scale = 0.7]

\definecolor{darkgray176}{RGB}{176,176,176}
\definecolor{steelblue31119180}{RGB}{31,119,180}

\begin{axis}[
tick align=outside,
tick pos=left,
x grid style={darkgray176},
xlabel={n-th eigenvalue},
xmin=97.55, xmax=151.45,
xtick style={color=black},
xtick={90,100,110,120,130,140,150,160},
xticklabels={
90,100,110,120,130,140,150,160
},
y grid style={darkgray176},
ymin=0.787688835756546, ymax=1.53895043591135,
ytick style={color=black},
ytick={0.7,0.8,0.9,1,1.1,1.2,1.3,1.4,1.5,1.6},
yticklabels={
0.7,0.8,0.9,1,1.1,1.2,1.3,1.4,1.5,1.6
}
]
\addplot [semithick, steelblue31119180]
table {%
100 1.50480218135886
101 1.48190430023463
102 1.45740542649241
103 1.43889462194469
104 1.40022866420784
105 1.38731650667569
106 1.38133723949079
107 1.36040317035024
108 1.35258349779583
109 1.34300734570978
110 1.34258508939491
111 1.32926359220611
112 1.3221750467757
113 1.29164722151098
114 1.23714689770608
115 1.23199513437619
116 1.20510598462259
117 1.19762800962202
118 1.15682204121555
119 1.15634098232709
120 1.14742699786038
121 1.12572676297742
122 1.11484368244516
123 1.11387409956478
124 1.09913900770943
125 1.09328761511024
126 1.07111333242213
127 1.03212974616621
128 1.02940484214573
129 1.01644597244257
130 1.01305708935222
131 1.00299212223676
132 0.99624065195762
133 0.993533730368712
134 0.953512938337384
135 0.950814998960117
136 0.943630215046441
137 0.943410274825998
138 0.929393459336709
139 0.91154279743744
140 0.893669225142683
141 0.885124520039848
142 0.879252523989554
143 0.868070859394346
144 0.866285028704615
145 0.84968438229824
146 0.843168659141304
147 0.834023795053422
148 0.825587783109639
149 0.821837090309037
};
\end{axis}

\end{tikzpicture}}}
    \caption{Absolute values of eigenvalues of $\mathrm{Mat}(\kappa_4(\by))$ (left) and $\mathrm{Mat}(\kappa_4(\bx))$ (right).}
    \label{fig:mnist_r_l_continuous}
\end{figure}

We fix the random seed to be 0 for cICA. 
We check that the absolute values of the foreground-to-background cumulant ratios for the background patterns $\fa_1, \ldots, \fa_r$ range from $7.2\times 10^{-3}$ to $91$.

For cPCA, we run the experiment for $\alpha=1$.
We run PCPCA for $\gamma'=0.9$.

\subsubsection{Human and monkey gene expression data}
\label{app:monkey_and_human}

We describe the patterns obtained from the comparison of human and monkey gene expression in Section~\ref{sec:human_and_monkey}. The selected 15 highest variance genes among the 139 selected genes in~\cite{suresh2023comparative} are EIF3K, NDUFA13, SARNP, MYL10, TAF9, PRCD, BBS5, MRPS14, RING1, AGPAT5, FLOT1, BTBD7, MASTL, KANK1, BDP1. The 15 highest variance genes among the remaining $3244 = 3383 - 139$ genes are LUC7L3, RBKS, RBM7, AP4S1, CLCN1, CLASP1, ADTRP, CNNM3, NDUFAF7, CNIH4, RPUSD2, NELFCD, RPP14, ROMO1, RNF181. 

For cICA, we fix the random seed to be 0.
We use the plots of the eigenvalues of the flattenings of $\kappa_4(\by),\kappa_4(\bx)$ to choose $r=22$ and $\ell=46-22=24$.
The absolute values of the foreground-to-background cumulant ratios for the background patterns $\fa_1, \ldots, \fa_r$ range from $4.6\times 10^{-2}$ to $55$. Hence the shared gene patterns between human and monkey have different strength across the two datasets.

The top two foreground patterns are:
\begin{align*}
\fb_1\T = [& -0.04 , -0.041, -0.09 , -0.051, -0.12 ,  0.075,  0.01 , -0.004,
        0.002,  0.007,\\
& -0.07 , -0.061,  0.95 ,  0.192, -0.009, -0.007,
       -0.002, -0.001, -0.076, -0.042,\\
& -0.008, -0.04 ,  0.005, -0.058,
        0.012, -0.012, -0.05 , -0.006, -0.046, -0.005] \\
\fb_2\T = [&0.615, -0.166,  0.185,  0.119,  0.113, -0.099, -0.118,  0.011,
        0.045, -0.025,\\
&0.098,  0.141, -0.482, -0.339,  0.054,  0.028,
       -0.005,  0.03 ,  0.247, -0.017,\\
&-0.031,  0.043,  0.012,  0.043,
        0.015,  0.04 ,  0.025,  0.002,  0.236, -0.016],
\end{align*}
where the coordinates are labeled by the 30 genes in the order listed above.
The 15 genes with the largest absolute values of the top foreground pattern include 10 genes among the 139 selected in \cite{suresh2023comparative}.
The 15 genes with the largest absolute values of the second foreground pattern include 13 genes from \cite{suresh2023comparative}.
Therefore, the foreground patterns obtained via cICA demonstrate consistency with the finding in~\cite{suresh2023comparative} that this subset of 139 genes captures human-specific information.

For ICA, we run HTD for $r=46$ and rank the patterns according to 
\eqref{eqn:kb}.
We denote by $(\fb_1<15)$ (resp. $ (\fb_2<15)$) the number of genes in the top 15 with largest absolute value in $\fb_1$ that are among the 139 selected genes. 

We run cPCA for 100 $\alpha$ between 0 to 1000 and choose $\alpha$ that achieves the highest value of $(\fb_1<15) + (\fb_2<15)$. The highest value is obtained at $\alpha= 0.17$. Note that our parameters for proportional cICA are square of the cPCA parameters, since if $\mathbf{z}=\lambda \mathbf{z}'$, then $\kappa_2(\bz)=\lambda^2 \kappa_2(\bz')$ and $\kappa_4(\bz)=\lambda^4 \kappa_4(\bz')$.
We run PCPCA for 100 evenly spaced $\gamma'$ values between 0 and 0.9.
The best score of $(\fb_1<15) + (\fb_2<15)$ is obtained for~$\gamma'=0$.

We also run the algorithm for 100 log-evenly spaced $\gamma$ between 0 and $10^6$ and choose $\gamma$ to achieve the highest value of $(\fb_1<15) + (\fb_2<15)$. The highest score is achieved at $\gamma=0.03$. 
We observe
that the 15 genes with the highest absolute values in $\fb_1$ (resp. $\fb_2$) have 10 (resp. 13) genes among the 15 selected genes that come from the subset of 139 in~\cite{suresh2023comparative}. 
The number of misclassified genes is 6.

\subsection{Dimensionality reduction}
\subsubsection{Mouse protein data}
\label{app:mouse}

There are 270 foreground
samples. These are the protein expression in the cortex of mice subjected to shock therapy. Of these samples, 135 have Down syndrome and 135 do not. There are 135 background samples, protein expression measurements from mice without
Down Syndrome who did not receive shock therapy. Each sample measures the expression of 77 proteins; that is, $p = 77$.

For cICA, we preprocess using PCA as described in Section \ref{sec:ranking_bs}. We take $k=15$ components, which explain $90\%$ of the variance. 
We then choose $r$ and $\ell$, as described in Appendix section \ref{app:r_and_l}.
That is, we compute the eigenvalues of $\mathrm{Mat}(\kappa_4(\by))$ and $\mathrm{Mat}(\kappa_4(\bx))$, ranking the eigenvalues by magnitude, see Figure \ref{fig:mice_r_l}. Based on these plots, we choose $r=27$ and $\ell=53 - 27 = 26$.

\begin{figure}[htbp]
\centering
\scalebox{.8}{
\subfigure[]{\tikzset{every picture/.style={font=\large}}
\begin{tikzpicture}[scale = 0.7]

\definecolor{darkgray176}{RGB}{176,176,176}
\definecolor{steelblue31119180}{RGB}{31,119,180}

\begin{axis}[
tick align=outside,
tick pos=left,
x grid style={darkgray176},
xlabel={n-th eigenvalue},
xmin=7.55, xmax=61.45,
xtick style={color=black},
xtick={10,15,20,25,30,35,40,45,50,55},
xticklabels={
  10,
  15,
  20,
  25,
  30,
  35,
  40,
  45,
  50,
  55
},
y grid style={darkgray176},
ylabel={eigenvalue},
ymin=-3.19633546007519, ymax=136.205310558293,
ytick style={color=black},
ytick={-20,0,20,40,60,80,100,120,140},
yticklabels={
  -20,
  0,
  20,
  40,
  60,
  80,
  100,
  120,
  140
}
]
\addplot [semithick, steelblue31119180]
table {%
10 129.868872102912
11 119.685422451593
12 119.205960559256
13 108.532777989007
14 100.614495564858
15 98.1824756646108
16 88.5416603483296
17 84.6909801098414
18 79.0035571034665
19 73.0749424516109
20 60.6763199151065
21 60.0059625606454
22 53.6404052406842
23 47.1378829264854
24 46.6997022819183
25 37.2785855873179
26 37.0343768398301
27 36.2284576693594
28 26.138615726947
29 24.6096085026816
30 23.5324196012496
31 20.253086576999
32 18.3822867472698
33 18.2208943427281
34 16.2898846408265
35 13.4234189063779
36 13.2682540798257
37 11.753703024432
38 11.4659319072035
39 10.4492822267937
40 10.2229120735662
41 9.69960260046346
42 9.22208913634273
43 7.84099563215642
44 7.83935769891669
45 7.10095312703929
46 6.60708577998992
47 6.56332729686067
48 6.21287015954825
49 5.68606908051499
50 5.50632579248541
51 5.23963328006148
52 4.78815481170698
53 4.18144432902929
54 3.90732876785127
55 3.57820266469067
56 3.5586730011551
57 3.34391507880113
58 3.24054070640747
59 3.14010299530517
};
\end{axis}

\end{tikzpicture}} 
\subfigure[]{\tikzset{every picture/.style={font=\large}}
\begin{tikzpicture}[scale = 0.7]

\definecolor{darkgray176}{RGB}{176,176,176}
\definecolor{steelblue31119180}{RGB}{31,119,180}

\begin{axis}[
tick align=outside,
tick pos=left,
x grid style={darkgray176},
xlabel={n-th eigenvalue},
xmin=17.05, xmax=81.95,
xtick style={color=black},
xtick={20,25,30,35,40,45,50,55,60,65,70,75,80},
xticklabels={
  20,
  25,
  30,
  35,
  40,
  45,
  50,
  55,
  60,
  65,
  70,
  75,
  80
},
y grid style={darkgray176},
ylabel={eigenvalue},
ymin=0.994712438245972, ymax=43.5890246135928,
ytick style={color=black},
ytick={0,5,10,15,20,25,30,35,40,45},
yticklabels={
  0,
  5,
  10,
  15,
  20,
  25,
  30,
  35,
  40,
  45
}
]
\addplot [semithick, steelblue31119180]
table {%
20 41.6529195147134
21 39.9477192314057
22 39.2128042493993
23 34.9331553838275
24 34.7467542405345
25 30.4722282043233
26 29.5736829200129
27 27.4777201432907
28 27.0565437634937
29 27.053762916787
30 25.2828897708403
31 23.7000163367084
32 23.5178740109914
33 22.3159740224102
34 21.1518017479663
35 20.1955894875519
36 18.6986931520109
37 18.3276011199393
38 18.0211669760116
39 17.4014380786991
40 16.1373323998697
41 15.3294374493943
42 14.309518240356
43 13.9916819852592
44 13.3206585329395
45 13.1792025747786
46 13.0208528470059
47 11.8537241310223
48 11.6943451476852
49 11.5971773320585
50 11.5331581149752
51 10.6576458475061
52 10.552982125453
53 9.53774045277387
54 9.18760039309507
55 8.80180839150916
56 8.29493516252332
57 8.1132324814536
58 8.0999445232012
59 7.35160657913834
60 6.92298415680288
61 6.84674341356349
62 6.53172364122439
63 6.30080093841373
64 5.88888246621785
65 5.66143011072967
66 5.5573985741066
67 5.02196915307468
68 4.95034957361574
69 4.93903981582922
70 4.74822150203524
71 4.67557965719651
72 4.22359898342084
73 4.10810124655497
74 3.78380463172212
75 3.43225080874869
76 3.19420394754959
77 3.14523180583364
78 3.09746803327391
79 2.93081753712537
};
\end{axis}

\end{tikzpicture}}}
    \caption{Absolute values of eigenvalues of  \(\mathrm{Mat}(\kappa_4(\by))\) (left) and \(\mathrm{Mat}(\kappa_4(\bx))\) (right).}
    \label{fig:mice_r_l}
\end{figure}

For cICA, we fix the random seed to be 0.
For proportional cICA, we run the algorithm for 100 log-evenly spaced $\gamma$ between 0 and $10^6$. The highest silhouette score is obtained at $\gamma=0$, equivalent to running ICA.

We run cPCA for 100 $\alpha$ between 0 to 1000. These are the default values of $\alpha$ in the code of \cite{abid2017contrastive}. 
We plotted the choice with the highest silhouette score, which was achieved for $\alpha=26.2$. 

We run PCPCA for 100 evenly spaced $\gamma'$ values between 0 and $0.9 \cdot \frac{270}{135}$. 270 and 135 are the number of samples in the foreground and background datasets, respectively. Such choices of $\gamma'$ are in accordance with the setup in \cite{li2020probabilistic} and are sufficient to find the highest silhouette score.
The best score was obtained when $\gamma'=0.9 \cdot \frac{270}{135}$. In \cite{li2020probabilistic}, the authors take a further step to scale the probabilistic contrastive principal components, before calculating the silhouette score. The silhouette score obtained after this additional step is 0.450.

\subsubsection{Corrupted MNIST data with discrete strength}
\label{app: mixed mnist}

For the hyperparameters of cICA, we choose the number of components to be 30, which explains \(85\%\) of the variance. 
We then choose \(r,\ell\) for cICA and \(\ell\) for proportional cICA.
We order the eigenvalues of \(\mathrm{Mat}(\kappa_4(\by))\) and \(\mathrm{Mat}(\kappa_4(\bx))\) according to their absolute values and plot parts of the ordered eigenvalues in Figure \ref{fig:mnist_r_l_discrete}. Based on these plots, we choose \(r=51\) and \(r+\ell=192\).
The absolute values of the foreground-to-background cumulant ratios for the background patterns $\fa_1, \ldots, \fa_r$ range from $6.7 \times 10^{-3}$ to 16.

\begin{figure}[htbp]
    \centering
    \scalebox{.8}{
   \subfigure[]{
\begin{tikzpicture}[scale = 0.7]

\definecolor{darkgray176}{RGB}{176,176,176}
\definecolor{steelblue31119180}{RGB}{31,119,180}

\begin{axis}[
tick align=outside,
tick pos=left,
x grid style={darkgray176},
xlabel={n-th eigenvalue},
xmin=38.55, xmax=70.45,
xtick style={color=black},
xtick={35,40,45,50,55,60,65,70,75},
xticklabels={
35,40,45,50,55,60,65,70,75
},
y grid style={darkgray176},
ymin=0.500274799876602, ymax=1.33503850590372,
ytick style={color=black},
ytick={0.5,0.6,0.7,0.8,0.9,1,1.1,1.2,1.3,1.4},
yticklabels={
0.5,0.6,0.7,0.8,0.9,1,1.1,1.2,1.3,1.4
}
]
\addplot [semithick, steelblue31119180]
table {%
40 1.29709470108431
41 1.27190276735686
42 1.22744438237163
43 1.14264533044589
44 1.12980931074168
45 1.12782382192294
46 1.06685922277909
47 1.05039026467306
48 1.05000725908844
49 1.0147149740153
50 1.00186270270793
51 0.943579943881485
52 0.902130360305609
53 0.883869049557847
54 0.843228686248399
55 0.83204532870177
56 0.826824362195603
57 0.803355725031787
58 0.75658734989
59 0.732125065606922
60 0.700926302849758
61 0.683267926734056
62 0.669730697454329
63 0.639912240497276
64 0.623913585486058
65 0.618489357269592
66 0.614514611132317
67 0.56301785377124
68 0.544270342638197
69 0.538218604696016
};
\end{axis}

\end{tikzpicture}}
    \subfigure[]{\begin{tikzpicture}[scale = 0.7]

\definecolor{darkgray176}{RGB}{176,176,176}
\definecolor{steelblue31119180}{RGB}{31,119,180}

\begin{axis}[
tick align=outside,
tick pos=left,
x grid style={darkgray176},
xlabel={n-th eigenvalue},
xmin=167.55, xmax=221.45,
xtick style={color=black},
xtick={160,170,180,190,200,210,220,230},
xticklabels={
160,170,180,190,200,210,220,230
},
y grid style={darkgray176},
ymin=0.0386808217620798, ymax=0.0794592513683697,
ytick style={color=black},
ytick={0.035,0.04,0.045,0.05,0.055,0.06,0.065,0.07,0.075,0.08},
yticklabels={
  $0.035$,
  $0.040$,
  $0.045$,
  $0.050$,
  $0.055$,
  $0.060$,
  $0.065$,
  $0.070$,
  $0.075$,
  $0.080$
},
y label style={/pgf/number format/.cd, fixed, precision=3},
scaled ticks=false
]
\addplot [semithick, steelblue31119180]
table {%
170 0.0776056863862656
171 0.0769588711464868
172 0.0751873948962694
173 0.0739984142869499
174 0.0724907789487785
175 0.0723843745392608
176 0.0706215830579442
177 0.0688414986655231
178 0.0679777676493524
179 0.067782125189526
180 0.066433889660879
181 0.0654292714967938
182 0.0640964393842078
183 0.0636290393687166
184 0.0635640163225082
185 0.0630286076606247
186 0.0625951334242943
187 0.0617199103599632
188 0.060916260344184
189 0.0600659471464946
190 0.0589055403675772
191 0.0578504449792008
192 0.0572689172984808
193 0.0554593366154507
194 0.0546940786463131
195 0.0540289009928411
196 0.0539566637181563
197 0.053868798371876
198 0.0528043917494368
199 0.0523315888832203
200 0.0512929584787813
201 0.0508935223073031
202 0.0504973247586787
203 0.0498216723090145
204 0.0491793730381064
205 0.0485703971210181
206 0.0479513634848642
207 0.0472927352766839
208 0.0468145047935972
209 0.0460838579740941
210 0.0458739764289029
211 0.0450417941790789
212 0.0447941633625538
213 0.043672762337757
214 0.0434515725051748
215 0.0427692167074713
216 0.041984405688642
217 0.0417869720257349
218 0.0409568910651864
219 0.0405343867441839
};
\end{axis}

\end{tikzpicture}}}
    \caption{Absolute values of eigenvalues of $\mathrm{Mat}(\kappa_4(\by))$ (left) and $\mathrm{Mat}(\kappa_4(\bx))$ (right).}
    \label{fig:mnist_r_l_discrete}
\end{figure}

We fix the random seed to be 0 for cICA. 
For cPCA, we run experiments for 100 $\alpha$ values between 0 and 1000 and choose $\alpha=6.6$ that achieves the highest silhouette score when plotting the mixed images of digits 0 and 1 using their inner product with the first two patterns.
We run PCPCA for 100 evenly spaced $\gamma'$ between 0 and $0.9$ and choose the $\gamma'=0.9$ with the highest silhouette score when plotting with the first two patterns.
We also include ICA with $r=192$ to illustrate that cICA performs better than ICA.

\section{Additional numerical experiment}
\subsection{Single cell RNA data}
\label{app: RNA}
We study the single-cell RNA sequencing data from \cite{zheng2017massively}.
The foreground data points are gene expressions of bone marrow mononuclear cells
from patients with acute myeloid leukemia before and after they received a stem-cell transplant; the background dataset contains gene expression measurements of healthy people.
The foreground dataset includes 7525 pre-transplant patients and 4874 post-transplant patients, while the background dataset consists of 4457 healthy patients. 
Each sample contains gene expression measurements of bone marrow mononuclear cells. 
We preprocess the data by log-transforming
and subsetting to the 500 most variable genes, in accordance with previous analyses on these data \cite{zheng2017massively,abid2018exploring,li2020probabilistic}.

For cICA, the absolute values of the foreground-to-background cumulant ratios for the background patterns $\fa_1, \ldots, \fa_r$ range from $1.5\times 10^{-4}$ to 564.
The projection plots of cICA, proportional cICA, cPCA, and PCPCA are shown in Figure~\ref{fig:RNA_data}.
The method cPCA has the highest silhouette score (0.451), followed by proportional cICA (0.402), then cICA (0.344), then PCPCA (0.164). 
We also run ICA to the foreground dataset and it has silhouette score 0.202 for comparison with cICA.

\begin{figure}[htbp]
    \centering
    \subfigure[]{\includegraphics[height=0.24\textwidth]{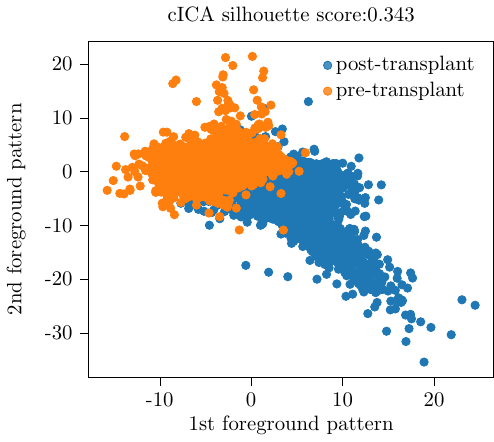}} 
    \subfigure[]{\includegraphics[height=0.24\textwidth]{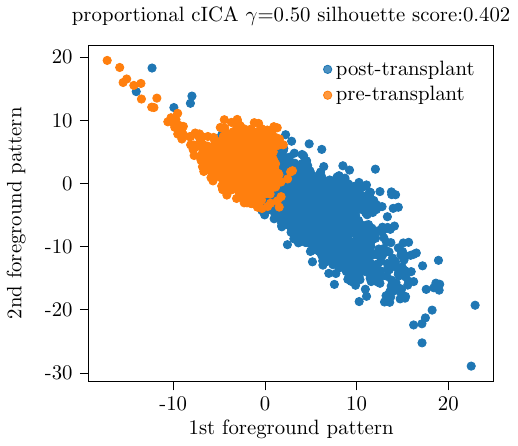}} 
    \subfigure[]{\includegraphics[height=0.24\textwidth]{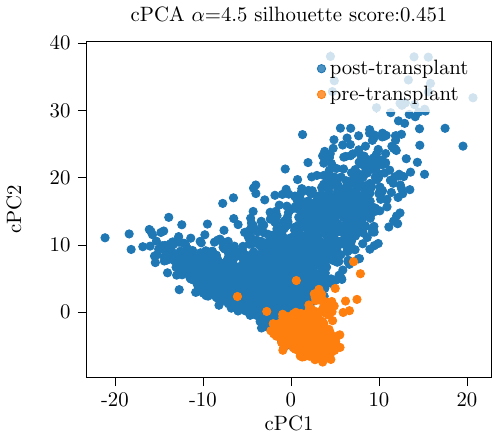}}
    \subfigure[]{\includegraphics[height=0.24\textwidth]{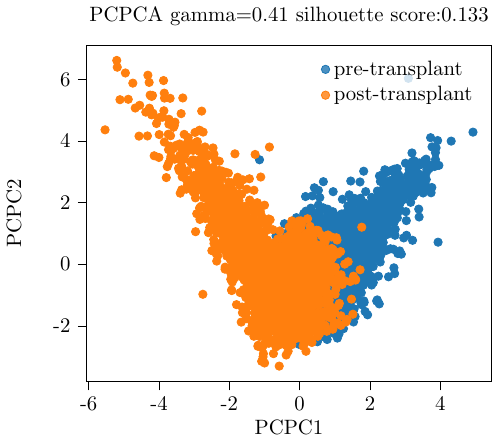}}   
    \subfigure[]{\includegraphics[height = 0.24\textwidth]{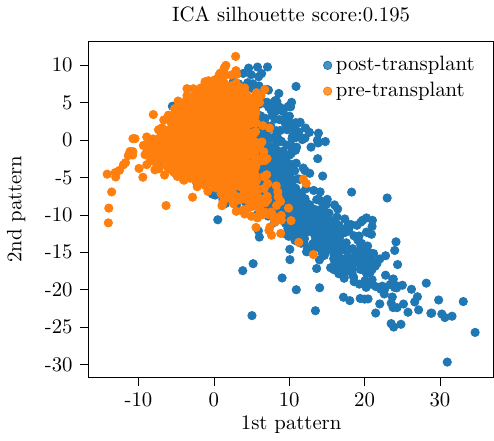}}
    \caption{Dimensionality reduction of the single-cell RNA sequencing data from \cite{zheng2017massively} via (a) cICA (b) proportional cICA (c) cPCA (d) PCPCA  (e) ICA.
    }
    \label{fig:RNA_data}
\end{figure}

\begin{figure}[htb]
    \centering
    \scalebox{.8}{
   \subfigure[]{
\begin{tikzpicture}[scale = 0.7]

\definecolor{darkgray176}{RGB}{176,176,176}
\definecolor{steelblue31119180}{RGB}{31,119,180}

\begin{axis}[
tick align=outside,
tick pos=left,
x grid style={darkgray176},
xlabel={n-th eigenvalue},
xmin=22.3, xmax=81.7,
xtick style={color=black},
xtick={20,30,40,50,60,70,80,90},
xticklabels={
20,30,40,50,60,70,80,90
},
y grid style={darkgray176},
ylabel={eigenvalue},
ymin=87.7315189951688, ymax=833.667379829479,
ytick style={color=black},
ytick={0,100,200,300,400,500,600,700,800,900},
yticklabels={
0,100,200,300,400,500,600,700,800,900
}
]
\addplot [semithick, steelblue31119180]
table {%
25 799.761204337011
26 764.268471730912
27 714.619589386712
28 675.715619225732
29 631.843013698751
30 608.86148082137
31 596.389732195454
32 582.348032700566
33 540.19299180409
34 499.177820610146
35 488.966333617294
36 477.85441894312
37 476.217758600017
38 462.038093815415
39 411.548210701254
40 405.094521821508
41 395.143216647047
42 361.931379206016
43 356.256969410272
44 353.83441934431
45 335.040849380667
46 325.914812208222
47 312.678322257301
48 299.702130790883
49 285.795276660655
50 271.147298928302
51 270.902839075263
52 261.209111376185
53 238.213335620135
54 229.334104035753
55 219.64609373883
56 211.162506983521
57 201.7450053419
58 200.250842782597
59 191.801992417359
60 191.443973827909
61 187.62187893368
62 180.976637698309
63 175.628568828996
64 169.227705637958
65 168.641823727773
66 162.605366668115
67 160.479836474987
68 156.64397923922
69 149.343630969031
70 146.683393933418
71 143.617111617534
72 142.080306070846
73 132.656174717612
74 129.379954245533
75 129.377922963905
76 128.098215038597
77 124.228258259144
78 123.191442613622
79 121.637694487637
};
\end{axis}

\end{tikzpicture}}
    \subfigure[]{
\begin{tikzpicture}[scale = 0.7]

\definecolor{darkgray176}{RGB}{176,176,176}
\definecolor{steelblue31119180}{RGB}{31,119,180}

\begin{axis}[
tick align=outside,
tick pos=left,
x grid style={darkgray176},
xlabel={n-th eigenvalue},
xmin=76.55, xmax=152.45,
xtick style={color=black},
xtick={70,80,90,100,110,120,130,140,150,160},
xticklabels={
70,80,90,100,110,120,130,140,150,160
},
y grid style={darkgray176},
ylabel={eigenvalue},
ymin=28.5982920809002, ymax=122.291606160984,
ytick style={color=black},
ytick={20,40,60,80,100,120,140},
yticklabels={
20,40,60,80,100,120,140
}
]
\addplot [semithick, steelblue31119180]
table {%
80 118.032819157344
81 115.296493656536
82 113.909978341503
83 111.171094885252
84 110.056849259444
85 109.376168091937
86 104.577020244089
87 97.8477559960069
88 97.8314532303654
89 94.3038086521535
90 93.8099953469551
91 90.2042646999485
92 88.4992514116509
93 84.0590313951597
94 83.4128483363937
95 80.3065233726855
96 80.0851536414283
97 78.988358646715
98 75.8783181982746
99 74.836274950834
100 74.0072439323678
101 70.0553973968392
102 69.3259249301069
103 68.2640144037623
104 67.1960194229575
105 65.489008869992
106 64.1358429745527
107 62.6071244930506
108 61.3064946866605
109 59.7583044230865
110 59.1664751051214
111 58.9915187211129
112 57.7832416064667
113 57.5644165742315
114 56.5465282288082
115 56.0995616533109
116 54.5963850001366
117 54.5341553547965
118 53.624436031126
119 52.0801336132572
120 51.8444309181169
121 51.2419827627316
122 50.53313066602
123 49.9797087568786
124 48.9485972071973
125 48.1571829965411
126 47.2949601372363
127 46.5598598355755
128 45.6933760203441
129 45.5726260564228
130 44.295800500771
131 44.2573358059269
132 43.5739716803956
133 42.9926411683412
134 42.7126937696804
135 42.1412426567514
136 40.7966611302435
137 40.1106261569583
138 39.9992827902904
139 39.0894195802271
140 38.2797861881583
141 37.707420084873
142 37.3577089275035
143 36.694334185097
144 35.6339330834403
145 34.9582846808736
146 34.8864614668042
147 34.1744751245028
148 33.6364712624924
149 32.8570790845404
};
\end{axis}

\end{tikzpicture}}}
    \caption{Absolute values of eigenvalues of  \(\mathrm{Mat}(\kappa_4(\by))\) (left) and \(\mathrm{Mat}(\kappa_4(\bx))\) (right).
    }
    \label{fig:RNA_r_l}
\end{figure}

For the hyperparameters of cICA and proportional cICA, we choose the number of components to be 30 which explains $54.5\%$ of the variance. 
We then choose $r,\ell$ for cICA and $\ell$ for proportional cICA.
We order the eigenvalues of $\mathrm{Mat}(\kappa_4(\by))$ and $\mathrm{Mat}(\kappa_4(\bx))$ according to their absolute values and plot out parts of the ranked eigenvalues in Figure \ref{fig:RNA_r_l}. We choose $r=53$ and $r+\ell=116$.

We fix random seed 0 for cICA and ICA. For ICA, we run the HTD algorithm for $r=116$.
For proportional cICA, we run the algorithm for 100 log-evenly spaces $\gamma$ between 0 and $10^{6}$. The highest silhouette score is 0.402, obtained when $\gamma=0.50$.

For cPCA, we plot the first two cPCA components. As above, we run 
cPCA using 100 $\alpha$ between 0 to 1000, 
the default values from \cite{abid2017contrastive}. 
The highest silhouette score is 0.457, obtained when $\alpha=3.5$. 
We run PCPCA for 100 evenly spaced $\gamma'$ between 0 and $0.9\cdot \frac{12399}{4457}$, in accordance with~\cite{li2020probabilistic}.
The numbers 12399 and 4457 are the sample sizes of the foreground and background datasets, respectively.
In accordance with the experiment in \cite{abid2017contrastive}, we run PCPCA with 4 components. The best silhouette score over any $\gamma'$ and any pair of probabilistic contrastive principal components is 0.164, obtained when $\gamma'=0.41$ using the third and fourth components.
If we normalize the probabilistic contrastive principal components and then calculate the silhouette score, the score is 0.184. 
There are three reasons why the silhouette score for cICA methods is worse than that of cPCA. 
\begin{enumerate}
    \item Due to the computational cost of forming large tensors, cICA methods is applied to the PCA transformed dataset using the top 30 principal components, which explain only 54.5$\%$ of the variance. The clustering quality is expected to be worse than when applied to the complete dataset.
    \item Our cICA methods return patterns that only exist in the foreground while cPCA learns patterns that are more prominent in the foreground than in the background.
    \item The patterns learned by cICA do not have any relation while cPCA returns perfectly orthogonal patterns. The patterns from cICA may enjoy better intrepretability but produce suboptimal plots than cPCA.
\end{enumerate}

To illustrate these arguments, we generate plots using cPCA and cICA as follows.
We apply cPCA to the PCA transformed dataset using the top 30 principal components.
The plot obtained using the top two cPCA components is shown in Figure~\ref{fig: RNA_comparison}(a). The silhouette score achieved is 0.434. 
For cICA, we apply proportional cICA to the PCA transformed dataset using the same hyperparameters as above. We select the top foreground pattern $\fb$ and the top background pattern $\fa$ ranked according to \eqref{eqn:kb}. We then use $\fb, \frac{\fa-\langle \fa, \fb \rangle \fb}{\| \fa-\langle \fa, \fb \rangle \fb \|}$as directions to plot the data. The plot is shown in \ref{fig: RNA_comparison}(b). The silhouette score obtained is 0.428, almost the same as that of cPCA.

\begin{figure}[htb]
\centering 
\subfigure[]{\includegraphics[height=0.24\textwidth]{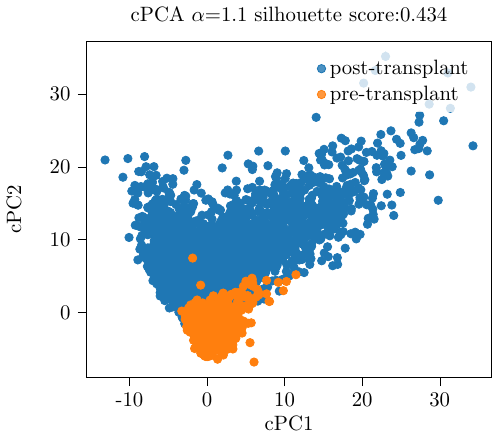}} 
\subfigure[]{\includegraphics[height = 0.24\textwidth]{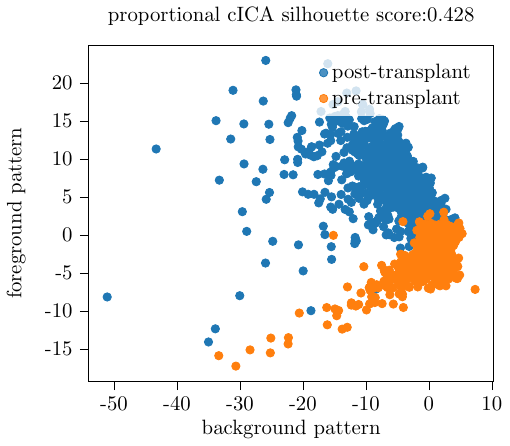}}
\caption{
(a) cPCA on the top 30 PCA components
(b) Proportional cICA plot projected to the top foreground and the top background pattern.}
\label{fig: RNA_comparison}
\end{figure}

\end{document}